\documentclass[11pt]{article}  

\usepackage{amssymb,amsmath,amsthm}
\usepackage[alphabetic]{amsrefs}
\usepackage[
  colorlinks=true,
  linkcolor=blue,
  citecolor=blue,
  urlcolor=blue
]{hyperref}

\usepackage{booktabs}
\usepackage{multirow}

\usepackage[a4paper, left=20mm, right=20mm, top=30mm, bottom=25mm]{geometry}

\newtheorem{theo}{Theorem}[section]
\newtheorem{theoremalpha}{Theorem} 

\newtheorem{lemm}[theo]{Lemma}
\newtheorem{coro}[theo]{Corollary}
\newtheorem{prop}[theo]{Proposition}

\numberwithin{equation}{section}

\theoremstyle{definition}
\newtheorem{defi}[theo]{Definition}
\newtheorem{rema}[theo]{Remark}

\newtheorem*{MainTheoremA}{Theorem \ref{theo:inequality_for_Killing_number}}
\newtheorem*{MainTheoremB}{Theorem \ref{theo:RKS_on_Sasakian_eta_Einstein}}

\newcommand{\C}{\mathbb{C}}
\newcommand{\R}{\mathbb{R}}

\DeclareMathOperator{\ad}{\mathrm{ad}}
\DeclareMathOperator{\Ric}{\mathrm{Ric}}
\DeclareMathOperator{\scal}{\mathrm{Scal}}
\DeclareMathOperator{\Id}{\mathrm{Id}}
\DeclareMathOperator{\tr}{\mathrm{tr}}
\DeclareMathOperator{\bw}{{\textstyle\bigwedge}}

\newcommand{\Spin}{\mathrm{Spin}}
\newcommand{\SL}{\mathrm{SL}}
\newcommand{\SO}{\mathrm{SO}}
\newcommand{\SU}{\mathrm{SU}}
\newcommand{\ISO}{\mathrm{ISO}}

\newcommand{\g}{\mathfrak{g}}
\renewcommand{\sl}{\mathfrak{sl}}
\newcommand{\su}{\mathfrak{su}}
\newcommand{\so}{\mathfrak{so}}
\newcommand{\iso}{\mathfrak{iso}}
\newcommand{\heis}{\mathfrak{heis}}

\title{\textbf{Generalized Killing spinors \\ associated with the Ricci tensor}}
\date{\today}
\author{Natsuki Imada}

\begin{document}
\maketitle

\begin{abstract}
  In this paper, we introduce the notion of Ricci Killing spinors on Riemannian spin manifolds, which form a class between generalized Killing spinors and standard Killing spinors.
  We establish a rigidity result for Ricci Killing spinors and characterize simply connected Sasakian $\eta$-Einstein spin manifolds admitting Ricci Killing spinors.
  We also give a complete classification of Ricci Killing spinors on simply connected $3$-dimensional unimodular Lie groups with left-invariant Riemannian metrics.
  In particular, we obtain new examples of generalized Killing spinors which are not Killing spinors.
  Finally, for the non-Killing Ricci Killing spinors obtained in the Lie group case, we explicitly determine the corresponding Ricci-flat ambient manifolds carrying parallel spinors.
\end{abstract}

\noindent \textbf{Keywords: }{Generalized Killing spinors; Sasakian manifolds; Lie groups; parallel spinors.}\\
\noindent \textbf{2020 Mathematics Subject Classification:} {Primary 53C27; Secondary 53C25, 53C30.}


\section{Introduction}

Killing spinors are special sections of the spinor bundle on a spin manifold that satisfy a certain differential equation.
They have been extensively studied in differential geometry and mathematical physics due to their connections with special holonomy, supersymmetry, and Einstein metrics.
In this paper, we introduce a new class of spinors called \textit{Ricci Killing spinors}, which generalize the notion of Killing spinors by incorporating the Ricci curvature of the manifold.

A generalized Killing spinor or Cauchy spinor on a spin manifold $(M,g)$ is a non-zero spinor field $\psi \in \Gamma(\Sigma M)$ that satisfies the equation
\[\nabla_X \psi = A(X) \cdot \psi \quad \text{for all } X \in TM,\]
where $A$ is a symmetric endomorphism of the tangent bundle $TM$ and $\cdot$ denotes the Clifford multiplication.
When $A$ is a constant multiple of the identity map, $\psi$ is called a Killing spinor \cite{BFGK}.

Generalized Killing spinors are strongly related to hypersurfaces in Riemannian manifolds admitting parallel spinors.
If a Riemannian spin manifold $\tilde{M}$ admits a parallel spinor $\tilde{\psi}$, then for any oriented hypersurface $M \subset \tilde{M}$, $\tilde{\psi}|_M$ is a generalized Killing spinor with respect to $A = \frac{1}{2}W$, where $W$ is the Weingarten tensor of $M$ \cite{BHMMM}.
The converse is partially true. Suppose that a Riemannian spin manifold $(M,g)$ admits a generalized Killing spinor $\psi$ with a symmetric endomorphism $A$.
If either $A$ is a Codazzi tensor \cite{BGM05}, or both $g$ and $A$ are real-analytic \cite{AMM13}, then $M$ can be isometrically immersed as a hypersurface in a Riemannian spin manifold $\tilde{M}$ admitting a parallel spinor $\tilde{\psi}$ such that the Weingarten tensor is $W = 2A$ and $\psi = \tilde{\psi}|_M$.
In particular, if $\psi$ is a real Killing spinor, $\tilde{M}$ can be chosen as the Riemannian cone over $M$.
This is exactly B\"ar's cone construction \cite{Bar93}.

In the search for examples of generalized Killing spinors that are not Killing spinors, several important results have been obtained.
For $3$- and $7$-dimensional spheres, explicit examples of such generalized Killing spinors were given in \cite{MS14b} (see also \cite{MS14c}).
Furthermore, every generalized Killing spinor on a standard sphere of dimension $2$, $4$, $5$, or $8$ is a Killing spinor \cites{MS14a,MS14b}.
The properties of generalized Killing spinors on the $3$-dimensional sphere were further investigated in detail in \cites{MS14a,FM22}.
However, despite these efforts, a complete classification of generalized Killing spinors remains an open problem even on the $3$-dimensional sphere.
The existence and construction of generalized Killing spinors have also been studied on other specific geometric structures.
For instance, they were investigated on $7$-dimensional $3$-Sasakian manifolds \cite{AF10} and, more recently, on $3$-dimensional Lie groups \cite{Art25}.

In the present paper, we introduce the notion of Ricci Killing spinors as a special class of generalized Killing spinors where the endomorphism $A$ is related to the Ricci curvature of the manifold.
Specifically, we define a Ricci Killing spinor as a non-zero spinor field $\psi$ that satisfies
\[\nabla_X \psi = \mu\Ric(X) \cdot \psi \quad \text{for all } X \in TM,\]
for some non-zero constant $\mu \in \R\setminus\{0\}$.
Here, $\Ric$ is the Ricci tensor viewed as an endomorphism of the tangent bundle.
The class of Ricci Killing spinors includes that of Killing spinors.
Then a natural question arises: \textit{under what conditions does a Ricci Killing spinor reduce to a Killing spinor?}
In this paper, we first answer this question by establishing a geometric inequality relating the constant $\mu$ and the scalar curvature. Specifically, we prove the following theorem:

\begin{MainTheoremA}
Let $(M, g)$ be an $n$-dimensional Riemannian spin manifold ($n \geq 2$) admitting a Ricci Killing spinor with a constant $\mu$. 
Then the scalar curvature $\scal$ is constant.
If $\scal = 0$ then $(M, g)$ is Ricci-flat and the Ricci Killing spinor is parallel. If $\scal \neq 0$ then the following inequality holds:
\[\mu^2 \geq \frac{n}{4(n-1)\scal}.\]
Equality holds if and only if $(M, g)$ is Einstein, namely, the Ricci Killing spinor is a Killing spinor.
\end{MainTheoremA}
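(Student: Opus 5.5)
The plan is to use the standard integrability identity for generalized Killing spinors with $A=\mu\Ric$. Computing the curvature of $\nabla$ on $\psi$ gives
\[
R(X,Y)\psi=\mu\bigl((\nabla_X\Ric)Y-(\nabla_Y\Ric)X\bigr)\cdot\psi+\mu^2\bigl(\Ric(Y)\cdot\Ric(X)-\Ric(X)\cdot\Ric(Y)\bigr)\cdot\psi .
\]
Contracting via $\sum_i e_i\cdot R(e_i,X)\psi=-\tfrac12\Ric(X)\cdot\psi$ yields an identity of the form
\[
-\tfrac12\Ric(X)\cdot\psi=\mu\sum_i e_i\cdot\bigl((\nabla_{e_i}\Ric)X-(\nabla_X\Ric)e_i\bigr)\cdot\psi+\mu^2\,(\cdots)\cdot\psi .
\]
Here the $\mu^2$ term expands, using the Clifford relations and the symmetry of $\Ric$, into $2\mu^2\Ric^2(X)\cdot\psi-2\mu^2\scal\,\Ric(X)\cdot\psi$ (up to sign conventions, which I will fix carefully). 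The $\mu$ term involves $\sum_i e_i\cdot(\nabla_{e_i}\Ric)X$ and $d\scal(X)$, using $\delta\Ric=-\tfrac12 d\scal$.

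\textbf{Constancy of $\scal$.} Take the real part of $\langle\,\cdot\,,X\cdot\psi\rangle$, or equivalently split the Clifford expression into its $1$-form (vector) part and its $3$-form part. Since $\langle v\cdot\psi,\psi\rangle$ is purely imaginary and $\langle \omega\cdot\psi,\psi\rangle$ is real for $2$-forms $\omega$, pairing with $\psi$ separates scalar and $2$-form pieces. This should force $d\scal(X)|\psi|^2=0$. A cleaner alternative: $|\psi|$ is constant, because $X|\psi|^2=2\mathrm{Re}\langle A(X)\psi,\psi\rangle=0$. Then use the standard identity $D\psi=-\mu\scal\,\psi$ together with the Schrödinger–Lichnerowicz formula
\[
D^2\psi=\nabla^*\nabla\psi+\tfrac{\scal}{4}\psi .
\]
Computing $D(\mu\scal\psi)=\mu\, d\scal\cdot\psi+\mu^2\scal^2\psi$ and $\nabla^*\nabla\psi=\mu^2|\Ric|^2\psi-\mu(\delta\Ric)\cdot\psi$, comparing gives
\[
-\mu\, d\scal\cdot\psi+\mu^2\scal^2\psi=\mu^2|\Ric|^2\psi+\tfrac{\mu}{2}d\scal\cdot\psi+\tfrac{\scal}{4}\psi .
\]
Since $\psi$ is nowhere zero and a nonzero vector acts invertibly, taking real and imaginary parts (vector parts versus scalar parts, via $\langle\cdot,\psi\rangle$) gives $d\scal=0$ and
\[
\mu^2\bigl(\scal^2-|\Ric|^2\bigr)=\tfrac{\scal}{4}.
\]
This is the key identity.

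\textbf{Inequality.} Cauchy–Schwarz on the trace of the symmetric endomorphism $\Ric$ gives $|\Ric|^2\ge \scal^2/n$, with equality iff $\Ric=\tfrac{\scal}{n}\Id$. Then $\scal/4=\mu^2(\scal^2-|\Ric|^2)\le \mu^2\scal^2\tfrac{n-1}{n}$.

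If $\scal=0$, the identity gives $|\Ric|^2=0$, so $M$ is Ricci-flat and $\nabla\psi=0$.

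If $\scal\neq0$, dividing by $\scal^2>0$ gives $\mu^2\ge \frac{n}{4(n-1)\scal}$. This is trivial when $\scal<0$ and meaningful when $\scal>0$. Equality holds iff $|\Ric|^2=\scal^2/n$, i.e.\ $M$ is Einstein, in which case $\psi$ is Killing with constant $\mu\scal/n$. Conversely, an Einstein metric gives equality; the requirement $n\ge2$ is used here so that $n-1\neq 0$.

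\textbf{Main obstacle.} The main difficulty is getting the signs and the factor in $\nabla^*\nabla\psi$ right, particularly the $\delta\Ric$ term. This requires the contracted Bianchi identity, and checking that the real/imaginary separation really isolates $d\scal$. I would first verify the computation on round spheres with Killing spinors, where $\mu\scal/n=\pm\tfrac12$ and $\scal=n(n-1)$, to confirm that the resulting constant matches the claimed equality case.
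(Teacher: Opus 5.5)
Your proof is correct. Its second half is exactly the paper's: Cauchy--Schwarz applied to $\mu^2(\scal^2-|\Ric|^2)=\tfrac14\scal$, the case $\scal=0$, and the equality discussion.

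The difference is in how you obtain the two inputs, constancy of $\scal$ and the quadratic identity. The paper quotes Proposition~\ref{prop:basic_properties_of_GKS}, which comes from Moroianu--Semmelmann by contracting the curvature identity of Proposition~\ref{prop:curv_to_GKS}. Applied with $A=\mu\Ric$, it gives $\delta\Ric+d\scal=0$ and the quadratic identity. The paper then combines the first relation with the contracted Bianchi identity $2\delta\Ric+d\scal=0$ to get $d\scal=0$.

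You instead get both inputs in one stroke from the Schr\"odinger--Lichnerowicz formula. For any symmetric $A$ with $a=\tr A$, one has $D\psi=-a\psi$, $D^2\psi=-da\cdot\psi+a^2\psi$ and $\nabla^*\nabla\psi=\delta A\cdot\psi+|A|^2\psi$. Hence
\[-(da+\delta A)\cdot\psi+\bigl(a^2-|A|^2-\tfrac14\scal\bigr)\psi=0.\]
Splitting this into its vector and scalar parts is precisely items~2 and~3 of that proposition. So your route is self-contained and bypasses the first contracted identity, whereas the paper's is just a citation.

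There are a few sign slips to fix; none affects the conclusion.
\begin{itemize}
\item With $\delta A=-\sum_i(\nabla_{e_i}A)(e_i)$, the correct formula is $\nabla^*\nabla\psi=+\mu(\delta\Ric)\cdot\psi+\mu^2|\Ric|^2\psi$. After Bianchi the combined equation is therefore $-\tfrac{\mu}{2}\,d\scal\cdot\psi+\bigl(\mu^2(\scal^2-|\Ric|^2)-\tfrac14\scal\bigr)\psi=0$. The coefficient is $-\tfrac{\mu}{2}$, not $-\tfrac{3\mu}{2}$, but it is nonzero either way since $\mu\neq0$.
\item Likewise $D(\mu\scal\,\psi)=\mu\,d\scal\cdot\psi-\mu^2\scal^2\psi$, although your final expression for $D^2\psi$ is right.
\item Two-forms act skew-Hermitian, so $\langle\omega\cdot\psi,\psi\rangle$ is imaginary, not real. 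This claim only appears in your discarded first approach.
\end{itemize}

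State the separation step precisely. Taking the real part of the pairing with $\psi$ kills the vector term and, since $|\psi|$ is a nonzero constant, gives the scalar identity. What remains is $d\scal\cdot\psi=0$, and $|d\scal\cdot\psi|=|d\scal|\,|\psi|$ then gives $d\scal=0$. The imaginary part alone would not suffice for this.
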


Next, we investigate the local structure of manifolds admitting Ricci Killing spinors.
It is a classical result that Riemannian spin manifolds admitting non-parallel Killing spinors are locally irreducible.
However, as we will see in Section \ref{sec:GKS}, generalized Killing spinors can exist on locally reducible manifolds.
We prove that the existence of Ricci Killing spinors on product manifolds imposes a strong restriction on the geometry of the factors.
More precisely, if a Riemannian spin manifold contains an open subset isometric to a Riemannian product of two non-Ricci-flat Riemannian manifolds, then it does not admit a Ricci Killing spinor.

By taking a product with a Riemannian spin manifold admitting a parallel spinor, one can easily construct Ricci Killing spinors that are not Killing spinors.
Therefore, it is natural to ask whether one can find examples of Ricci Killing spinors on non-product manifolds.
To address this question, we first consider Ricci Killing spinors on Sasakian $\eta$-Einstein manifolds, and we obtain the following characterization.

\begin{MainTheoremB}
  Let $(M^{2m+1}, g, \xi, \eta, \varphi)\,\,(m \geq 1)$ be a simply connected Sasakian $\eta$-Einstein spin manifold.
  Then $M$ admits a Ricci Killing spinor if and only if $M$ is either transversely Ricci-flat or Sasakian-Einstein.
  Furthermore, Ricci Killing spinors on transversely Ricci-flat Sasakian spin manifolds are transversely parallel.
\end{MainTheoremB}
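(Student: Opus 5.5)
The plan is to compute $\Ric$ on an $\eta$-Einstein Sasakian manifold and to reduce the Ricci Killing equation to an equation with two eigenvalues. Recall that $\eta$-Einstein means $\Ric = a\,\Id + b\,\eta\otimes\xi$. On a Sasakian manifold $\Ric(\xi)=2m\,\xi$, so
\[
\Ric|_{\xi^\perp}=a\,\Id,\qquad a+b=2m,
\]
and both $a,b$ are constant for $m\ge 1$ (for $m=1$ one assumes constancy as part of the definition). The transverse Ricci tensor is $\Ric^T=\Ric|_{\xi^\perp}+2\,\Id=(a+2)\Id$. Hence transversely Ricci-flat means $a=-2$, and Sasakian--Einstein means $a=2m$, i.e.\ $b=0$.

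A Ricci Killing spinor is therefore a spinor $\psi$ with
\[
\nabla_\xi\psi=2m\mu\,\xi\cdot\psi,\qquad \nabla_X\psi=a\mu\,X\cdot\psi \quad (X\perp\xi).
\]

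\textbf{Sufficiency.} In the Sasakian--Einstein case, $M$ is simply connected, spin and Einstein with $\Ric=2m$, so it carries Killing spinors with constant $\pm\tfrac12$ (Friedrich--Kath, B\"ar's cone). These are Ricci Killing spinors with $\mu=\pm\frac1{4m}$.

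In the transversely Ricci-flat case $a=-2$, I would construct the spinors from the spin$^c$-free structure as follows. Write $\nabla_X=\nabla^T_X+$ (terms in $\varphi$ and $\eta$) using the standard formula relating the Levi-Civita spinor connection to the transverse one:
\[
\nabla_X\psi=\nabla^T_X\psi+\tfrac12\,\varphi X\cdot\xi\cdot\psi
\]
(up to sign convention), together with $\nabla_\xi\psi=\mathcal L_\xi\psi+\tfrac14\,d\eta\cdot\psi$. Transversely parallel spinors exist locally since the transverse holonomy lies in $\SU(m)$. On the extremal components of $\Sigma$, where $\varphi X\cdot\xi\cdot\psi=\pm X\cdot\psi$ and $d\eta\cdot\psi$ is a multiple of $\xi\cdot\psi$, this formula yields $\nabla_X\psi=-2\mu X\cdot\psi$ with $\mu=\mp\frac14$ and a matching $\xi$-equation. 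Simple connectivity is needed to globalize: one lifts via the Riemannian cone, or argues via the transversal structure being Calabi--Yau. I would cite the known result that simply connected $\eta$-Einstein Sasakian spin manifolds with $a=-2$ carry such spinors (Boyer--Galicki--Matzeu), and check that the constants match.

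\textbf{Necessity.} This is the main obstacle. I would differentiate the equation and use the curvature identity
\[
\sum_i e_i\cdot R(X,e_i)\psi=-\tfrac12\Ric(X)\cdot\psi .
\]
For a spinor with $\nabla_X\psi=A(X)\cdot\psi$, one has
\[
R(X,Y)\psi=(d^\nabla A)(X,Y)\cdot\psi+[A(Y),A(X)]\cdot\psi ,
\]
where $[A(Y),A(X)]=A(Y)\cdot A(X)-A(X)\cdot A(Y)$ in the Clifford algebra. With $A=\mu\Ric$ we have $\nabla\Ric=b\,(\nabla\eta\otimes\xi+\eta\otimes\nabla\xi)$, which is expressible through $\varphi$. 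Contracting and then separating into the $\xi$-component and the $\xi^\perp$-components gives algebraic identities in $a$, $b$, $\mu$ acting on $\psi$. Taking real parts of inner products with $\psi$ (which, like $\psi$ itself, has constant length by Theorem~\ref{theo:inequality_for_Killing_number}'s argument) should lead to a polynomial relation of the shape
\[
b\,\mu\,(a+2)=0
\]
and the mixed identity $2\mu^2 a\cdot 2m=\dots$. This forces either $b=0$ (Einstein) or $a=-2$. I expect the cleanest route to be computing
\[
\sum_i e_i\cdot R(\xi,e_i)\psi \quad\text{and}\quad \sum_i e_i\cdot R(X,e_i)\psi
\]
explicitly, and comparing each with $-\tfrac12\Ric\cdot\psi$. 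Terms like $\varphi X\cdot\xi\cdot\psi$ and $d\eta\cdot\psi$ appear, and the consistency conditions pin down $\mu$. It is also consistent with Theorem~\ref{theo:inequality_for_Killing_number}: $\scal=2ma+2m$ must satisfy the inequality in that theorem.

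\textbf{Transverse parallelism.} For $a=-2$, substituting $\nabla_X\psi=-2\mu X\cdot\psi$ into the displayed formula relating $\nabla$ and $\nabla^T$ shows that $\nabla^T_X\psi$ equals $\bigl(-2\mu X\pm\tfrac12\varphi X\cdot\xi\bigr)\cdot\psi$. The necessity computation should force $\mu=\pm\frac14$ and $\psi$ into the component where this expression vanishes, giving $\nabla^T\psi=0$.
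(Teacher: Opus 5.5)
\textbf{The gap: the ``only if'' direction for $m\ge 2$.} This is not merely a computation left undone: the method you describe cannot produce $b\mu(a+2)=0$. Write $\Ric=a\Id+b\,\eta\otimes\xi$. For $A=\mu\Ric$, the Ricci contraction of the curvature identity is exactly the Kim--Friedrich pair
\[
2\mu b\,\varphi X\cdot\xi\cdot\psi=\bigl(a-8m\mu^2a^2-4\mu^2ab\bigr)X\cdot\psi\quad(X\perp\xi),\qquad 2\mu b\,\Phi\cdot\psi=m\bigl(1-4\mu^2a^2-4\mu^2ab\bigr)\xi\cdot\psi .
\]
Its trace is $\mu^2a\bigl((2m-1)a+4m\bigr)=\frac{a+1}{4}$. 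Taking norms in the first identity and eliminating $\mu$ leaves $a\in\{2m,\,-2,\,-\frac{2m}{2m-1}\}$, not $(a-2m)(a+2)=0$.

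The third root survives pointwise. Take $a=-\frac{2m}{2m-1}$, $b=\frac{4m^2}{2m-1}$ and $\mu=-\frac{1}{4m}$. Then $2ma+b=0$ and $2\mu b=a$. Both identities, and their trace, hold for every $\psi$ with values in $\Sigma_0$, where $\varphi X\cdot\xi\cdot\psi=X\cdot\psi$ for $X\perp\xi$ and $\Phi\cdot\psi=-m\,\xi\cdot\psi$. So no contraction, and no inner product with $\psi$, can exclude this case. For $m=1$ this root coincides with $-2$, so your method would close in dimension three, but not for $m\ge2$.

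\textbf{What is missing: a first-order argument, which the Ricci contraction discards.} The paper supplies it in its corollary on quasi-Killing spinors on $\eta$-Einstein manifolds. For $b\neq0$, the identities above give $\Phi\cdot\psi=\pm m\,\xi\cdot\psi$. Differentiate this along a unit $X\perp\xi$, using $\nabla_X\xi=-\varphi X$, $(\nabla_X\Phi)\cdot\psi=-X\cdot\xi\cdot\psi$ and $X\cdot\Phi-\Phi\cdot X=2\varphi X$. This yields $(\pm2m\mu a-1)\,X\cdot\xi\cdot\psi=(2\mu a\mp m)\,\varphi X\cdot\psi$. Taking norms gives $(m^2-1)(4\mu^2a^2-1)=0$, so $\mu a=\pm\frac12$ for $m\ge2$. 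This kills the spurious root, where $\mu a=\frac{1}{2(2m-1)}$. The trace identity then forces $(a-2m)(a+2)=0$.

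Alternatively, on $\Sigma_0$ one has $\nabla_X\psi=\nabla^T_X\psi+\frac12X\cdot\psi$. Since $\nabla^T$ preserves $\Sigma_0$ and $X\cdot$ maps $\Sigma_0$ into $\Sigma_1$, comparing with $\mu a\,X\cdot\psi$ forces both $\mu a=\frac12$ and $\nabla^T_X\psi=0$.

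\textbf{The other parts.} Your sufficiency direction (modulo citation) and your transverse-parallelism step follow the paper's route. In the latter, the contracted identities do suffice: at $a=-2$ the trace gives $\mu^2=\frac1{16}$, and the $\xi$-identity gives $\Phi\cdot\psi=\pm m\,\xi\cdot\psi$, placing $\psi$ in $\Sigma_0\oplus\Sigma_m$.

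One caveat on sufficiency: the cone over a transversely Ricci-flat Sasakian manifold is Kähler but not Ricci-flat, since that requires $\Ric=2m\,g$. So ``lifting via the cone'' produces no spinors. Either cite Kim--Friedrich, as the paper does, or use that $\rho^T=0$ makes $\nabla^T$ flat on the line bundles $\Sigma_0,\Sigma_m$, and then invoke simple connectivity.
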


In particular, simply connected transversely Ricci-flat Sasakian spin manifolds provide examples of Ricci Killing spinors which are not Killing spinors.
We determine the possible values of $\mu$ and the dimension of the space of Ricci Killing spinors for each $\mu$.

We next investigate Ricci Killing spinors on $3$-dimensional Lie groups. Artacho \cite{Art25} classified invariant generalized Killing spinors on $3$-dimensional Lie groups.
We first determine which of these invariant generalized Killing spinors are Ricci Killing spinors.
We then consider spinors which are not necessarily invariant and obtain a complete classification of Ricci Killing spinors on simply connected $3$-dimensional unimodular Lie groups endowed with left-invariant Riemannian metrics.
The classification is summarized in Table~\ref{table:RKS_on_3dim_unimodular_Lie_groups}.

\begin{table}[htbp]
  \centering
  \setlength{\tabcolsep}{1.1em}
  \caption{Ricci Killing spinors on simply connected 3-dimensional unimodular Lie groups}
  \begin{tabular}{ccc}
    \toprule
    Lie algebra $\g$ & Milnor constants & Ricci Killing spinors \\
    \midrule
    \midrule

    \multirow{3}{*}{$\su(2)$}
    & \multirow{2}{*}{$\lambda_1 = \lambda_2 = \lambda_3 =: \lambda \neq 0$}
    & invariant, Killing, $\mu = \frac{1}{2\lambda}$ \\
    &
    & non-invariant, Killing, $\mu = -\frac{1}{2\lambda}$ \\
    & otherwise
    & none \\

    \midrule

    $\sl(2,\R)$
    & arbitrary
    & none \\

    \midrule

    \multirow{2}{*}{$\iso(2)$}
    & $\lambda_1 = \lambda_2 \neq 0,\ \lambda_3 = 0$
    & non-invariant, parallel, any $\mu \neq 0$ \\
    & $\lambda_1\lambda_2 > 0,\ \lambda_1 \neq \lambda_2,\ \lambda_3 = 0$
    & non-invariant, non-Killing, $\mu = -\frac{1}{2(\lambda_1 + \lambda_2)}$ \\

    \midrule

    \multirow{2}{*}{$\iso(1,1)$}
    & $\lambda_1\lambda_2 < 0,\ \lambda_1 + \lambda_2 \neq 0,\ \lambda_3 = 0$
    & non-invariant, non-Killing, $\mu = -\frac{1}{2(\lambda_1 + \lambda_2)}$ \\
    & $\lambda_1\lambda_2 < 0,\ \lambda_1 + \lambda_2 = 0,\ \lambda_3 = 0$
    & none \\

    \midrule

    $\heis_3$
    & $\lambda_1 = \lambda \neq 0,\ \lambda_2 = \lambda_3 = 0$
    & invariant, non-Killing, $\mu = -\frac{1}{2\lambda}$ \\

    \midrule

    $\R^3$
    & $\lambda_1 = \lambda_2 = \lambda_3 = 0$
    & invariant, parallel, any $\mu \neq 0$ \\

    \bottomrule
  \end{tabular}
  \label{table:RKS_on_3dim_unimodular_Lie_groups}
\end{table}

In particular, besides the Killing and parallel cases, non-Killing Ricci Killing spinors occur on the Heisenberg group and on certain left-invariant metrics on the simply connected Lie groups with Lie algebras $\iso(2)$ and $\iso(1,1)$.
The Ricci Killing spinors in the Heisenberg case are invariant, whereas those in the non-flat $\iso(2)$ and $\iso(1,1)$ cases are non-invariant.
We also give explicit expressions for the latter spinors.

Finally, we return to the relation between generalized Killing spinors and hypersurfaces in manifolds admitting parallel spinors.
For the left-invariant metrics considered above, both the metric and the associated endomorphism $A = \mu\Ric$ are real analytic, so the ambient construction of \cite{AMM13} applies.
We explicitly determine the ambient Riemannian manifold $\tilde{M}$ for the non-Killing Ricci Killing spinors obtained in the $\heis_3$, $\iso(2)$, and $\iso(1,1)$ cases.
The $4$-dimensional manifolds $\tilde{M}$ are Ricci-flat and carry parallel spinors whose restrictions to the hypersurfaces $M \subset \tilde{M}$ are the given Ricci Killing spinors. Moreover, the Weingarten tensor $W$ of $M$ in $\tilde{M}$ satisfies $W = 2\mu\Ric$.
Thus, these constructions provide explicit Ricci-flat spin manifolds carrying parallel spinors and containing $3$-dimensional non-Einstein hypersurfaces whose Weingarten tensors are proportional to their Ricci tensors.

This paper is organized as follows. In Section \ref{sec:GKS}, we briefly review the basic properties of generalized Killing spinors and discuss their behavior on product manifolds.
In Section \ref{sec:RKS}, we derive the fundamental equations for Ricci Killing spinors, prove Theorem \ref{theo:inequality_for_Killing_number}, and investigate the local structure of manifolds admitting Ricci Killing spinors.
In Section \ref{sec:RKS_on_Sasakian}, we focus on Sasakian $\eta$-Einstein manifolds, prove Theorem \ref{theo:RKS_on_Sasakian_eta_Einstein}, and investigate the relation between Ricci Killing spinors and transversely parallel spinors. 
In Section \ref{sec:RKS_on_3dim_Lie_groups}, we study Ricci Killing spinors on $3$-dimensional Lie groups, obtain a complete classification in the simply connected unimodular case, and give explicit expressions for the non-invariant non-Killing Ricci Killing spinors.
Finally, in Section \ref{sec:Ambient_metrics_of_RKS}, we determine the ambient metrics corresponding to the non-Killing Ricci Killing spinors obtained in the $3$-dimensional Lie group case.

\section{Generalized Killing spinors}\label{sec:GKS}

Let $(M,g)$ be an $n$-dimensional Riemannian spin manifold, and let
$\Sigma M$ denote its complex spinor bundle.
If $n$ is odd, there are two inequivalent choices of Clifford multiplication
on $\Sigma M$.
Throughout this paper, we choose the one for which the complex volume element
\[
\omega^{\C}
=
i^{\lfloor (n+1)/2\rfloor}e_1\cdot\ldots\cdot e_n
\]
acts as $\Id$ on $\Sigma M$.
If $n$ is even, the complex spinor bundle carries, up to equivalence, a unique
Clifford multiplication.

We first review the basic properties of generalized Killing spinors, following \cite{MS14a}.
For the basics of spin geometry, we refer the reader to \cites{LM89,BHMMM}.
\begin{defi}
A non-zero spinor field $\psi \in \Gamma(\Sigma M)$ on a Riemannian spin manifold $(M,g)$ is called a \textit{generalized Killing spinor} if there exists a symmetric endomorphism $A$ of the tangent bundle $TM$ such that
\[\nabla_X \psi = A(X) \cdot \psi \quad \text{for all } X \in TM.\]
Here, an endomorphism $A$ is said to be symmetric if $g(A(X), Y) = g(X, A(Y))$ for all $X, Y \in TM$.
\end{defi}
If $A$ is a constant multiple of the identity map, then $\psi$ is called a Killing spinor, and in this case $(M, g)$ is Einstein. In particular, if $A = 0$, then $\psi$ is called a parallel spinor, and in this case $(M, g)$ is Ricci-flat.
 However, in general, a Riemannian spin manifold admitting a generalized Killing spinor is not necessarily Einstein. We give examples of such manifolds in Section \ref{sec:RKS_on_Sasakian}.

By the same argument as in the case of Killing spinors, we can show that the following lemma holds for generalized Killing spinors as well.
\begin{lemm}\label{lemm:GKS_has_no_zeroes}
The norm of a generalized Killing spinor is constant. In particular, a generalized Killing spinor has no zeroes.
\end{lemm}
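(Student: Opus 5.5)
The plan is to differentiate the pointwise squared norm $|\psi|^2$ along an arbitrary vector field and show that the derivative vanishes. We use two standard facts. First, the spin connection $\nabla$ on $\Sigma M$ is metric with respect to the natural Hermitian inner product $\langle\cdot,\cdot\rangle$. Second, Clifford multiplication by a real tangent vector is skew-Hermitian, that is, $\langle Y\cdot\varphi,\phi\rangle = -\langle\varphi, Y\cdot\phi\rangle$ for all $Y\in TM$. In particular, $\langle Y\cdot\varphi,\varphi\rangle$ is purely imaginary, so $\mathrm{Re}\langle Y\cdot\varphi,\varphi\rangle = 0$.

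For any $X\in TM$, metric compatibility together with the generalized Killing equation gives
\[
X\bigl(|\psi|^2\bigr) = \langle\nabla_X\psi,\psi\rangle + \langle\psi,\nabla_X\psi\rangle = 2\,\mathrm{Re}\langle A(X)\cdot\psi,\psi\rangle .
\]
Since $A(X)$ is a real tangent vector, skew-Hermiticity shows that the right-hand side vanishes. Hence $d|\psi|^2 = 0$, and $|\psi|$ is locally constant, so it is constant on each connected component of $M$. Symmetry of $A$ plays no role here; only the fact that $A(X)$ is a real vector matters.

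For the second assertion, note that $\psi$ is by definition not identically zero. If $\psi$ vanished at a point, then $|\psi|$ would be zero on the whole component containing that point, by the constancy just shown. Assuming $M$ is connected, as is implicit throughout the paper, this contradicts $\psi\neq 0$. So $\psi$ has no zeroes.

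There is no real obstacle in this argument. The only point requiring care is the sign convention for the Hermitian product and Clifford multiplication. With the convention $Y\cdot Y = -|Y|^2$, the vector $Y$ acts as a skew-Hermitian operator, and this is what makes the derivative of $|\psi|^2$ vanish.
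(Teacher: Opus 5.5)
Your proposal is correct and is exactly the standard argument the paper invokes without writing it out (``the same argument as in the case of Killing spinors''): differentiate $|\psi|^2$, then use that $\nabla$ is metric and that Clifford multiplication by the real vector $A(X)$ is skew-Hermitian. Your remark that the no-zeroes assertion needs $M$ connected (or $\psi$ non-zero on each component) correctly identifies the paper's implicit assumption.
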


Let $R^{\Sigma M}(X, Y) = [\nabla_X, \nabla_Y] - \nabla_{[X, Y]}$ be the curvature tensor of the spin connection $\nabla$ on the spinor bundle $\Sigma M$. 
If $\psi$ is a generalized Killing spinor with a symmetric endomorphism $A$, then we have
  \[R^{\Sigma M}(X,Y) \psi = (d^{\nabla} A)(X,Y) \cdot \psi - 2A(X) \wedge A(Y) \cdot \psi.\]
Here, $d^{\nabla} A$ is the exterior covariant derivative of $A$ defined by $(d^{\nabla} A)(X,Y) = (\nabla_X A)(Y) - (\nabla_Y A)(X)$.
Let $R \colon \bw^2 TM \to \bw^2 TM$ denote the curvature operator defined by
\[g(R(X \wedge Y), Z \wedge W) = g(R(X, Y)Z, W).\]
Then the curvature of the spin connection satisfies
\[R^{\Sigma M}(X, Y)\varphi = \frac{1}{2}R(X \wedge Y) \cdot \varphi\]
for any $\varphi \in \Gamma(\Sigma M)$.
Thus, the formula for $R^{\Sigma M}(X, Y)\psi$ can be written as follows.
\begin{prop}\label{prop:curv_to_GKS}
  Let $\psi \in \Gamma(\Sigma M)$ be a generalized Killing spinor on a Riemannian spin manifold $(M,g)$ with a symmetric endomorphism $A$.
  Then the following equation holds:
  \[\frac{1}{2}R(X\wedge Y) \cdot \psi = (d^{\nabla} A)(X,Y) \cdot \psi - 2A(X) \wedge A(Y) \cdot \psi\]
  for all $X, Y \in TM$. 
\end{prop}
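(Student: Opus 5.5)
The plan is a direct curvature computation: evaluate $R^{\Sigma M}(X,Y)\psi$ by differentiating the defining equation $\nabla_X\psi = A(X)\cdot\psi$ twice, then replace the left-hand side by $\frac{1}{2}R(X\wedge Y)\cdot\psi$ using the stated identity for the curvature of the spin connection.

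First I will compute $\nabla_X\nabla_Y\psi$. The spin connection is compatible with Clifford multiplication, so
\[\nabla_X(A(Y)\cdot\psi) = (\nabla_X(A(Y)))\cdot\psi + A(Y)\cdot\nabla_X\psi .\]
I will expand $\nabla_X(A(Y)) = (\nabla_X A)(Y) + A(\nabla_X Y)$. Substituting the Killing-type equation once more gives
\[\nabla_X\nabla_Y\psi = (\nabla_X A)(Y)\cdot\psi + A(\nabla_X Y)\cdot\psi + A(Y)\cdot A(X)\cdot\psi .\]
Next I antisymmetrize in $X,Y$ and subtract $\nabla_{[X,Y]}\psi = A([X,Y])\cdot\psi$. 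The terms $A(\nabla_XY)-A(\nabla_YX)-A([X,Y])$ cancel because the Levi-Civita connection is torsion-free. This leaves
\[R^{\Sigma M}(X,Y)\psi = (d^\nabla A)(X,Y)\cdot\psi + \bigl(A(Y)\cdot A(X) - A(X)\cdot A(Y)\bigr)\cdot\psi .\]

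The only step needing care is identifying the commutator term. It rests on the Clifford convention $v\cdot w + w\cdot v = -2g(v,w)$ and the embedding $v\wedge w \mapsto \frac12(v\cdot w - w\cdot v)$ of $\bw^2$ into the Clifford algebra. Under these, $A(Y)\cdot A(X) - A(X)\cdot A(Y) = -2A(X)\wedge A(Y)$. Symmetry of $A$ is not needed here; it would only enter in checking the constant-norm lemma.

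Finally I replace $R^{\Sigma M}(X,Y)\psi$ by $\frac12 R(X\wedge Y)\cdot\psi$, which is the identity recalled just before the proposition, and this yields the claimed formula. I expect the main obstacle to be bookkeeping of sign and normalization conventions for Clifford multiplication by 2-forms and for $R$. These conventions must match the factor $\frac12$ in that identity, which I will take as given from the paper.
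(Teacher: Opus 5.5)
Your proposal is correct and follows the paper's route. The paper likewise obtains $R^{\Sigma M}(X,Y)\psi = (d^{\nabla}A)(X,Y)\cdot\psi - 2A(X)\wedge A(Y)\cdot\psi$ from the generalized Killing equation (stated there without derivation; you supply the torsion-free cancellation and the Clifford commutator identity) and then substitutes $R^{\Sigma M}(X,Y) = \frac{1}{2}R(X\wedge Y)\cdot$.
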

\begin{rema}\label{rema:curv_to_GKS_dim3}
When $M$ is three-dimensional, Proposition \ref{prop:curv_to_GKS} can be reformulated in a purely tensorial form.
Let $\ast$ denote the Hodge star operator.
Since $V \cdot \psi = (\ast V) \cdot \psi$ for every $V \in TM$ and the action of $\bw^2 T_pM$ on a non-zero spinor $\psi_p$ is injective, Proposition \ref{prop:curv_to_GKS} implies that any generalized Killing spinor associated with a symmetric endomorphism $A$ satisfies
\[\frac{1}{2}R(X \wedge Y) = \ast(d^{\nabla} A)(X, Y) - 2A(X) \wedge A(Y).\]
Conversely, if this equation holds, then the connection $\nabla^A_X := \nabla_X - A(X) \cdot$ on $\Sigma M$ is flat. Hence, generalized Killing spinors associated with $A$ exist locally (see also \cite{FM22}).
\end{rema}

By taking suitable contractions of the curvature identity in Proposition \ref{prop:curv_to_GKS}, one obtains further basic identities for generalized Killing spinors.

\begin{prop}[\cite{MS14a}]\label{prop:basic_properties_of_GKS}
Let $\psi \in \Gamma(\Sigma M)$ be a generalized Killing spinor on a Riemannian spin manifold $(M,g)$ with a symmetric endomorphism $A$. We set $a = \tr(A) \in C^{\infty}(M)$. Then the following equations hold:
\begin{enumerate}
  \item $\sum_{i=1}^n e_i \wedge (\nabla_{e_i} A)(X) \cdot \psi = \left(\frac{1}{2}\Ric(X) + 2A^2(X) - 2a A(X)\right) \cdot \psi \quad \forall X \in TM$,
  \item $\delta A + d a = 0$,
  \item $a^2 - |A|^2 = \frac{1}{4}\scal$,
\end{enumerate}
where $\{e_i\}_{i=1}^n$ is a local orthonormal frame of $TM$, $\delta A = -\sum_{i=1}^n (\nabla_{e_i} A)(e_i)$ is the divergence of $A$, $|A|^2 = \tr(A^2) = \sum_{i=1}^n g(A(e_i), A(e_i))$ is the squared norm of $A$, and $\scal$ is the scalar curvature of $(M, g)$.
\end{prop}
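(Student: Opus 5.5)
Starting from Proposition~\ref{prop:curv_to_GKS}, I would contract the curvature identity with Clifford multiplication by a frame vector and then take real parts of inner products with $\psi$. By Lemma~\ref{lemm:GKS_has_no_zeroes} we may normalize $|\psi|=1$.

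For (1), set $Y=e_i$ in Proposition~\ref{prop:curv_to_GKS}, multiply on the left by $e_i$, and sum over $i$. On the left, the standard identity $\sum_i e_i\cdot R(X\wedge e_i) = -\Ric(X)$ (up to the sign convention fixed by $g(R(X\wedge Y),Z\wedge W)=g(R(X,Y)Z,W)$), combined with the first Bianchi identity, gives $\sum_i e_i\cdot \tfrac12 R(X\wedge e_i)\cdot\psi=-\tfrac12\Ric(X)\cdot\psi$.

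On the right, I would expand $\sum_i e_i\cdot(d^\nabla A)(X,e_i)$. The term $\sum_i e_i\cdot(\nabla_X A)(e_i)$ has vanishing $2$-form part because $\nabla_X A$ is symmetric, so it equals $-\tr(\nabla_XA)=-X(a)$. The term $-\sum_i e_i\cdot(\nabla_{e_i}A)(X)$ splits as
\[
-\sum_i e_i\wedge(\nabla_{e_i}A)(X)+\sum_i g(e_i,(\nabla_{e_i}A)(X)) = -\sum_i e_i\wedge(\nabla_{e_i}A)(X) - (\delta A)(X).
\]
For the quadratic term, write $A(X)\wedge A(e_i)=A(X)\cdot A(e_i)+g(A(X),A(e_i))$. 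Using $\sum_i e_i\cdot A(e_i)=-a$ and the Clifford relations, one gets $\sum_i e_i\cdot(A(X)\wedge A(e_i)) = -2A^2(X)+2aA(X)$ up to signs I will fix carefully.

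Collecting terms gives an identity of the form
\[
\Big(\sum_i e_i\wedge(\nabla_{e_i}A)(X) - \tfrac12\Ric(X)-2A^2X+2aAX\Big)\cdot\psi = -\big(X(a)+\delta A(X)\big)\psi .
\]
Now I take the real part of the inner product with $\psi$. Clifford multiplication by vectors and by $2$-forms is skew-Hermitian, so $\langle V\cdot\psi,\psi\rangle$ and $\langle\omega\cdot\psi,\psi\rangle$ are purely imaginary. The real part of the left side therefore vanishes, while the right side is the real scalar $-(da+\delta A)(X)|\psi|^2$. Hence $da+\delta A=0$, which is (2). The remaining identity, with the scalar term removed, is exactly (1).

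For (3), I take (1) with $X=e_j$, Clifford-multiply on the left by $e_j$, and sum over $j$. The term $\sum_{i,j}e_j\cdot(e_i\wedge(\nabla_{e_i}A)(e_j))$ decomposes into a $3$-form part, which vanishes by symmetry of $\nabla_{e_i}A$ in its two slots, and a vector part built from $\delta A$ and $da$. The right-hand side becomes $\tfrac12\scal-2|A|^2+2a^2$ plus possible vector terms. Taking the real part of the inner product with $\psi$ once more kills the skew parts and yields $a^2-|A|^2=\tfrac14\scal$, after using (2) if needed.

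I expect the main obstacle to be sign bookkeeping. The curvature sign conventions and the Clifford relation $XY+YX=-2g(X,Y)$ must be tracked consistently so that the contractions produce exactly the stated coefficients. I would verify the final signs against the Killing case $A=\lambda\Id$, where (3) must give $n(n-1)\lambda^2=\tfrac14\scal$ and (1) must give $\Ric=4(n-1)\lambda^2\Id$.
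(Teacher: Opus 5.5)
Your proposal is correct and is essentially the standard Clifford-contraction argument of \cite{MS14a} to which the paper defers; the real part of the Hermitian product replaces the real inner product, as the paper's remark on complex spinors indicates. Two bookkeeping slips need fixing: the bare sum $\sum_i e_i\cdot(A(X)\wedge A(e_i))$ equals $aA(X)-A^2(X)$, so it is the prefactor $-2$ that produces the terms $2A^2(X)-2aA(X)$ in your (correct) collected identity; and in (3) the right-hand side is $\bigl(-\tfrac12\scal-2|A|^2+2a^2\bigr)\psi$ because $\sum_j e_j\cdot\Ric(e_j)=-\scal$ — your $+\tfrac12\scal$ would give the wrong sign, whereas the corrected sign yields $a^2-|A|^2=\tfrac14\scal$, which your Killing-case check would confirm.
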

We note that Proposition \ref{prop:basic_properties_of_GKS} was proved for real spinor bundles in \cite{MS14a}, but the same identities hold for complex spinor bundles as well (see also \cite{AMM13}). 

Finally, we briefly remark on the irreducibility of manifolds admitting generalized Killing spinors. It is known that if a Riemannian spin manifold admits a non-parallel Killing spinor, then it is locally irreducible (i.e., it cannot be decomposed locally as a Riemannian product of two or more manifolds) \cite{BFGK}. However, this is not the case for generalized Killing spinors.
For example, we have the following proposition.
\begin{prop}\label{prop:GKS_on_product_manifolds}
  Let $M$ and $N$ be Riemannian spin manifolds, with $M$ of even dimension.
  If $M$ admits a generalized Killing spinor and $N$ admits a parallel spinor, then the product manifold $M \times N$ admits a generalized Killing spinor.
\end{prop}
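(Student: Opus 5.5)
The idea is to use the standard identification of the spinor bundle of a product with a tensor product of the factor spinor bundles. Let $\dim M = 2k$, let $\psi$ be a generalized Killing spinor on $M$ with symmetric endomorphism $A$, and let $\phi$ be a parallel spinor on $N$. Since $M$ is even-dimensional, there is an isomorphism
\[\Sigma(M\times N) \cong \Sigma M \otimes \Sigma N\]
(pulled back along the projections). Under it, Clifford multiplication is given by
\[X\cdot(\sigma\otimes\tau)=(X\cdot\sigma)\otimes\tau \quad \text{for } X\in TM,\]
\[Y\cdot(\sigma\otimes\tau)=(\omega_M\cdot\sigma)\otimes(Y\cdot\tau) \quad \text{for } Y\in TN,\]
up to a constant scalar. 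Here $\omega_M$ denotes the (real or complex) volume element of $M$. The even dimension of $M$ is exactly what makes this twisted tensor product work. Moreover, the spin connection of the product metric is the tensor product connection $\nabla^{\Sigma M}\otimes 1+1\otimes\nabla^{\Sigma N}$. This holds because the Levi-Civita connection of the product is the direct sum of the factor connections, so the local connection forms split.

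With this in place, define $\Psi=\psi\otimes\phi$ and extend $A$ to $\tilde A=A\oplus 0$ on $T(M\times N)=TM\oplus TN$. The endomorphism $\tilde A$ is clearly symmetric. For $X\in TM$ we compute
\[\nabla_X\Psi=(\nabla_X\psi)\otimes\phi=(A(X)\cdot\psi)\otimes\phi=\tilde A(X)\cdot\Psi.\]
For $Y\in TN$, using that $\phi$ is parallel, we get
\[\nabla_Y\Psi=\psi\otimes\nabla_Y\phi=0=\tilde A(Y)\cdot\Psi.\]
Both computations use only the first Clifford rule, since $\tilde A(X)\in TM$. Finally, $\Psi\neq 0$ because $\psi$ and $\phi$ are non-zero; by Lemma~\ref{lemm:GKS_has_no_zeroes}, $\psi$ even has no zeroes. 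Hence $\Psi$ is a generalized Killing spinor with respect to $\tilde A$.

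The main point to check carefully is the first step: that the product spin structure and its spin connection really correspond to the tensor product with the stated Clifford action. This requires verifying the Clifford relations $Y\cdot Y\cdot = -|Y|^2$ and the anticommutation $X\cdot Y\cdot=-Y\cdot X\cdot$. The latter holds because $\omega_M$ anticommutes with vectors in $TM$ in even dimension. One also has to check compatibility with connections; this follows since $\nabla\omega_M=0$, and since the spin connection is determined by the Levi-Civita connection acting through $\frac14\sum \Gamma_{ij}^k e_j e_k$, whose mixed terms vanish for a product metric. One subtlety remains. When $\dim N$ is odd and the paper's convention fixes the Clifford multiplication by requiring $\omega^{\C}=\Id$, the tensor product module may need to be compared with the chosen one. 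If necessary, I would replace $\phi$ with its image under the isomorphism of Clifford modules, or note that the parallel spinor condition is insensitive to the choice of sign.
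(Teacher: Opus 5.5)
Your proposal is correct and follows the paper's proof: the identification $\Sigma(M\times N)\cong\Sigma M\otimes\Sigma N$ (the paper cites B\"ar) with the $\omega^{\C}_M$-twisted Clifford action, the tensor product spin connection, and the extension $\tilde A = A\oplus 0$. Your worry about odd $\dim N$ is harmless, since a direct check shows that $\omega^{\C}_{M\times N}$ acts as $\Id\otimes\omega^{\C}_N$, so the paper's convention on $N$ carries over to the product automatically.
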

\begin{proof}
  If $M$ is even-dimensional, then the spinor bundle of $M \times N$ is isomorphic to the tensor product $\Sigma M \otimes \Sigma N$ of the spinor bundles of $M$ and $N$ (see \cite{Bar98}).
  Hence, for a spinor field $\psi_M \in \Gamma(\Sigma M)$ and a spinor field $\psi_N \in \Gamma(\Sigma N)$, we can define a spinor field $\psi \in \Gamma(\Sigma (M \times N))$ by $\psi = \psi_M \otimes \psi_N$. We note that the Clifford multiplication on $M \times N$ is given by
  \begin{align*}
    X \cdot (\psi_M \otimes \psi_N) &= (X \cdot \psi_M) \otimes \psi_N,\\
    Y \cdot (\psi_M \otimes \psi_N) &= (\omega^{\C}_M \cdot \psi_M) \otimes (Y \cdot \psi_N),
  \end{align*}
  for $X \in TM$ and $Y \in TN$, where $\omega^{\C}_M$ denotes the complex volume element of $M$.
  The spin connection on $M \times N$ is given by
  \[\nabla_{X + Y}(\psi_M \otimes \psi_N) = (\nabla^M_X \psi_M) \otimes \psi_N + \psi_M \otimes (\nabla^N_Y \psi_N) \quad (X \in TM, Y \in TN).\]
  If $\psi_M$ is a generalized Killing spinor on $M$ with a symmetric endomorphism $A$ and $\psi_N$ is a parallel spinor on $N$, then we have
  \begin{align*}
    \nabla_{X + Y}(\psi_M \otimes \psi_N) &= (\nabla^M_X \psi_M) \otimes \psi_N + \psi_M \otimes (\nabla^N_Y \psi_N)\\
    &= (A(X) \cdot \psi_M) \otimes \psi_N\\
    &= A(X) \cdot (\psi_M \otimes \psi_N).
  \end{align*}
  Therefore, if we define a symmetric endomorphism $\tilde{A}$ of $TM \oplus TN$ by $\tilde{A}(X + Y) = A(X)$, then $\psi$ is a generalized Killing spinor on $M \times N$ with the symmetric endomorphism $\tilde{A}$.
\end{proof}

\section{Ricci Killing spinors}\label{sec:RKS}
In this section, we introduce the notion of Ricci Killing spinors and investigate their basic properties.
We also characterize when a Ricci Killing spinor reduces to a Killing spinor.
\begin{defi}
A non-zero spinor field $\psi \in \Gamma(\Sigma M)$ on a Riemannian spin manifold $(M,g)$ is called a \textit{Ricci Killing spinor} if there exists a non-zero constant $\mu \in \R\setminus\{0\}$ such that
\[\nabla_X \psi = \mu\Ric(X) \cdot \psi \quad \text{for all } X \in TM,\]
where $\Ric$ is the Ricci tensor viewed as an endomorphism of the tangent bundle.
\end{defi}

Real Killing spinors and parallel spinors are special cases of Ricci Killing spinors. Indeed, if $\psi$ is a Killing spinor satisfying $\nabla_X \psi = \lambda X \cdot \psi$ for some constant $\lambda \in \R\setminus\{0\}$, then the manifold is Einstein with $\Ric =  4(n-1)\lambda^2 \Id$. The Killing spinor equation implies that $\psi$ is also a Ricci Killing spinor with $\mu = (4(n-1)\lambda)^{-1}$.
On the other hand, if $\psi$ satisfies $\nabla_X \psi = 0$ for all $X \in TM$, then the manifold is Ricci-flat. In this case, $\psi$ is also a Ricci Killing spinor with any non-zero constant $\mu$.

\begin{rema}
We exclude $\mu = 0$ from the definition, since the parallel case is already included as described above.
\end{rema}

Applying Proposition \ref{prop:basic_properties_of_GKS} to $A = \mu \Ric$, we obtain the following identities.
\begin{prop}\label{prop:Basic_properties_of_RKS}
Let $\psi \in \Gamma(\Sigma M)$ be a Ricci Killing spinor on a Riemannian spin manifold $(M,g)$ with a constant $\mu$. Then the following equations hold:
\begin{enumerate}
\item $\mu \sum_{i=1}^n e_i \wedge (\nabla_{e_i} \Ric)(X) \cdot \psi = \left(\frac{1}{2}\Ric(X) + 2\mu^2\Ric^2(X) - 2\mu^2\scal\Ric(X)\right) \cdot \psi \quad \forall X \in TM$,
\item $\delta\Ric + d\scal = 0$,
\item $\mu^2(\scal^2 - |\Ric|^2) = \frac{1}{4}\scal$,
\end{enumerate}
where $\scal$ is the scalar curvature of $(M,g)$.
\end{prop}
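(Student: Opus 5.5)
The plan is to substitute $A = \mu\Ric$ into Proposition~\ref{prop:basic_properties_of_GKS} and simplify each item, using only elementary trace identities and the fact that $\mu$ is a nonzero constant. Since $\Ric$ is symmetric, $A$ is a symmetric endomorphism, so $\psi$ is a generalized Killing spinor with this $A$ and the proposition applies.

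For item (1), $\nabla_{e_i}A = \mu\nabla_{e_i}\Ric$ because $\mu$ is constant. We also have $A^2 = \mu^2\Ric^2$. The trace is $a = \tr(A) = \mu\tr(\Ric) = \mu\scal$, so
\[
2aA = 2\mu^2\scal\Ric .
\]
Substituting these three facts into item (1) of Proposition~\ref{prop:basic_properties_of_GKS} gives item (1) of the present statement directly.

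For item (2), we have $\delta A = \mu\,\delta\Ric$ and $da = \mu\, d\scal$. Proposition~\ref{prop:basic_properties_of_GKS}(2) then reads
\[
\mu(\delta\Ric + d\scal) = 0 .
\]
Dividing by $\mu \neq 0$ gives $\delta\Ric + d\scal = 0$.

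For item (3), we compute $a^2 - |A|^2 = \mu^2\scal^2 - \mu^2\tr(\Ric^2)$. Since $\tr(\Ric^2) = |\Ric|^2$, Proposition~\ref{prop:basic_properties_of_GKS}(3) yields
\[
\mu^2(\scal^2 - |\Ric|^2) = \tfrac14\scal .
\]

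There is no real obstacle in this argument. The only points needing care are that constancy of $\mu$ lets it pass through $\nabla$, $\delta$ and $d$, and that the identities of Proposition~\ref{prop:basic_properties_of_GKS} hold for complex spinors, which is noted in the paper after that proposition.
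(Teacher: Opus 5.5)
Your proposal is correct and matches the paper, which obtains these identities by substituting $A = \mu\Ric$ into Proposition~\ref{prop:basic_properties_of_GKS}. Your use of $a = \mu\scal$, $A^2 = \mu^2\Ric^2$, the constancy of $\mu$ to pull it through $\nabla$, $\delta$ and $d$, and $\mu \neq 0$ to divide in item (2) is exactly what is needed.
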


On the other hand, contracting the second Bianchi identity, we have $2\delta\Ric + d\scal = 0$. Combining this with the second equation in the above proposition, we have the following result.

\begin{coro}\label{coro:scal_is_constant}
If a Riemannian spin manifold $(M, g)$ admits a Ricci Killing spinor, then the scalar curvature $\scal$ is constant. Moreover, $\delta \Ric = 0$ and $|\Ric|^2$ is constant.
\end{coro}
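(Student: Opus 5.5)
Combining item (2) of Proposition \ref{prop:Basic_properties_of_RKS} with the contracted second Bianchi identity will give all three conclusions at once. The only earlier input needed is that proposition: the Bianchi identity is the standard $2\delta\Ric + d\scal = 0$.

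\textbf{Step 1: $\scal$ is constant and $\delta\Ric = 0$.} Item (2) gives $\delta\Ric + d\scal = 0$, and the Bianchi identity gives $2\delta\Ric + d\scal = 0$. Subtracting the first from the second yields $\delta\Ric = 0$. Substituting this back into either identity gives $d\scal = 0$. Hence $\scal$ is locally constant, and constant on each connected component, which is the standing meaning of ``constant'' here. We should make sure the sign convention $\delta A = -\sum_i (\nabla_{e_i}A)(e_i)$ used in Proposition \ref{prop:basic_properties_of_GKS} matches the one in the Bianchi identity. With that convention the Bianchi identity indeed reads $2\delta\Ric + d\scal = 0$, since $\sum_i(\nabla_{e_i}\Ric)(e_i) = \frac12 d\scal$. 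The argument only uses that the two identities have different coefficients, so it survives any consistent convention.

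\textbf{Step 2: $|\Ric|^2$ is constant.} Item (3) gives $\mu^2(\scal^2 - |\Ric|^2) = \frac14\scal$. Because $\mu \neq 0$, we can solve for
\[
|\Ric|^2 = \scal^2 - \frac{\scal}{4\mu^2}.
\]
By Step 1 the right-hand side is constant, so $|\Ric|^2$ is constant.

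There is no real obstacle. The only point needing care is the sign and normalization conventions for $\delta$ in the Bianchi identity, which were checked in Step 1.
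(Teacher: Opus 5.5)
Your proposal is correct and follows the paper's own argument: subtracting item (2) of Proposition~\ref{prop:Basic_properties_of_RKS} from the contracted Bianchi identity $2\delta\Ric + d\scal = 0$ gives $\delta\Ric = 0$ and $d\scal = 0$. Constancy of $|\Ric|^2$ then follows from item (3), using $\mu \neq 0$, exactly as you wrote.
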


By the definition of Ricci Killing spinors, if $(M, g)$ is Einstein, then Ricci Killing spinors are exactly Killing spinors. 
We consider the converse problem: if a Riemannian spin manifold $(M, g)$ admits a Ricci Killing spinor, under what conditions is it Einstein?

\begin{theoremalpha}\label{theo:inequality_for_Killing_number}
Let $(M, g)$ be an $n$-dimensional Riemannian spin manifold ($n \geq 2$) admitting a Ricci Killing spinor with a constant $\mu$. 
Then the scalar curvature $\scal$ is constant.
If $\scal = 0$ then $(M, g)$ is Ricci-flat and the Ricci Killing spinor is parallel. If $\scal \neq 0$ then the following inequality holds:
\begin{equation}\label{eq:ineq_of_Killing_number}
\mu^2 \geq \frac{n}{4(n-1)\scal}.
\end{equation}
Equality holds if and only if $(M, g)$ is Einstein, namely, the Ricci Killing spinor is a Killing spinor.
\end{theoremalpha}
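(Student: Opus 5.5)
The plan is to combine Corollary~\ref{coro:scal_is_constant} with the third identity of Proposition~\ref{prop:Basic_properties_of_RKS} and the Cauchy--Schwarz inequality for the trace of a symmetric endomorphism. Constancy of $\scal$ is exactly Corollary~\ref{coro:scal_is_constant}, so the remaining content is purely algebraic.

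First I would treat the case $\scal = 0$. Then Proposition~\ref{prop:Basic_properties_of_RKS}(3) reads $\mu^2(0 - |\Ric|^2) = 0$. Since $\mu \neq 0$, this forces $|\Ric|^2 = 0$, so $\Ric = 0$ and $(M,g)$ is Ricci-flat. The defining equation then gives $\nabla_X\psi = \mu\Ric(X)\cdot\psi = 0$, so $\psi$ is parallel.

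Next I would treat the case $\scal \neq 0$. Since $\Ric$ is symmetric, Cauchy--Schwarz gives $\scal^2 = (\tr \Ric)^2 \leq n|\Ric|^2$. Equality holds if and only if $\Ric$ is pointwise a multiple of $\Id$, that is, $\Ric = \frac{\scal}{n}\Id$ (which is consistent since $\scal$ is constant). Hence
\[
\frac{1}{4}\scal = \mu^2(\scal^2 - |\Ric|^2) \leq \mu^2\Big(\scal^2 - \frac{\scal^2}{n}\Big) = \mu^2\frac{(n-1)\scal^2}{n}.
\]

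To obtain \eqref{eq:ineq_of_Killing_number}, I would divide by $\frac{(n-1)}{n}\scal^2 > 0$, using $n \geq 2$ and $\scal \neq 0$. This yields $\mu^2 \geq \frac{n}{4(n-1)\scal}$. No sign assumption on $\scal$ is needed: if $\scal < 0$ the inequality holds trivially, and the division is by $\scal^2$, not by $\scal$.

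For the equality case, note that the only inequality used was Cauchy--Schwarz. So equality in \eqref{eq:ineq_of_Killing_number} holds exactly when $|\Ric|^2 = \scal^2/n$ at every point, which means $(M,g)$ is Einstein. In that case $\Ric = \frac{\scal}{n}\Id$ with $\scal$ constant, so $\nabla_X\psi = \frac{\mu\scal}{n}X\cdot\psi$ and $\psi$ is a Killing spinor with nonzero constant. Conversely, if $\psi$ is Killing then $M$ is Einstein, as recalled in Section~\ref{sec:GKS}.

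There is no serious obstacle. The points needing care are the division step, since the target inequality is stated with $\scal$ in the denominator, and remembering that Einstein with $\scal \neq 0$ indeed gives a non-parallel Killing spinor.
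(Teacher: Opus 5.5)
Your proposal is correct and follows essentially the same route as the paper. Constancy of $\scal$ comes from Corollary~\ref{coro:scal_is_constant}, and the case $\scal=0$ from identity (3) of Proposition~\ref{prop:Basic_properties_of_RKS}; the inequality and its equality case then follow from Cauchy--Schwarz $\scal^2\le n|\Ric|^2$ combined with $\mu^2(\scal^2-|\Ric|^2)=\frac14\scal$, dividing by $\scal^2>0$.
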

\begin{proof}
  The constancy of $\scal$ follows from Corollary~\ref{coro:scal_is_constant}.
  When $\scal = 0$, the third equation in Proposition \ref{prop:Basic_properties_of_RKS} implies that $|\Ric|^2 = 0$. Hence, $(M, g)$ is Ricci-flat and the Ricci Killing spinor is parallel.
  When $\scal \neq 0$, by the Cauchy--Schwarz inequality and the third equation in Proposition \ref{prop:Basic_properties_of_RKS}, we have
  \begin{align*}
    \scal^2 \leq n \tr(\Ric^2) = n|\Ric|^2 = n\left(\scal^2 - \frac{\scal}{4\mu^2}\right).
  \end{align*}
  Dividing both sides by $\scal^2 > 0$ and rearranging this inequality, we obtain the inequality \eqref{eq:ineq_of_Killing_number}.
Equality in \eqref{eq:ineq_of_Killing_number} holds if and only if equality holds in the Cauchy--Schwarz inequality, namely, $\Ric = \frac{\scal}{n}\Id$. Since $\scal$ is constant by Corollary \ref{coro:scal_is_constant}, this is equivalent to $(M, g)$ being Einstein.
\end{proof}
\begin{rema}
  As in the case of Killing spinors, the constant $\mu$ in the definition of Ricci Killing spinors can be normalized to $\pm \frac{1}{2}$ by rescaling the metric.
  Under the rescaling and assumption that $\scal > 0$, the inequality \eqref{eq:ineq_of_Killing_number} can be rewritten as $\scal \geq  n/(n-1)$.
\end{rema}

As seen in Section \ref{sec:GKS}, a Riemannian spin manifold admitting a generalized Killing spinor may be locally reducible. 
Proposition \ref{prop:GKS_on_product_manifolds} is also true for Ricci Killing spinors.
Indeed, since a Riemannian spin manifold admitting a parallel spinor is Ricci-flat, the Ricci tensor of the product manifold $M \times N$ in Proposition \ref{prop:GKS_on_product_manifolds} is given by $\Ric(X + Y) = \Ric_M(X)$ for $X \in TM$ and $Y \in TN$.
In particular, if $M$ admits a real Killing spinor, then $M \times N$ admits a Ricci Killing spinor which is not a Killing spinor.
\begin{prop}\label{prop:RKS_on_product_manifolds}
  Let $M$ and $N$ be Riemannian spin manifolds, with $M$ of even dimension.
  If $M$ admits a Ricci Killing spinor and $N$ admits a parallel spinor, then the product manifold $M \times N$ admits a Ricci Killing spinor.
\end{prop}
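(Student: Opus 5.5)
The plan is to reuse the construction in the proof of Proposition \ref{prop:GKS_on_product_manifolds} verbatim and only check that the endomorphism produced there is $\mu$ times the Ricci endomorphism of the product. Let $\psi_M$ be a Ricci Killing spinor on $M$ with constant $\mu$, so $\nabla^M_X\psi_M = \mu\Ric_M(X)\cdot\psi_M$, and let $\psi_N$ be a parallel spinor on $N$. Since $\dim M$ is even, $\Sigma(M\times N)\cong\Sigma M\otimes\Sigma N$, and we set $\psi=\psi_M\otimes\psi_N$. This spinor is non-zero: by Lemma \ref{lemm:GKS_has_no_zeroes} both factors are nowhere vanishing, and in any case $\psi_M$ and $\psi_N$ are non-zero sections, so their tensor product is non-zero.

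Next, apply Proposition \ref{prop:GKS_on_product_manifolds} with $A=\mu\Ric_M$. It shows that $\psi$ is a generalized Killing spinor on $M\times N$ with endomorphism $\tilde A(X+Y)=\mu\Ric_M(X)$ for $X\in TM$ and $Y\in TN$. The Ricci tensor of a Riemannian product is the direct sum of the Ricci tensors of the factors, so $\Ric_{M\times N}(X+Y)=\Ric_M(X)+\Ric_N(Y)$. Because $N$ carries a parallel spinor, it is Ricci-flat, which gives $\Ric_N=0$. Hence $\Ric_{M\times N}(X+Y)=\Ric_M(X)$, and therefore $\tilde A=\mu\Ric_{M\times N}$.

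It follows that $\nabla_Z\psi=\mu\Ric_{M\times N}(Z)\cdot\psi$ for all $Z\in T(M\times N)$, with the same non-zero constant $\mu$, so $\psi$ is a Ricci Killing spinor. There is no real obstacle. The only points needing care are the standard splitting of the Ricci tensor of a product and the Ricci-flatness of manifolds carrying parallel spinors, both already recalled in Section \ref{sec:GKS}.
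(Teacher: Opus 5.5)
Your proposal is correct and follows the paper's argument: apply Proposition~\ref{prop:GKS_on_product_manifolds} with $A=\mu\Ric_M$, then use that $N$ is Ricci-flat, so that $\Ric_{M\times N}(X+Y)=\Ric_M(X)$ and hence $\tilde A=\mu\Ric_{M\times N}$. Your additional check that $\psi_M\otimes\psi_N$ is non-zero is a harmless detail that the paper leaves implicit.
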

On the other hand, if a manifold admitting a Ricci Killing spinor is locally reducible, then one of the factors must be Ricci-flat.
\begin{theo}\label{theo:irreducibility_for_RKS}
Let $(M, g)$ be a Riemannian spin manifold. Suppose that there exists an open subset $U \subset M$ isometric to a Riemannian product $U_1 \times U_2$ of two non-Ricci-flat Riemannian manifolds. Then $(M, g)$ does not admit a Ricci Killing spinor.
\end{theo}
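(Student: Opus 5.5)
The plan is to apply the curvature identity of Proposition~\ref{prop:curv_to_GKS} with $A=\mu\Ric$ to \emph{mixed} pairs of vectors, one tangent to each factor of the product. For such pairs the left-hand side vanishes, and so does the $d^{\nabla}A$ term. What remains forces the two Ricci tensors to be incompatible.

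Suppose, for contradiction, that $\psi$ is a Ricci Killing spinor on $M$ with constant $\mu\neq 0$, and restrict everything to $U\cong U_1\times U_2$. Fix a point $p=(p_1,p_2)$ and vectors $X\in T_{p_1}U_1$, $Y\in T_{p_2}U_2$. The argument has three steps.

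First, on a Riemannian product the curvature operator has no mixed components. Hence $R(X\wedge Y)=0$, and the left-hand side of Proposition~\ref{prop:curv_to_GKS} vanishes.

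Second, the Ricci endomorphism splits as $\Ric=\Ric_1\oplus\Ric_2$, where $\Ric_j$ depends only on the $U_j$-coordinates. The Levi-Civita connection is the product connection. We extend $Y$ to a vector field on $U_2$ and lift it, so that $\nabla_XY=0$. Then
\[(\nabla_X\Ric)(Y)=\nabla_X(\Ric_2Y)-\Ric_2(\nabla_XY)=0,\]
because $\Ric_2 Y$ is a lifted field from $U_2$. Symmetrically, $(\nabla_Y\Ric)(X)=0$, so $(d^{\nabla}A)(X,Y)=0$. This is the only step requiring any care: one must check that the mixed covariant derivatives of $\Ric$ vanish identically on a product. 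I expect it to be routine, provided one is careful that the $U_j$ need not be spin individually. The identity of Proposition~\ref{prop:curv_to_GKS} is stated on $M$, so no splitting of the spinor bundle is needed.

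Third, Proposition~\ref{prop:curv_to_GKS} now reduces to
\[2\mu^2\,\Ric_1(X)\wedge\Ric_2(Y)\cdot\psi=0 .\]
Set $u=\Ric_1(X)\in TU_1$ and $v=\Ric_2(Y)\in TU_2$. These are orthogonal, so $u\wedge v\cdot\psi=u\cdot v\cdot\psi$. If both are non-zero, then $(u\cdot v)^2=-|u|^2|v|^2\neq 0$, so Clifford multiplication by $u\cdot v$ is invertible. Since $\psi_p\neq 0$ by Lemma~\ref{lemm:GKS_has_no_zeroes} and $\mu\neq0$, this is impossible. Hence, at every point $(p_1,p_2)$, either $\Ric_1(p_1)=0$ or $\Ric_2(p_2)=0$.

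To conclude, use that neither factor is Ricci-flat. Choose $p_1\in U_1$ with $\Ric_1(p_1)\neq0$ and $p_2\in U_2$ with $\Ric_2(p_2)\neq0$. At the point $(p_1,p_2)\in U$ this contradicts the dichotomy just established. Therefore $M$ admits no Ricci Killing spinor.
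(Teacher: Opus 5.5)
Your proposal is correct and follows essentially the same route as the paper. Both restrict the curvature identity of Proposition~\ref{prop:curv_to_GKS} with $A=\mu\Ric$ to mixed pairs, use the product structure to kill the curvature and $d^{\nabla}A$ terms, and obtain $\Ric(X)\cdot\Ric(Y)\cdot\psi=0$, which contradicts Lemma~\ref{lemm:GKS_has_no_zeroes} at a point where both Ricci tensors are non-zero; your invertibility argument via $(u\cdot v)^2=-|u|^2|v|^2$ spells out a step the paper leaves implicit.
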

\begin{proof}
  If $(M, g)$ admits a Ricci Killing spinor $\psi$, then by Proposition \ref{prop:curv_to_GKS}, we have
  \[\frac{1}{2}R(X \wedge Y) \cdot \psi = \mu((\nabla_X \Ric)(Y) - (\nabla_Y \Ric)(X)) \cdot \psi - 2\mu^2(\Ric(X) \wedge \Ric(Y))\cdot\psi.\]
  For all $X \in \Gamma(TU_1)$ and $Y \in \Gamma(TU_2)$, the left-hand side and the first term on the right-hand side vanish because $R(X,Y) = 0$ and $(\nabla_X \Ric)(Y) = 0 = (\nabla_Y \Ric)(X)$.
  Combining this with $g(\Ric(X), \Ric(Y)) = 0$, we have
  \begin{equation}\label{eq:irreducibility_RKS}0 = (\Ric(X) \wedge \Ric(Y))\cdot\psi = \Ric(X) \cdot \Ric(Y)\cdot\psi.
  \end{equation}
  Since $U_1$ and $U_2$ are non-Ricci-flat, there exists a point $(p,q) \in U_1 \times U_2$ and vectors $X_p \in T_pU_1, Y_q \in T_qU_2$ such that $\Ric(X_p) \neq 0$ and $\Ric(Y_q) \neq 0$.
  Hence, by \eqref{eq:irreducibility_RKS}, we have $\psi = 0$ at $(p,q)$, which contradicts the fact that $\psi$ has no zeroes (Lemma \ref{lemm:GKS_has_no_zeroes}).
\end{proof}

\section{Ricci Killing spinors on Sasakian \texorpdfstring{$\eta$}{eta}-Einstein manifolds}\label{sec:RKS_on_Sasakian}
\subsection{Sasakian quasi-Killing spinors on Sasakian \texorpdfstring{$\eta$}{eta}-Einstein manifolds}\label{sec:SqKS_on_eta_Einstein}
We first recall some basic facts about Sasakian $\eta$-Einstein manifolds and Sasakian quasi-Killing spinors.
We derive restrictions on the possible types of Sasakian quasi-Killing spinors, which will be used in Section \ref{sec:Rigidity_of_eta_Einstein_admitting_RKS} to study Ricci Killing spinors on Sasakian $\eta$-Einstein manifolds.
\begin{defi}
  Let $(M^{2m+1},g)\,\,(m \geq 1)$ be a Riemannian manifold.
  A \textit{Sasakian structure} on $M$ is a triple $(\xi, \eta, \varphi)$ consisting of a vector field $\xi$, a 1-form $\eta$, and a (1,1)-tensor field $\varphi$ such that the following conditions hold:
  \begin{enumerate}
    \item $\eta(\xi) = 1$,
    \item $\varphi^2 = -\Id + \eta \otimes \xi$,
    \item $g(\varphi(X), \varphi(Y)) = g(X, Y) - \eta(X)\eta(Y)$  \quad for all $X, Y \in TM$,
    \item $(\nabla_X \varphi)(Y) = g(X, Y)\xi - \eta(Y)X$ \quad for all $X, Y \in TM$.
  \end{enumerate}
  A Riemannian manifold $(M, g)$ equipped with a Sasakian structure is called a \textit{Sasakian manifold}.
  In particular, $(M, g, \xi, \eta, \varphi)$ is called a \textit{Sasakian $\eta$-Einstein manifold} if there exist constants $\lambda, \nu \in \mathbb{R}$ such that the Ricci tensor satisfies $\Ric = \lambda g + \nu \eta \otimes \eta$.
\end{defi}

Note that if $M$ is a Sasakian $\eta$-Einstein manifold, the constants $\lambda$ and $\nu$ satisfy the relation $\lambda + \nu = 2m$, and the scalar curvature is given by $\scal = 2m(\lambda + 1)$.

\begin{defi}\label{defi:Sasaki_Eintein_and_trans_Ricciflat}
  Let $(M, g, \xi, \eta, \varphi)$ be a Sasakian $\eta$-Einstein manifold.
  $M$ is called \emph{Sasakian-Einstein} if $\Ric = 2m g$, and \emph{transversely Ricci-flat} if $\Ric = -2g + (2m+2)\eta \otimes \eta$.
\end{defi}

\begin{rema}\label{rema:trans_Ricciflat_and_null}
  Recall that a Sasakian structure is called null if the basic first Chern class vanishes, i.e., $c_1^B(\mathcal F_\xi)=0$ (see \cite{BG08}*{Definition~7.5.24}).
  Transverse Ricci-flatness is closely related to the notion of nullity of Sasakian structures.
  To see this, let $\mathcal{D}=\ker\eta$, and denote by $g^T=g|_{\mathcal{D}}$ the transverse metric and by $\Ric^T$ and $\rho^T$ the corresponding transverse Ricci tensor and Ricci form, respectively.
  If $\Ric=\lambda g+\nu\eta\otimes\eta$, then $\Ric^T=(\lambda+2)g^T$ by \cite{BG08}*{Theorem~7.3.12}, and hence $\rho^T = \frac{\lambda + 2}{2}d\eta$.
  Therefore, since $2\pi c_1^B(\mathcal{F}_\xi)=[\rho^T]_B= \frac{\lambda + 2}{2}[d\eta]_B$ by \cite{BG08}*{Lemma~7.5.18}, if $M$ is transversely Ricci-flat, then $c_1^B(\mathcal{F}_\xi)=0$, i.e., the Sasakian structure is null.
  For compact Sasakian $\eta$-Einstein manifolds, the converse is also true since $[d\eta]_B\neq0$ by \cite{BG08}*{Proposition~7.2.3(iii)}.
\end{rema}

The spinor bundle $\Sigma M$ of a Sasakian spin manifold $(M, g, \xi, \eta, \varphi)$ can be decomposed into a direct sum of subbundles $\Sigma_r$ ($r = 0, 1, \cdots, m$) by the action of the fundamental 2-form $\Phi(X,Y) = g(X, \varphi(Y))$ on $\Sigma M$.
We orient $M$ so that any local orthonormal frame $(e_1,\varphi(e_1),\ldots,e_m,\varphi(e_m),\xi)$, where $e_1,\ldots,e_m\in\ker\eta$, is positively oriented.

\begin{lemm}[{\cite{KF00}*{Lemma~6.2}, reformulated in our convention}]
  \label{lemm:spinor_bundle_decomposition_on_Sasakian_manifold}
  Let $(M^{2m+1},g,\xi, \eta, \varphi)$ be a Sasakian spin manifold, and let $\Phi\in\Omega^2(M)$ be its fundamental $2$-form.
  Then the spinor bundle $\Sigma M$ decomposes into a direct sum of subbundles $\Sigma_r$ ($r = 0, 1, \cdots, m$):
  \[\Phi \cdot |_{\Sigma_r} = i(m-2r)\Id_{\Sigma_r}, \quad \mathrm{rank}_{\C}\Sigma_r = \binom{m}{r} \quad (r = 0, 1, \cdots, m)\]
  \[\xi \cdot |_{\Sigma_0 \oplus \Sigma_2 \oplus \cdots} = -i \Id, \quad \xi \cdot |_{\Sigma_1 \oplus \Sigma_3 \oplus \cdots} = i \Id.\]
  Furthermore, the subbundles $\Sigma_0$ and $\Sigma_m$ can be expressed as
  \begin{align*}
    \Sigma_0&=\left\{\psi\in\Sigma M \mid \varphi(X)\cdot\psi-iX\cdot\psi+\eta(X)\psi=0\quad(\forall X\in TM)\right\},\\
    \Sigma_m&=\left\{\psi\in\Sigma M \mid \varphi(X)\cdot\psi+iX\cdot\psi+(-1)^{m+1}\eta(X)\psi=0\quad(\forall X\in TM)\right\}.
  \end{align*}
\end{lemm}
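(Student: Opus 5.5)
The strategy is to work pointwise in a local orthonormal frame adapted to the Sasakian structure, where $\Phi$ splits into $m$ commuting two-planes. We then read off the eigenvalues of $\Phi\cdot$ and of $\xi\cdot$ from the standard model of $\Sigma_{2m+1}$. Choose a local positively oriented orthonormal frame $(e_1,\varphi(e_1),\ldots,e_m,\varphi(e_m),\xi)$. Since $g(e_j,\varphi(\varphi e_j))=-1$, we have
\[
\Phi=-\sum_{j=1}^m e_j\wedge\varphi(e_j),
\]
so as a Clifford element $\Phi=-\sum_j e_j\cdot\varphi(e_j)$. Each $J_j:=e_j\cdot\varphi(e_j)$ satisfies $J_j^2=-1$, and distinct $J_j$ commute because they are products of disjoint pairs of basis vectors. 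Hence the $J_j$ are simultaneously diagonalizable with eigenvalues $\pm i$. The fiber $\Sigma_p\cong\C^{2^m}$ therefore splits into joint eigenlines indexed by sign vectors $(\epsilon_1,\ldots,\epsilon_m)$, with $J_j=i\epsilon_j$. In the standard tensor-product model each sign pattern occurs exactly once. Setting $r=\#\{j:\epsilon_j=+1\}$ (or $-1$, depending on sign conventions), the element $\Phi$ acts by $-i\sum_j\epsilon_j=i(m-2r)$, and the multiplicity of this eigenvalue is $\binom{m}{r}$. We define $\Sigma_r$ as this eigenbundle. It is a smooth subbundle because it is an eigenbundle of the globally defined parallel-type endomorphism $\Phi\cdot$ with locally constant eigenvalues.

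For $\xi$, we use the volume convention $\omega^{\C}=i^{m+1}e_1\varphi(e_1)\cdots e_m\varphi(e_m)\xi=\Id$. Since $\xi$ commutes with each $J_j$ and $\xi^2=-1$, multiplying on the left by $\xi$ gives
\[
\xi\cdot=-i^{m+1}\prod_j J_j\cdot=-i^{m+1}\prod_j(i\epsilon_j)=-i^{2m+1}\prod_j\epsilon_j=-(-1)^m i\prod_j\epsilon_j .
\]
Write $\prod_j\epsilon_j=(-1)^{m-r}$ or $(-1)^r$ according to how $r$ counts signs. The goal is that $\xi=-i$ on $\Sigma_r$ for even $r$ and $+i$ for odd $r$. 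This sign bookkeeping fixes which convention for $r$ is used, and it must be made consistent with $\Phi=i(m-2r)$. I expect this to be the main obstacle. I would settle it by checking the case $m=1$ directly with explicit Pauli matrices, and then arguing inductively via the tensor-product model.

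For the characterizations of $\Sigma_0$ and $\Sigma_m$, take $\psi\in\Sigma_0$, so every $J_j$ acts by a fixed sign. With the correct sign, $J_j\psi=-i\psi$, that is $\varphi(e_j)\psi=-i\cdot(-e_j)\psi\cdot(\ldots)$; more precisely, multiplying $e_j\varphi(e_j)\psi=\sigma i\psi$ by $e_j$ gives $\varphi(e_j)\psi=-\sigma i e_j\psi$. This yields the relation $\varphi(X)\psi-iX\psi=0$ for $X=e_j$. Applying $\varphi$ gives the same relation for $X=\varphi(e_j)$, since $\varphi^2=-\Id$ on $\ker\eta$. For $X=\xi$ the relation reads $-i\xi\psi+\psi=0$, i.e. $\xi\psi=-i\psi$, which is exactly the $\xi$-eigenvalue on $\Sigma_0$. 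By linearity the relation holds for all $X$.

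Conversely, a spinor satisfying the relation for all $X$ has each $J_j$ acting by the prescribed sign, so it lies in the joint eigenline $\Sigma_0$. The argument for $\Sigma_m$ is identical with all $\epsilon_j$ reversed. In that case $\xi$ acts by $(-1)^{m+1}i$ up to the computed sign, which explains the term $(-1)^{m+1}\eta(X)$.
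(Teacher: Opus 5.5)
Your argument is correct. The paper gives no proof of its own. It quotes Lemma~6.2 of \cite{KF00} and transports it through the convention change of Remark~\ref{rema:convention_KF}: the volume element acts as $+\Id$ rather than $-\Id$, which reverses the sign of Clifford multiplication by vectors (hence of $\xi\cdot$) but not of $\Phi\cdot$, and the $\Sigma_r$ are indexed in reverse. You instead diagonalize the commuting operators $J_j=e_j\cdot\varphi(e_j)\cdot$ (with $J_j^2=-1$) in an adapted frame and compute directly in the paper's convention. This is longer than a citation, but it is self-contained. It also independently checks the $\xi$-eigenvalues and the $(-1)^{m+1}$ term, which are exactly where a convention translation can go wrong.

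The step you single out as the main obstacle is already settled by your own formulas; no convention is left to choose. On the joint eigenline of $(\epsilon_1,\ldots,\epsilon_m)$ you have $\Phi\cdot=-i\sum_j\epsilon_j$, so the requirement $\Phi|_{\Sigma_r}=i(m-2r)$ forces $r=|\{j:\epsilon_j=+1\}|$. Hence $\prod_j\epsilon_j=(-1)^{m-r}$. Your expression then gives $\xi\cdot=-(-1)^m i\,(-1)^{m-r}=-(-1)^r i$, i.e.\ $-i$ for even $r$ and $+i$ for odd $r$, so no Pauli-matrix check or induction is needed. In particular all $J_j=-i$ on $\Sigma_0$ and all $J_j=+i$ on $\Sigma_m$. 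Your derivation of the two characterizations therefore runs with $\sigma=-1$ and $\sigma=+1$ respectively, and the $X=\xi$ components reproduce $\xi=-i$ on $\Sigma_0$ and $\xi=(-1)^{m+1}i$ on $\Sigma_m$.

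Three small repairs are needed:
\begin{itemize}
\item $\Phi$ is not parallel; condition (4) of the definition gives $\nabla_X\Phi=\xi\wedge X$. Smoothness of $\Sigma_r$ only needs that $\Phi\cdot$ is a smooth skew-Hermitian endomorphism with constant spectrum.
\item One cannot ``apply $\varphi$'' to a spinor identity. What is true is that the relation at $X=\varphi(e_j)$ is $-i$ (resp.\ $+i$) times the relation at $X=e_j$ in the $\Sigma_0$ (resp.\ $\Sigma_m$) case.
\item The claim that each sign pattern occurs exactly once needs no model. Since $e_k\cdot$ anticommutes with $J_k$ and commutes with $J_j$ for $j\neq k$, it maps each joint eigenspace isomorphically onto the one with $\epsilon_k$ flipped. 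Hence all $2^m$ joint eigenspaces have equal dimension, so each is a line.
\end{itemize}
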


\begin{rema}\label{rema:convention_KF}
The Clifford multiplication used in \cite{KF00} differs from ours in odd dimensions: the complex volume element acts as $-\Id$ in \cite{KF00}, whereas it acts as $+\Id$ by our convention.
Accordingly, the results from \cite{KF00} are reformulated using our Clifford multiplication.
We also adopt the reverse indexing of the subbundles $\Sigma_r$ compared with that in \cite{KF00}, which leads to simpler formulas in our convention.
\end{rema}

Kim and Friedrich introduced the notion of Sasakian quasi-Killing spinors, which form a special class of generalized Killing spinors on Sasakian manifolds.
We briefly review Sasakian quasi-Killing spinors and their properties, following \cite{KF00}.

\begin{defi}
  A non-zero spinor field $\psi \in \Gamma(\Sigma M)$ on a Sasakian spin manifold $(M, g, \xi, \eta, \varphi)$ is called a \textit{Sasakian quasi-Killing spinor of type $(a,b)$} if it satisfies
  \[ \nabla_X \psi = a X \cdot \psi + b \eta(X)\xi \cdot \psi \quad \text{for all } X \in TM, \]
  where $a, b \in \mathbb{R}$ are constants.
\end{defi}

The following identities, due to Kim and Friedrich, relate a Sasakian quasi-Killing spinor to the Ricci curvature and the fundamental $2$-form. They will be used repeatedly in the sequel.
These identities retain the same form under the change of Clifford multiplication described in Remark~\ref{rema:convention_KF}.

\begin{prop}[{\cite{KF00}*{Lemma~6.4}}]\label{prop:fundamental_identities_for_SqK_spinors}
  Let $\psi \in \Gamma(\Sigma M)$ be a Sasakian quasi-Killing spinor of type $(a,b)$. 
  Then the following identities hold:
  \begin{enumerate}
    \item $\Ric(X) \cdot \psi = (8ma^2 + 4ab)X \cdot \psi + 2b\varphi(X) \cdot \xi \cdot \psi + (2m-8ma^2-4ab)\eta(X)\xi\cdot\psi$
    \item $2b\Phi\cdot\psi = m(1-4a^2-4ab)\xi \cdot \psi$
  \end{enumerate}
  In particular, the scalar curvature $\scal$ and the norm of the Ricci tensor $|\Ric|$ are constant and given by
  \begin{align}
    \scal &= 8m(2m+1)a^2 + 16mab \label{eq:sqk_scal}\\
    |\Ric|^2 &= (8ma^2+4ab)(16m^2a^2+16ma^2+24mab-4m)+8mb^2+4m^2 \notag
  \end{align}
\end{prop}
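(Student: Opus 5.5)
We apply Proposition~\ref{prop:basic_properties_of_GKS} to the symmetric endomorphism $A = a\Id + b\,\eta\otimes\xi$, for which $\nabla_X\psi = A(X)\cdot\psi$. All ingredients are tensorial, apart from $\nabla A$. Using the Sasakian identities $\nabla_X\xi = -\varphi(X)$ and $(\nabla_X\eta)(Y) = -g(\varphi X,Y)$, which follow from condition~(4), we get
\[
(\nabla_X A)(Y) = -b\bigl(g(\varphi X,Y)\xi + \eta(Y)\varphi(X)\bigr).
\]
In particular $\tr A = (2m+1)a+b$ and $|A|^2=(2m+1)a^2+2ab+b^2$. Item~(3) of Proposition~\ref{prop:basic_properties_of_GKS} then gives immediately
\[
\tfrac14\scal = a^2\bigl((2m+1)^2-(2m+1)\bigr) + 2ab(2m+1-1) = 2m(2m+1)a^2+4mab,
\]
which is \eqref{eq:sqk_scal}.

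For identity~(1) we use item~(1) of Proposition~\ref{prop:basic_properties_of_GKS}, solved for $\Ric(X)\cdot\psi$:
\[
\tfrac12\Ric(X)\cdot\psi = \sum_i e_i\wedge(\nabla_{e_i}A)(X)\cdot\psi - 2A^2(X)\cdot\psi + 2\tr(A)A(X)\cdot\psi.
\]
The left side is only known after Clifford multiplication, but that is exactly what is claimed. We compute $\sum_i e_i\wedge(\nabla_{e_i}A)(X) = -b\bigl(\sum_i g(\varphi e_i,X)e_i\wedge\xi + \eta(X)\sum_i e_i\wedge\varphi e_i\bigr) = -b\bigl(-\varphi X\wedge\xi + 2\eta(X)\Phi\bigr)$, up to the sign convention for $\Phi$. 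Since $\varphi X\perp\xi$, we have $\varphi X\wedge\xi\cdot\psi = \varphi X\cdot\xi\cdot\psi$. Collecting the $X$ and $\eta(X)\xi$ terms from $-2A^2+2\tr(A)A$ yields the coefficients $8ma^2+4ab$ and a multiple of $\eta(X)\xi$, with an extra term $\eta(X)\Phi\cdot\psi$ remaining.

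We therefore need identity~(2) first, to convert $\Phi\cdot\psi$ into $\xi\cdot\psi$. For this we plug $X=\xi$ into the same formula, or better, compute the curvature directly via Proposition~\ref{prop:curv_to_GKS}. On a Sasakian manifold $R(X,\xi)Y = g(\xi,Y)X - g(X,Y)\xi$ and $\Ric(\xi)=2m\xi$. Taking $X=\xi$ in the item~(1) equation gives
\[
m\xi\cdot\psi = -2b\Phi\cdot\psi + \bigl(\text{explicit multiple of } \xi\cdot\psi\bigr),
\]
and rearranging gives $2b\Phi\cdot\psi = m(1-4a^2-4ab)\xi\cdot\psi$. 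Substituting this back into the general-$X$ equation eliminates $\Phi$ and produces the $\eta(X)\xi$ coefficient $2m-8ma^2-4ab$.

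The constancy statements follow at once. Since $\scal = \sum_i g(\Ric e_i,e_i)$ can be recovered via the real part of $\langle \Ric(e_i)\cdot\psi, e_i\cdot\psi\rangle/|\psi|^2$ (using Lemma~\ref{lemm:GKS_has_no_zeroes}), identity~(1) gives $\scal$. Similarly, $|\Ric|^2 = \sum_i|\Ric(e_i)\cdot\psi|^2/|\psi|^2$ can be computed from identity~(1) using orthogonality relations such as $\mathrm{Re}\langle X\cdot\psi,\varphi X\cdot\xi\cdot\psi\rangle$, evaluated with identity~(2). The main obstacle is bookkeeping: the signs of $\Phi$ and the orientation convention, the sign in the $\varphi X\wedge\xi$ term, and the evaluation of these cross terms in $|\Ric|^2$. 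I would fix conventions by checking against the Einstein case $b=0$, $a=\pm\frac12$, where $\Ric=2m\Id$.
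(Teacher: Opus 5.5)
Your route is sound, and it proves more than the paper does. The paper gives no argument for this proposition: it imports the identities from \cite{KF00}*{Lemma~6.4} and only asserts that they keep their form under the change of Clifford multiplication in Remark~\ref{rema:convention_KF}. Feeding $A=a\Id+b\,\eta\otimes\xi$ into Proposition~\ref{prop:basic_properties_of_GKS} proves them directly in the paper's conventions, for an arbitrary Sasakian manifold. Item~(3) gives \eqref{eq:sqk_scal} at once. Item~(1) uses $-2A^2(X)+2\tr(A)A(X)=(4ma^2+2ab)X+2(2m-1)ab\,\eta(X)\xi$; evaluated at $X=\xi$ together with $\Ric(\xi)=2m\xi$, it yields identity~(2). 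Substituting~(2) back into the term $4b\,\eta(X)\Phi\cdot\psi$ then produces the coefficient $2m-8ma^2-4ab$ of $\eta(X)\xi$, as claimed.

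You need to settle the sign of the $\Phi$-term rather than leave it to conventions. Your proposed check against the Einstein case cannot detect it, because that term carries the factor $b$. With the paper's $\Phi(X,Y)=g(X,\varphi Y)$ and an adapted frame $(f_1,\varphi f_1,\dots,f_m,\varphi f_m,\xi)$, one has $\Phi=-\sum_k f_k\wedge\varphi f_k$, hence $\sum_i e_i\wedge\varphi(e_i)=-2\Phi$ and
\[
\sum_i e_i\wedge(\nabla_{e_i}A)(X)=b\,\varphi(X)\wedge\xi+2b\,\eta(X)\Phi .
\]
At $X=\xi$ this gives $m\,\xi\cdot\psi=2b\,\Phi\cdot\psi+4m(a^2+ab)\,\xi\cdot\psi$, which is exactly~(2). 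With the sign you wrote, $-2b\,\Phi\cdot\psi$, the same rearrangement gives~(2) with the opposite sign, so your displayed line does not yield your stated conclusion.

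The sign matters. For $m=1$, Lemma~\ref{lemm:spinor_bundle_decomposition_on_Sasakian_manifold} gives $\Phi\cdot=-\xi\cdot$ on all of $\Sigma M$. Then the correct~(2) reads $(1-2a)(1+2a+2b)=0$, which is the dichotomy $a=\frac12$ or $a+b=-\frac12$ of Corollary~\ref{coro:sqk_on_etaEinstein}. The other sign would give $a=-\frac12$ or $a+b=\frac12$ instead.

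For $|\Ric|^2$, note that the cross term is not itself constant. For unit $X\in\ker\eta$ and $|\psi|=1$, identity~(1) gives $2b\,\mathrm{Re}\langle X\cdot\psi,\varphi(X)\cdot\xi\cdot\psi\rangle=g(\Ric(X),X)-\alpha$, where $\alpha:=8ma^2+4ab$. Only its sum over a frame of $\ker\eta$ is determined. The cleanest way to close the computation is to take norms in identity~(1): this gives $|\Ric(X)-\alpha X|=2|b|\,|X|$ for $X\in\ker\eta$. Using $\sum_{i\le 2m}g(\Ric(e_i),e_i)=\scal-2m$ and $|\Ric(\xi)|^2=4m^2$, you get
\[
|\Ric|^2=4m^2+8mb^2+2m\alpha^2+2\alpha\bigl(\scal-2m-2m\alpha\bigr).
\]
Inserting \eqref{eq:sqk_scal} turns this into exactly the stated formula.
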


By definition, Ricci Killing spinors on Sasakian $\eta$-Einstein manifolds ($\Ric = \lambda g + \nu \eta \otimes \eta$) are Sasakian quasi-Killing spinors of type $(\mu\lambda, \mu\nu)$, where $\mu$ is a constant associated with the Ricci Killing spinor.
So, we investigate the properties of Sasakian quasi-Killing spinors on Sasakian $\eta$-Einstein manifolds.

\begin{coro}\label{coro:sqk_on_etaEinstein}
  Let $(M^{2m+1}, g, \xi, \eta, \varphi)$ be a Sasakian $\eta$-Einstein spin manifold with $\Ric = \lambda g + \nu \eta \otimes \eta$. If $M$ admits a Sasakian quasi-Killing spinor $\psi$ of type $(a,b)$, then the constants $a,b$ satisfy the following conditions:
  \begin{enumerate}
    \item when $m = 1$, $(a, b) = \left(\frac{1}{2}, \frac{\lambda - 2}{4}\right)$ or $(a, b) = \left(\frac{1 \pm \sqrt{\lambda + 2}}{2}, \frac{-2 \mp \sqrt{\lambda + 2}}{2}\right)$,
    \item when $m \geq 2$, $a = \pm \frac{1}{2}$.
  \end{enumerate}
\end{coro}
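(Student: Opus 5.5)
The plan is to combine the two identities of Proposition~\ref{prop:fundamental_identities_for_SqK_spinors} with the $\eta$-Einstein condition and the algebra of Clifford multiplication on $\mathcal D=\ker\eta$. Set $c=8ma^2+4ab$. Since $\Ric(X)=\lambda X+\nu\eta(X)\xi$, the first identity gives two things.

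\begin{itemize}
\item For $X=\xi$ it gives $\lambda+\nu=2m$, which is automatic.
\item For $X\in\mathcal D$ it gives
\[
(\lambda-c)\,X\cdot\psi=2b\,\varphi(X)\cdot\xi\cdot\psi .
\]
\end{itemize}

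Comparing $\scal=2m(\lambda+1)$ with \eqref{eq:sqk_scal} gives a second relation,
\[
\lambda+1=4(2m+1)a^2+8ab .
\]

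\textbf{Case $b=0$.} The displayed relation on $\mathcal D$ forces $\lambda=c=8ma^2$, since $X\cdot\psi\neq0$ by Lemma~\ref{lemm:GKS_has_no_zeroes}. The scalar relation then reads $8ma^2+1=(8m+4)a^2$, so $4a^2=1$ and $a=\pm\frac12$. For $m=1$ this forces $\lambda=2$. The pair $(\frac12,0)$ is the first family, and $(-\frac12,0)$ is the member of the second family with $\lambda=2$ and the lower signs. This case is consistent with both items of the statement.

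\textbf{Case $b\neq0$, first step.} Put $k=(\lambda-c)/2b$, so that $\varphi(X)\cdot\xi\cdot\psi=kX\cdot\psi$ for $X\in\mathcal D$.
\begin{itemize}
\item Replacing $X$ by $\varphi X$ and using $\varphi^2=-\Id$ on $\mathcal D$ gives $-X\cdot\xi\cdot\psi=k\,\varphi(X)\cdot\psi$.
\item Multiplying the original relation on the left by $\xi$, and using that $\xi$ anticommutes with $\mathcal D$ and $\xi\cdot\xi=-1$, gives $\varphi(X)\cdot\psi=-k\,X\cdot\xi\cdot\psi$.
\item Combining the two yields $(k^2-1)X\cdot\xi\cdot\psi=0$, hence $k=\pm1$, i.e.\ $\lambda-c=\pm2b$.
\end{itemize}

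\textbf{Case $b\neq0$, second step.} Write $\Phi\cdot\psi$ as $\pm\frac12\sum_{j} e_j\cdot\varphi(e_j)\cdot\psi$ over an orthonormal basis of $\mathcal D$; the sign is fixed by the orientation convention. Substituting $\varphi(e_j)\cdot\psi=-k\,e_j\cdot\xi\cdot\psi$ gives $\Phi\cdot\psi=\pm mk\,\xi\cdot\psi$. By Lemma~\ref{lemm:spinor_bundle_decomposition_on_Sasakian_manifold} this forces $\psi$ to be a section of $\Sigma_0\oplus\Sigma_m$. Feeding this into the second identity of Proposition~\ref{prop:fundamental_identities_for_SqK_spinors} yields $1-4a^2-4ab=\pm2b$. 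Together with the scalar relation this reduces the unknowns to a one-parameter family in $(a,b)$ for each $\lambda$.

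\textbf{Case $b\neq0$, $m=1$.} For $m=1$ I expect exactly this algebra to finish the job. Eliminating $b$ between $\lambda+1=12a^2+8ab$ and $1-4a^2-4ab=\pm2b$ gives a quadratic in $a$ whose roots should be $a=\frac{1\pm\sqrt{\lambda+2}}{2}$ with $b=\frac{-2\mp\sqrt{\lambda+2}}{2}$. The remaining sign choice should give back the first family.

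\textbf{Case $b\neq0$, $m\geq2$.} The purely algebraic constraints leave freedom, so an integrability argument is needed. The plan is to differentiate the relation $\varphi(X)\cdot\xi\cdot\psi=kX\cdot\psi$ (equivalently $\varphi(X)\cdot\psi=-kX\cdot\xi\cdot\psi$) along $Y$, using:
\begin{itemize}
\item $(\nabla_Y\varphi)X=g(X,Y)\xi-\eta(X)Y$;
\item $\nabla_Y\xi=-\varphi Y$;
\item $\nabla_Y\psi=aY\cdot\psi+b\eta(Y)\xi\cdot\psi$.
\end{itemize}
Then I would take $Y\in\mathcal D$ orthogonal to both $X$ and $\varphi X$; such $Y$ exists only when $m\ge2$, which is why $m=1$ behaves differently. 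Reducing all products using the relation itself, the resulting identity should take the form $(\text{polynomial in }a,b,k)\,X\cdot Y\cdot\psi=0$. Together with the previous equations this should force $b=0$, or at least $4a^2=1$.

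This differentiation step is the main obstacle. The difficulty is keeping track of Clifford signs so that the coefficient identity comes out cleanly. If that route proves messy, a fallback is to use instead the curvature identity of Proposition~\ref{prop:curv_to_GKS} with $A=a\Id+b\eta\otimes\xi$, evaluated on $X\wedge Y$ for such an orthogonal pair in $\mathcal D$. For this one would use the known Sasakian $\eta$-Einstein curvature on $\mathcal D$-planes that are orthogonal to their $\varphi$-images.
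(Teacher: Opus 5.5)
\textbf{There is a gap in your $m=1$, $b\neq0$ case.} For every $m$, your relation $\lambda-(8ma^2+4ab)=2kb$ combined with the scalar relation gives $4a^2+4ab-1=2kb$. This factors as $(2a-k)(2a+2b+k)=0$.

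For $m=1$ and $k=+1$, this single sign already yields \emph{both} listed families:
\begin{itemize}
\item $a=\frac12$ gives $b=\frac{\lambda-2}{4}$;
\item $a+b=-\frac12$ turns $\lambda+1=12a^2+8ab$ into $4a^2-4a-(\lambda+1)=0$.
\end{itemize}
The first family only seemed to be missing because eliminating $b$ divides by $4a-2$.

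The other sign $k=-1$ does \emph{not} give back the first family. It gives the sign-reversed pairs $(-\frac12,\frac{2-\lambda}{4})$ and $(\frac{-1\pm\sqrt{\lambda+2}}{2},\frac{2\mp\sqrt{\lambda+2}}{2})$. The statement excludes these, and the paper relies on that exclusion later, e.g.\ to rule out type $(-\frac12,1)$ on transversely Ricci-flat $3$-manifolds. You leave $k$, and the sign in $\Phi\cdot\psi=\pm mk\,\xi\cdot\psi$, undetermined, so your identities cannot rule these pairs out.

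The missing input is that for $m=1$ the sign is not free. Lemma~\ref{lemm:spinor_bundle_decomposition_on_Sasakian_manifold} with $m=1$ shows $\Phi\cdot=-\xi\cdot$ on all of $\Sigma M=\Sigma_0\oplus\Sigma_1$. Equivalently, $\varphi(X)\cdot\xi\cdot=X\cdot$ for $X\in\mathcal D$, which follows from $\omega^{\C}=-e_1\cdot\varphi(e_1)\cdot\xi$ acting as $\Id$. Hence $k=1$, and the second identity of Proposition~\ref{prop:fundamental_identities_for_SqK_spinors} reads $(1-2a)(1+2a+2b)=0$ directly. With this added, your algebra gives a self-contained proof of item~1; the paper simply quotes Kim--Friedrich's Theorem~8.3 there.

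\textbf{For $m\ge2$ your plan works, by a route different from the paper's.} Write the relation tensorially as $\varphi(X)\cdot\psi+kX\cdot\xi\cdot\psi+k\eta(X)\psi=0$ for all $X\in TM$. Differentiate it along $Y\in\mathcal D$ with $Y\perp X,\varphi(X)$ at a point. The terms involving $\nabla_Y\varphi$, $\nabla_Y\eta$, $\eta(X)$ and the $b$-part of $\nabla_Y\psi$ all vanish. Reducing the rest with the relation gives $k(k-2a)\,X\cdot Y\cdot\xi\cdot\psi=0$, so $a=\frac k2$.

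The paper instead differentiates $\Phi\cdot\psi=\pm m\,\xi\cdot\psi$ along a single $X\in\mathcal D$ and compares norms, arriving at $(m^2-1)(4a^2-1)=0$. Your version needs the extra orthogonal direction, but in exchange it ties the sign of $a$ to $k$. Drop the curvature fallback: the $\eta$-Einstein condition does not determine the curvature of such planes.
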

\begin{proof}
  The first statement is a direct consequence of \cite{KF00}*{Theorem~8.3}.
  Note that a Sasakian quasi-Killing spinor of type $(a,b)$ in our convention (see Remark \ref{rema:convention_KF}) corresponds to a Sasakian quasi-Killing spinor of type $(-a,-b)$ in \cite{KF00}.
  We prove the second statement.
  If $b = 0$, then Sasakian quasi-Killing spinors are Killing spinors with Killing number $a$.
  In this case, it is well known that $a = \pm \frac{1}{2}$.
  
  We now prove that if $m \geq 2$ and $b \neq 0$, then $a = \pm \frac{1}{2}$.
  Let $X$ be a vector field in $\ker \eta$.
  By the first identity in Proposition \ref{prop:fundamental_identities_for_SqK_spinors}, we have
  \[(\lambda - 8ma^2 - 4ab)X \cdot \psi = 2b\varphi(X) \cdot \xi \cdot \psi.\]
  If $X$ is a unit vector field in $\ker \eta$, then $\varphi(X)$ is also a unit vector field.
  Hence, taking the norms of both sides gives
  \begin{equation}
    \lambda - 8ma^2 - 4ab = \pm 2b. \label{eq:sqk_on_etaEinstein1}
  \end{equation}
  On the other hand, substituting $\scal = 2m(1+\lambda)$ into \eqref{eq:sqk_scal}, we have $\lambda = 4(2m+1)a^2 + 8ab - 1$.
  By substituting this into \eqref{eq:sqk_on_etaEinstein1}, we have $4a^2 + 4ab - 1 = \pm 2b$.
  Hence, by the second identity in Proposition \ref{prop:fundamental_identities_for_SqK_spinors}, we have $\Phi \cdot \psi = \pm m \xi \cdot \psi$.
  Next, we take the covariant derivative of both sides of this equation along a unit vector field $X \in \ker \eta$.
  Using the formula $X \cdot \Phi \cdot \psi - \Phi \cdot X \cdot \psi = 2\varphi(X) \cdot \psi$ (see \cite{KF00}*{Lemma~6.3}),
  \begin{align*}
    \nabla_X (\Phi \cdot \psi) &= (\nabla_X \Phi) \cdot \psi + \Phi \cdot \nabla_X \psi\\
    &= -X \cdot \xi \cdot \psi + a \Phi \cdot X \cdot \psi\\
    &= -X \cdot \xi \cdot \psi + a (X \cdot \Phi \cdot \psi - 2\varphi(X) \cdot \psi)\\
    &= -X \cdot \xi \cdot \psi + a (X \cdot (\pm m \xi \cdot \psi) - 2\varphi(X) \cdot \psi)\\
    &= (-1 \pm ma) X \cdot \xi \cdot \psi - 2a\varphi(X) \cdot \psi.
  \end{align*}
  On the other hand,
  \begin{align*}
    \nabla_X(\xi \cdot \psi) &= (\nabla_X \xi) \cdot \psi + \xi \cdot \nabla_X \psi\\
    &= -\varphi(X) \cdot \psi + \xi \cdot (a X \cdot \psi + b \eta(X) \xi \cdot \psi)\\
    &= -\varphi(X) \cdot \psi + a \xi \cdot X \cdot \psi\\
    &= -\varphi(X) \cdot \psi - a X \cdot \xi \cdot \psi.
  \end{align*}
  Therefore, differentiating $\Phi \cdot \psi = \pm m \xi \cdot \psi$ along $X$ gives
  \[(-1 \pm 2ma)X \cdot \xi \cdot \psi = -(-2a \pm m)\varphi(X) \cdot \psi.\]
  Taking the norms of both sides, we obtain $|-1 \pm 2ma| = |-2a \pm m|$.
  Squaring both sides gives $a^2 = \frac{1}{4}$.
\end{proof}

\subsection{Ricci Killing spinors on Sasakian \texorpdfstring{$\eta$}{eta}-Einstein manifolds}\label{sec:Rigidity_of_eta_Einstein_admitting_RKS}
We now apply the results in the previous section to Ricci Killing spinors on Sasakian $\eta$-Einstein manifolds.
The following proposition shows that the existence of a Ricci Killing spinor imposes a strong restriction on the geometry of the manifold.
\begin{prop}\label{prop:RKS_on_eta_Einstein}
  Let $(M^{2m+1}, g, \xi, \eta, \varphi)\,\,(m \geq 1)$ be a Sasakian $\eta$-Einstein spin manifold with $\Ric = \lambda g + \nu \eta \otimes \eta$.
  If $M$ admits a Ricci Killing spinor, then $M$ is either transversely Ricci-flat or Sasakian-Einstein.
\end{prop}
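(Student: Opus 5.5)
The approach is to reduce to Sasakian quasi-Killing spinors and then read off $\lambda$ from the scalar curvature identity, using Corollary~\ref{coro:sqk_on_etaEinstein} to pin down the type. Since $\Ric = \lambda g + \nu\eta\otimes\eta$, a Ricci Killing spinor $\psi$ with constant $\mu$ satisfies
\[\nabla_X\psi = \mu\lambda X\cdot\psi + \mu\nu\,\eta(X)\xi\cdot\psi,\]
so $\psi$ is a Sasakian quasi-Killing spinor of type $(a,b) = (\mu\lambda,\mu\nu)$. Recall that $\lambda+\nu = 2m$ and $\scal = 2m(\lambda+1)$. The goal is to show $\nu = 0$, which means $\lambda = 2m$ and $M$ is Sasakian-Einstein, or else $\lambda = -2$, which means $\nu = 2m+2$ and $M$ is transversely Ricci-flat.

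\textbf{Step 1: a dimension-independent constraint.} Substitute $\scal = 2m(\lambda+1)$ into \eqref{eq:sqk_scal} and divide by $2m$:
\[\lambda + 1 = 4(2m+1)a^2 + 8ab.\]
Write $4(2m+1)a^2 = 8m\,a^2 + 4a^2$ and use $a = \mu\lambda$, so that $8m\,a^2 = 8m\mu a\lambda$. Since $b = \mu(2m-\lambda)$, we get $8ab = 16m\mu a - 8\mu a\lambda$. Combining these should yield a factorization of the form
\[\nu\,(1+8\mu a)=0,\]
possibly after using a relation for $a^2$. I have not fully checked this algebra: the $4a^2$ term has to be absorbed, and that is where the next step comes in. So either $\nu = 0$, or $\mu a = \mu^2\lambda = -\tfrac18$.

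\textbf{Step 2, $m\ge 2$.} By Corollary~\ref{coro:sqk_on_etaEinstein}, $a = \pm\tfrac12$, so $4a^2 = 1$ and Step~1 becomes
\[\lambda = 2m + 8ab = 2m - 8\mu a\,\nu,\]
that is, $-\nu = -8\mu a\nu$. Comparing with the expected factorization, I expect the sign bookkeeping to be the main hazard. If $\nu \ne 0$, then $\mu a = \mu^2\lambda$ is determined. Together with $\mu^2\lambda^2 = a^2 = \tfrac14$, this gives
\[\lambda = \frac{\mu^2\lambda^2}{\mu^2\lambda},\]
and I must check that this equals $-2$. If the sign turns out to be $\mu a = +\tfrac18$ instead, then $\lambda = 2$. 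For $m\ge2$ this is not transversely Ricci-flat, so I would then have to rule it out. One way is via the norm identity \eqref{eq:sqk_on_etaEinstein1}, $4a^2+4ab-1 = \pm2b$, which forces $b(4a\mp2) = 0$ and constrains the sign of $ab$. Another is via the third identity of Proposition~\ref{prop:Basic_properties_of_RKS}. Either way, the remaining task is to confirm $\mu^2\lambda < 0$ in the non-Einstein case, forcing $\lambda = -2$.

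\textbf{Step 3, $m = 1$.} Here $\nu = 2-\lambda$, and I use the explicit list in Corollary~\ref{coro:sqk_on_etaEinstein}.
\begin{itemize}
\item[(i)] Type $\bigl(\tfrac12,\tfrac{\lambda-2}{4}\bigr)$. Then $\mu(2-\lambda) = \tfrac{\lambda-2}{4}$, so either $\lambda = 2$ (Einstein) or $\mu = -\tfrac14$. In the latter case $\mu\lambda = \tfrac12$ gives $\lambda = -2$.
\item[(ii)] Type $\bigl(\tfrac{1\pm s}{2},\tfrac{-2\mp s}{2}\bigr)$ with $s = \sqrt{\lambda+2}$. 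Adding the two components gives $2\mu = a+b = -\tfrac12$, so $\mu = -\tfrac14$ and $\lambda = -4a = -2(1\pm s)$. Then $s^2 = \lambda+2 = \mp 2s$, so $s = 0$, giving $\lambda = -2$, or $s = 2$ with the lower sign, giving $\lambda = 2$.
\end{itemize}
In all cases $\lambda\in\{2,-2\}$, which proves the proposition for $m=1$.

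The main obstacle is the sign analysis in Step~2. I would settle it by combining the scalar curvature relation with \eqref{eq:sqk_on_etaEinstein1} and $a^2 = \tfrac14$, which together should determine $\mu$ and $\lambda$ completely.
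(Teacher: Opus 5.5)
Your overall route is the paper's: you view $\psi$ as a Sasakian quasi-Killing spinor of type $(\mu\lambda,\mu\nu)$, restrict the type with Corollary~\ref{coro:sqk_on_etaEinstein}, and feed the result into \eqref{eq:sqk_scal}. Your $m=1$ analysis is correct and complete. In case (i) you use the explicit $b=\frac{\lambda-2}{4}$, which is a little more direct than the paper's ``similar argument''. In case (ii) you work with the explicit $a$ rather than resubstituting into \eqref{eq:sqk_scal}, which is equally fine. The problem is the case $m\ge 2$, which is the heart of the statement. As written, your proposal does not settle it.

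\textbf{Step 1.} The factorization is not dimension-independent. Without $4a^2=1$, \eqref{eq:sqk_scal} only gives $\lambda+1=\mu^2\lambda(8m\lambda-4\lambda+16m)$. At $\lambda=2m$ this reduces to $(2m+1)(1-16m^2\mu^2)=0$, which is a condition on $\mu$ rather than an identity, so there is no factor $\nu$ to extract in general. Step~1 only makes sense merged into Step~2.

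\textbf{Step 2.} You write $\lambda=2m-8\mu a\nu$, but since $b=\mu\nu$ the relation is $\lambda=2m+8ab=2m+8\mu a\nu$. Combined with $\lambda-2m=-\nu$, this gives $\nu(1+8\mu a)=0$. Your sign slip produces the spurious branch $\mu a=+\tfrac18$, i.e.\ $\lambda=2$. For $m\ge 2$ that would be a genuine counterexample to the proposition, and you leave it open, proposing to rule it out via \eqref{eq:sqk_on_etaEinstein1} or Proposition~\ref{prop:Basic_properties_of_RKS}. No such detour is needed. With the correct sign, $\nu\neq 0$ forces $\mu^2\lambda=\mu a=-\tfrac18$, and together with $\mu^2\lambda^2=a^2=\tfrac14$ this gives $\lambda=-2$ at once.

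This corrected computation is equivalent to the paper's. The paper multiplies $\lambda=2m\pm 4b$ by $\lambda$ and uses $\lambda b=a\nu=\pm\tfrac12(2m-\lambda)$ to obtain $(\lambda-2m)(\lambda+2)=0$. Once you fix the sign, your argument is complete.
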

\begin{proof}
  We assume that a Sasakian $\eta$-Einstein manifold $M$ admits a Ricci Killing spinor with a constant $\mu$ and $M$ is not a Sasakian-Einstein manifold.
  The Ricci Killing spinor is a Sasakian quasi-Killing spinor of type $(a, b) = (\mu\lambda, \mu\nu)$ and $b \neq 0$.
  By equation \eqref{eq:sqk_scal}, we have
  \begin{equation}
    2m(1+\lambda) = 8m(2m+1)a^2 + 16mab \label{eq:scal_on_etaEinstein_with_SqK}
  \end{equation}
  \begin{enumerate}
    \item If $m \geq 2$, then $a = \pm \frac{1}{2}$ by Corollary \ref{coro:sqk_on_etaEinstein}, and by substituting this into \eqref{eq:scal_on_etaEinstein_with_SqK}, we have
     \begin{equation}
      \lambda^2 = 2m\lambda \pm 4\lambda b. \label{eq:RKS_on_eta_Einstein1}
     \end{equation}
     On the other hand, since $\lambda + \nu = 2m$ in a Sasakian $\eta$-Einstein manifold, we have 
    \[\lambda b = \lambda\mu\nu = a(2m-\lambda) = \pm \frac{1}{2}(2m-\lambda).\]
    By substituting this into \eqref{eq:RKS_on_eta_Einstein1}, we have $(\lambda - 2m)(\lambda + 2) = 0$. Since we assumed that $M$ is not a Sasakian-Einstein manifold, i.e., $\lambda \neq 2m$, we have $\lambda = -2$, which implies that $M$ is transversely Ricci-flat.

    \item If $m = 1$, then $a = \frac{1}{2}$ or $a + b = -\frac{1}{2}$ by Corollary \ref{coro:sqk_on_etaEinstein}. 
    For $a = \frac{1}{2}$, we have $\lambda = -2$ by an argument similar to that in the case of $m \geq 2$. 
    For $a + b = -\frac{1}{2}$, since $a = \mu\lambda$ and $b = \mu\nu$, we have $\mu(\lambda + \nu) = -\frac{1}{2}$. Since $\lambda + \nu = 2m = 2$, we have $\mu = -\frac{1}{4}$ and thus
    \[a = -\frac{\lambda}{4}, \quad\quad b = -\frac{\nu}{4} = -\frac{2 - \lambda}{4}.\]
    Substituting these into \eqref{eq:scal_on_etaEinstein_with_SqK}, we have $\lambda^2 = 4$.
    Since $M$ is not a Sasakian-Einstein manifold, we have $\lambda \neq 2$, and thus $\lambda = -2$.
    This means that $M$ is transversely Ricci-flat.
  \end{enumerate}
\end{proof}

We next prove that if $M$ is simply connected, the converse of Proposition \ref{prop:RKS_on_eta_Einstein} is also true.
It is well known that a simply connected Sasakian-Einstein spin manifold admits a Killing spinor.
So, we investigate the existence of Ricci Killing spinors on simply connected transversely Ricci-flat Sasakian manifolds.

Let $(M^{2m+1}, g, \xi, \eta, \varphi)$ be a simply connected transversely Ricci-flat Sasakian spin manifold.
Since $\scal = -2m$ and $|\Ric|^2 = 4m^2 + 8m$, by the third equation of Proposition \ref{prop:Basic_properties_of_RKS}, if $M$ admits a Ricci Killing spinor with a constant $\mu$, then $\mu^2 = \frac{1}{16}$.
Moreover, in this case, the Ricci Killing spinor with $\mu = \pm \frac{1}{4}$ is a Sasakian quasi-Killing spinor of type $(a, b) = (\mp\frac{1}{2}, \pm\frac{1}{2}(1+m))$ and vice versa.

In the three-dimensional case, by Corollary \ref{coro:sqk_on_etaEinstein}, the type $(-\frac{1}{2}, 1)$ does not occur.
Hence, Sasakian quasi-Killing spinors of type $(a, b) = (\frac{1}{2}, -1)$ are the only candidates for Ricci Killing spinors on transversely Ricci-flat Sasakian 3-manifolds.
In general, a simply connected Sasakian $\eta$-Einstein spin $3$-manifold admits Sasakian quasi-Killing spinors of type $\left(\frac12,\frac{\scal-6}{8}\right)$, and the space of such spinors has complex dimension $2$ (see \cite{KF00}*{Theorem~8.4} and Remark~\ref{rema:convention_KF}).

\begin{prop}\label{prop:RKS_on_3dim_trans_Ricciflat}
  Let $(M^3, g, \xi, \eta, \varphi)$ be a simply connected transversely Ricci-flat Sasakian spin manifold.
  Then $M$ admits a Ricci Killing spinor with $\mu = -\frac{1}{4}$ and the dimension of the space of such spinors is 2.
\end{prop}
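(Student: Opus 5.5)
For $m=1$, transverse Ricci-flatness gives $\Ric = -2g + 4\eta\otimes\eta$, so $\lambda=-2$, $\nu=4$ and $\scal = 2m(\lambda+1) = -2$. For $\mu=-\frac14$ the Ricci Killing spinor equation reads
\[\nabla_X\psi = -\tfrac14\Ric(X)\cdot\psi = \tfrac12 X\cdot\psi - \eta(X)\xi\cdot\psi .\]
So Ricci Killing spinors with $\mu=-\frac14$ are exactly Sasakian quasi-Killing spinors of type $(a,b)=(\frac12,-1)$. This is the case $\mu=\pm\frac14$ with $m=1$ and the lower sign noted just before the statement.

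The plan is to identify the space of Ricci Killing spinors with $\mu=-\frac14$ with the space of Sasakian quasi-Killing spinors of type $(\frac12,-1)$, and then apply the existence result quoted from \cite{KF00}*{Theorem~8.4}. That result says a simply connected Sasakian $\eta$-Einstein spin $3$-manifold carries a $2$-dimensional space of Sasakian quasi-Killing spinors of type $\bigl(\frac12,\frac{\scal-6}{8}\bigr)$. Substituting $\scal=-2$ gives $\frac{\scal-6}{8} = -1$, which is precisely our type. The two spaces therefore coincide as subspaces of $\Gamma(\Sigma M)$, and the dimension $2$ is inherited directly.

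The main point to check carefully is the sign convention. Remark~\ref{rema:convention_KF} says that type $(a,b)$ in our convention corresponds to type $(-a,-b)$ in \cite{KF00}. I would re-derive that the type $\bigl(\frac12,\frac{\scal-6}{8}\bigr)$ is the correct translation into our convention, as the paper asserts, and not its negative.

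As an independent sanity check, I would use Remark~\ref{rema:curv_to_GKS_dim3} with $A=-\frac14\Ric$. Since $\Ric$ is parallel along the relevant directions up to the Sasakian terms, $d^\nabla A$ is computable from $\nabla\eta = -\varphi$-type formulas. Verifying the tensorial flatness equation $\frac12 R(X\wedge Y) = \ast(d^\nabla A)(X,Y) - 2A(X)\wedge A(Y)$, using the known curvature of a transversely flat Sasakian $3$-manifold (locally the Heisenberg group), would show that $\nabla^A$ is flat. Simple connectivity then yields a full $\mathrm{rank}\,\Sigma M = 2$ dimensional space of parallel sections of $\nabla^A$, which are exactly the Ricci Killing spinors with $\mu=-\frac14$. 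This confirms the count without relying on the convention translation.
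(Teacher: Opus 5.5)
Your proposal is correct and follows essentially the paper's route. Like the paper, you identify Ricci Killing spinors with $\mu=-\frac14$ as Sasakian quasi-Killing spinors of type $(\frac12,-1)$ and then apply \cite{KF00}*{Theorem~8.4} with $\scal=-2$. The paper additionally uses $\mu^2=\frac1{16}$ and Corollary~\ref{coro:sqk_on_etaEinstein} to rule out $\mu=\frac14$, which the statement does not require. Your flatness check via Remark~\ref{rema:curv_to_GKS_dim3} is a valid independent confirmation of the dimension count.
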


In particular, if a simply connected transversely Ricci-flat Sasakian 3-manifold is complete, then it is isometric to the three-dimensional Heisenberg group $\mathrm{Nil}^3$ equipped with its standard left-invariant Sasakian metric.
Indeed, a three-dimensional Sasakian $\eta$-Einstein manifold with $\Ric = \lambda g + \nu \eta \otimes \eta$ has constant $\varphi$-sectional curvature $\lambda-1$.
In the transversely Ricci-flat case, $\lambda=-2$, and hence the $\varphi$-sectional curvature is $-3$.
The classification of complete simply connected Sasakian space forms therefore implies that $M$ is isometric to $\mathrm{Nil}^3$ with its standard Sasakian structure (see, for example, \cite{IO24}*{Proposition 7.1}). 
Ricci Killing spinors on three-dimensional unimodular Lie groups, including the Heisenberg group, will be discussed in Section~\ref{sec:RKS_on_3dim_Lie_groups}.

We now consider the case $m\geq 2$.
As observed above, a Ricci Killing spinor with $\mu=\pm\frac14$ is a Sasakian quasi-Killing spinor of type $\left(\mp\frac12,\pm\frac{m+1}{2}\right)$.
Suppose first that there exists a Ricci Killing spinor with $\mu=\frac{1}{4}$. Then the corresponding Sasakian quasi-Killing spinor $\psi$ satisfies $\Phi\cdot\psi=m\xi\cdot\psi$ by Proposition \ref{prop:fundamental_identities_for_SqK_spinors}.
Writing $\psi=\sum_{r=0}^m\psi_r$ with $\psi_r\in\Gamma(\Sigma_r)$, we obtain
\[i(m-2r)\psi_r=m(-1)^{r+1}i\psi_r,\]
by Lemma \ref{lemm:spinor_bundle_decomposition_on_Sasakian_manifold}.
Hence $\psi_r$ can be non-zero only for $r=m$, and this is possible only when $m$ is even. Thus, if $m$ is even, a Ricci Killing spinor with $\mu=\frac14$ must lie in $\Sigma_m$, whereas no such spinor can exist when $m$ is odd.
Similarly, if $\mu=-\frac14$, then a Ricci Killing spinor must lie in $\Sigma_0$ when $m$ is even, while for odd $m$ it may have components in both $\Sigma_0$ and $\Sigma_m$.

The existence of spinors in all the cases allowed above follows from
\cite{KF00}*{Theorem~6.3}, after translating the result into our convention (see Remark \ref{rema:convention_KF}).
We therefore obtain the following proposition.

\begin{prop}\label{prop:RKS_on_trans_Ricciflat}
Let $(M^{2m+1},g,\xi,\eta,\varphi)$ $(m\geq 2)$ be a simply connected transversely Ricci-flat Sasakian spin manifold.
\begin{enumerate}
    \item If $m$ is even, the space of Ricci Killing spinors with $\mu=-\frac14$ is one-dimensional and is contained in $\Gamma(\Sigma_0)$, while the space of Ricci Killing spinors with $\mu=\frac14$ is one-dimensional and is contained in $\Gamma(\Sigma_m)$.
    \item If $m$ is odd, the space of Ricci Killing spinors with $\mu=-\frac14$ is two-dimensional and is the direct sum of a one-dimensional subspace of $\Gamma(\Sigma_0)$ and a one-dimensional subspace of $\Gamma(\Sigma_m)$. There is no Ricci Killing spinor with $\mu=\frac14$.
\end{enumerate}
\end{prop}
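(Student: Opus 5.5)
The plan is to reduce the statement to an existence and dimension count for Sasakian quasi-Killing spinors. As observed just before the proposition, on a transversely Ricci-flat Sasakian manifold we have $\Ric=-2g+(2m+2)\eta\otimes\eta$. Hence a spinor is a Ricci Killing spinor with constant $\mu=\pm\frac14$ exactly when it is a Sasakian quasi-Killing spinor of type $(a,b)=(\mp\frac12,\pm\frac{m+1}{2})$. Moreover, the third identity of Proposition~\ref{prop:Basic_properties_of_RKS} forces $\mu^2=\frac1{16}$. So the spaces in question are precisely the spaces of Sasakian quasi-Killing spinors of these two types, and no other values of $\mu$ occur.

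The next step is to settle the support constraints, already derived above.
\begin{itemize}
\item For $\mu=\frac14$, the second identity of Proposition~\ref{prop:fundamental_identities_for_SqK_spinors} gives $\Phi\cdot\psi=m\xi\cdot\psi$. By Lemma~\ref{lemm:spinor_bundle_decomposition_on_Sasakian_manifold}, this forces $\psi\in\Gamma(\Sigma_m)$ when $m$ is even and $\psi=0$ when $m$ is odd.
\item For $\mu=-\frac14$, we get $\Phi\cdot\psi=-m\xi\cdot\psi$, i.e. $i(m-2r)\psi_r=m(-1)^{r}i\psi_r$. Thus $r=0$ is always allowed, and $r=m$ is allowed only when $m$ is odd.
\end{itemize}
This already yields the nonexistence for odd $m$ and $\mu=\frac14$, and the containment statements.

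For the dimensions, I would show two things.
\begin{itemize}
\item \emph{Each component is itself a solution.} The quasi-Killing equation $\nabla_X\psi=aX\cdot\psi+b\eta(X)\xi\cdot\psi$ should preserve $\Sigma_0$ and $\Sigma_m$ under the constraint, so that for odd $m$ and $\mu=-\frac14$ the $\Sigma_0$- and $\Sigma_m$-components of a solution are separately solutions. To see this, note that the algebraic characterizations of $\Sigma_0,\Sigma_m$ in Lemma~\ref{lemm:spinor_bundle_decomposition_on_Sasakian_manifold} let one rewrite $X\cdot\psi$ and $\xi\cdot\psi$ for $\psi\in\Sigma_0$ (or $\Sigma_m$) in terms of $\varphi(X)\cdot\psi$ and $\psi$. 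The equation then becomes a connection on the line bundle $\Sigma_0$ (resp.\ $\Sigma_m$).
\item \emph{Each component space has dimension at most one.} Since $\mathrm{rank}_\C\Sigma_0=\mathrm{rank}_\C\Sigma_m=1$ and a solution is determined by its value at one point (it is parallel for the modified connection $\nabla_X-A(X)\cdot$), each such space is at most one-dimensional.
\end{itemize}

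Existence, giving dimension exactly one in each allowed case, is where I rely on \cite{KF00}*{Theorem~6.3}. That result constructs, on simply connected transversely Ricci-flat (null) Sasakian spin manifolds, spinors in $\Sigma_0$ and $\Sigma_m$ satisfying the quasi-Killing equation. Translating via Remark~\ref{rema:convention_KF} (type $(a,b)\mapsto(-a,-b)$, reversed indexing), I expect these to be exactly the required types. Alternatively, one can check flatness of the induced connection on $\Sigma_0$ using the transverse Ricci-flatness, then use simple connectivity to obtain global sections.

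The main obstacle is this existence/flatness step: carefully matching conventions so that $\Sigma_0$ carries type $(\frac12,-\frac{m+1}2)$ for all $m$, while $\Sigma_m$ carries $\mu=\frac14$ for even $m$ but $\mu=-\frac14$ for odd $m$. Here the sign $(-1)^{m+1}$ in the description of $\Sigma_m$ is decisive. Combining the pieces gives the stated one- and two-dimensional spaces.
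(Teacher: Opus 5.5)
Your proposal is correct and follows essentially the paper's argument. You identify Ricci Killing spinors with $\mu=\pm\frac14$ with Sasakian quasi-Killing spinors of type $(\mp\frac12,\pm\frac{m+1}{2})$, use $\Phi\cdot\psi=\pm m\,\xi\cdot\psi$ with Lemma~\ref{lemm:spinor_bundle_decomposition_on_Sasakian_manifold} to confine the support, and take existence from \cite{KF00}*{Theorem~6.3}. Your step that each component is separately a solution is unnecessary: a solution is determined by its value at one point and takes values in $\Sigma_0$, $\Sigma_m$ or $\Sigma_0\oplus\Sigma_m$ (of ranks $1$, $1$, $2$), so existence alone gives the dimensions and the direct sum. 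If you keep that step, justify it via Lemma~\ref{lemm:transverse_spin_connection_on_Sasakian}, which gives $\nabla_X\psi_0+\frac14\Ric(X)\cdot\psi_0=\nabla^T_X\psi_0$ (and likewise for $\psi_m$ when $m$ is odd), rather than by the algebraic characterizations of $\Sigma_0,\Sigma_m$ alone.
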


Combining Proposition~\ref{prop:RKS_on_eta_Einstein} with the above existence results for transversely Ricci-flat Sasakian manifolds, and using the existence of real Killing spinors on simply connected Sasakian-Einstein spin manifolds, we obtain the first assertion of the following theorem.

\begin{theoremalpha}\label{theo:RKS_on_Sasakian_eta_Einstein}
  Let $(M^{2m+1}, g, \xi, \eta, \varphi)\,\,(m \geq 1)$ be a simply connected Sasakian $\eta$-Einstein spin manifold.
  Then $M$ admits a Ricci Killing spinor if and only if $M$ is either transversely Ricci-flat or Sasakian-Einstein.
  Furthermore, Ricci Killing spinors on transversely Ricci-flat Sasakian spin manifolds are transversely parallel.
\end{theoremalpha}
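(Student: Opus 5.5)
The first assertion is essentially assembled from earlier results. For the ``only if'' direction I would invoke Proposition~\ref{prop:RKS_on_eta_Einstein}, which does not even need simple connectedness. For the ``if'' direction I would split into the two cases. If $M$ is Sasakian-Einstein and simply connected, the classical result gives a real Killing spinor $\nabla_X\psi=\pm\frac12 X\cdot\psi$. Since $\Ric=2m\,\Id$, this spinor is a Ricci Killing spinor with $\mu=\pm\frac{1}{4m}$, exactly as in the discussion following the definition of Ricci Killing spinors. If $M$ is transversely Ricci-flat, existence follows from Proposition~\ref{prop:RKS_on_3dim_trans_Ricciflat} when $m=1$ and from Proposition~\ref{prop:RKS_on_trans_Ricciflat} when $m\geq 2$.

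The new content is the second assertion. I would take ``transversely parallel'' to mean that $\psi$ is parallel with respect to the transverse (basic) spin connection along $\mathcal D=\ker\eta$. Equivalently, $\psi$ is parallel for the connection on $\Sigma M$ induced from the Levi-Civita connection of $g^T$ via the O'Neill-type formula. That formula is
\[\nabla^T_X\psi=\nabla_X\psi-\tfrac12\varphi(X)\cdot\xi\cdot\psi\ (\text{up to sign convention})\]
for $X\in\mathcal D$. Along $\xi$ the condition is $\nabla_\xi\psi=c\,\Phi\cdot\psi$ for the appropriate constant $c$, or invariance under the Reeb flow. I would fix the precise formula by comparing the Levi-Civita connection of $g$ with the transverse connection: $\nabla_XY=\nabla^T_XY+g(\varphi X,Y)\xi$ type terms, together with $\nabla_X\xi=-\varphi X$. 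This yields the correction term in the spin connection, namely $\frac12\sum g(\nabla_X e_i,\xi)\,e_i\cdot\xi\cdot$, which is $\pm\frac12\varphi(X)\cdot\xi\cdot$.

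With this in hand, the computation is short. For transversely Ricci-flat $M$, the Ricci Killing spinor satisfies, for $X\in\mathcal D$,
\[\nabla_X\psi=-2\mu X\cdot\psi,\qquad \mu=\pm\tfrac14,\]
so $\nabla_X\psi=\mp\frac12X\cdot\psi$. I then need to show that $\mp\frac12X\cdot\psi$ equals the correction term $\frac12\varphi(X)\cdot\xi\cdot\psi$ (with the correct sign). This is exactly the defining relation of $\Sigma_0$ or $\Sigma_m$ in Lemma~\ref{lemm:spinor_bundle_decomposition_on_Sasakian_manifold}. For instance, on $\Sigma_0$ one has $\xi\cdot\psi=-i\psi$ and $\varphi(X)\cdot\psi=iX\cdot\psi$ for $X\in\mathcal D$. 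Hence $\varphi(X)\cdot\xi\cdot\psi=-\xi\cdot\varphi(X)\cdot\psi=-\xi\cdot iX\cdot\psi=iX\cdot\xi\cdot\psi=X\cdot\psi$, and similarly on $\Sigma_m$. By the discussion before Proposition~\ref{prop:RKS_on_trans_Ricciflat}, every Ricci Killing spinor for $m\geq2$ lies in $\Gamma(\Sigma_0)$, in $\Gamma(\Sigma_m)$, or (for odd $m$, $\mu=-\frac14$) in a sum of the two, so the transverse derivative vanishes. In the $\xi$-direction, $\nabla_\xi\psi=\mu(\lambda+\nu)\xi\cdot\psi=2m\mu\,\xi\cdot\psi$. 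I would check that this is consistent with the basic-spinor condition, using $\Phi\cdot\psi=\pm m\xi\cdot\psi$, which is the form in which the $\xi$-derivative of the transverse connection appears.

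The case $m=1$ must be handled separately. There the spinors have type $(\frac12,-1)$ and need not a priori lie in a single $\Sigma_r$. I would use the second identity of Proposition~\ref{prop:fundamental_identities_for_SqK_spinors}: with $a=\frac12$, $b=-1$ it gives $-2\Phi\cdot\psi=m(1-1+2)\xi\cdot\psi$, i.e.\ $\Phi\cdot\psi=-\xi\cdot\psi$. Decomposing as before, this forces $\psi$ into $\Sigma_0\oplus\Sigma_1$ with the defining relations $\varphi(X)\cdot\psi=\pm iX\cdot\psi$ on each component. The same computation then applies.

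I expect the main obstacle to be sign bookkeeping. That means establishing the correct formula relating the spin connection to the transverse spin connection in the paper's Clifford convention (volume element $+\Id$, the chosen orientation, the reversed indexing of $\Sigma_r$), so that the cancellation happens precisely for the sign of $\mu$ allowed on each $\Sigma_r$.
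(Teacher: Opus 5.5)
Your proposal is correct and follows essentially the same route as the paper. The first assertion is assembled exactly as you describe. The second is proved by comparing $\nabla$ with $\nabla^T$ through Lemma~\ref{lemm:transverse_spin_connection_on_Sasakian}, i.e.\ $\nabla_X\psi=\nabla^T_X\psi+\frac12\eta(X)\Phi\cdot\psi-\frac12\xi\cdot\varphi(X)\cdot\psi$, so your constant along $\xi$ is $c=\frac12$. One then checks via Lemma~\ref{lemm:spinor_bundle_decomposition_on_Sasakian_manifold} that the correction term equals $\mu\Ric(X)\cdot\psi$ on $\Sigma_0$ (with $\mu=-\frac14$) and on $\Sigma_m$ (with $\mu=\frac{(-1)^m}{4}$).

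The differences are cosmetic. The paper verifies this identity for all $X\in TM$ at once rather than splitting into $\mathcal{D}$ and $\xi$. For $m=1$ your appeal to the $\Phi$-identity is vacuous, since $\Sigma M=\Sigma_0\oplus\Sigma_1$ there and both components give $\mu=-\frac14$.
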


Many examples of simply connected transversely Ricci-flat Sasakian spin manifolds of dimension at least five are obtained from compact null Sasakian $\eta$-Einstein links of isolated hypersurface singularities defined by weighted homogeneous polynomials (see \cite{BG08} and Remark~\ref{rema:trans_Ricciflat_and_null}).

The last assertion of Theorem~\ref{theo:RKS_on_Sasakian_eta_Einstein} follows from the discussion below.
We now review transversely parallel spinors on Sasakian spin manifolds, following \cite{GH08}.
Let $(M^{2m+1},g,\xi,\eta,\varphi)$ be a Sasakian spin manifold and set $\mathcal{D} = \xi^{\perp} = \ker\eta$.
The normal bundle $\mathcal{D}$ carries the transverse Levi-Civita connection $\nabla^T$, defined by
\[\nabla^T_X Z:=\begin{cases}[\xi,Z]^\mathcal{D},&X=\xi,\\(\nabla_XZ)^\mathcal{D},&X\in \mathcal{D},\end{cases}\qquad Z\in\Gamma(\mathcal{D}),\]
where $(\,\,\cdot\,\,)^\mathcal{D}$ denotes the orthogonal projection onto $\mathcal{D}$.
The spin structure of $M$ induces a spin structure $P_{\Spin}\mathcal{D}$ on $\mathcal{D}$, and the spinor bundle $\Sigma M$ can be identified with $\Sigma \mathcal{D}$.
We equip $\mathcal{D}$ with the induced orientation for which $(e_1,\varphi(e_1),\ldots,e_m,\varphi(e_m))$ is positively oriented.
We take the identification such that the Clifford multiplication on $\Sigma M$ is related to the Clifford multiplication on $\Sigma \mathcal{D}$ by
\[\xi \cdot Z \cdot \psi = Z \cdot_{\mathcal{D}} \psi,\qquad i\xi \cdot \psi = \omega^{\C}_{\mathcal{D}} \cdot \psi,\]
where $Z \in \mathcal{D}$, $\psi \in \Sigma M$, and $\omega^{\C}_{\mathcal{D}}$ is the complex volume element of $\mathcal{D}$.
Under this identification, we regard the spin connection induced by $\nabla^T$ as a connection on $\Sigma M$, which will again be denoted by $\nabla^T$.
The relation between $\nabla^T$ and the spin connection $\nabla$ of $(M,g)$ is given as follows.

\begin{lemm}[{\cite{GH08}*{(2)}}]\label{lemm:transverse_spin_connection_on_Sasakian}
For every $\psi\in\Gamma(\Sigma M)$ and $Z\in \mathcal{D}$, we have
\begin{equation}\label{eq:trans_conn_and_LC_conn}
  \nabla_\xi\psi=\nabla^T_\xi\psi+\frac12\Phi\cdot\psi,\qquad \nabla_Z\psi=\nabla^T_Z\psi-\frac12\xi\cdot\varphi(Z)\cdot\psi.
\end{equation}
\end{lemm}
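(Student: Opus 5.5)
The plan is to compare the two spin connections through the difference of the underlying metric connections on $TM$. First, extend $\nabla^T$ to a metric connection on all of $TM=\mathcal{D}\oplus\R\xi$ by declaring $\nabla^T\xi=0$. This extended connection preserves the splitting and the metric on $\mathcal{D}$. Next, check that its spin lift to $\Sigma M$ coincides with the connection obtained from $\Sigma\mathcal{D}$ under the identification $\xi\cdot Z\cdot\psi=Z\cdot_{\mathcal{D}}\psi$, $i\xi\cdot\psi=\omega^{\C}_{\mathcal{D}}\cdot\psi$. Both sides of this identification are built from $\xi$ and from Clifford multiplication by $\mathcal{D}$, and both are parallel for the extended connection. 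Hence $\nabla^T$ on $\Sigma M$ is simply the spin lift of this connection on $TM$.

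Since both $\nabla$ and the extended $\nabla^T$ are metric, the difference $S_X:=\nabla_X-\nabla^T_X$ is a skew-symmetric endomorphism of $TM$ for each $X$. The spin connections then differ by the standard lift
\[
\nabla_X\psi-\nabla^T_X\psi=\frac14\sum_{i,j}g(S_Xe_i,e_j)\,e_i\cdot e_j\cdot\psi ,
\]
that is, by $\frac12$ times the Clifford action of the $2$-form associated with $S_X$, with sign conventions to be fixed.

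I would compute $S_X$ using the Sasakian identity $\nabla_X\xi=-\varphi(X)$, which is already used in the paper via $(\nabla_X\xi)\cdot\psi=-\varphi(X)\cdot\psi$. For $Z\in\mathcal{D}$ the computation runs as follows.
\begin{itemize}
\item On $\xi$: $S_Z\xi=\nabla_Z\xi=-\varphi(Z)$.
\item On $W\in\mathcal{D}$: since $\nabla^T_ZW=(\nabla_ZW)^{\mathcal{D}}$, we get
\[
S_ZW=g(\nabla_ZW,\xi)\xi=-g(W,\nabla_Z\xi)\xi=g(W,\varphi(Z))\xi .
\]
\end{itemize}
Thus $S_Z$ is the rotation in the plane spanned by $\xi$ and $\varphi(Z)$. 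Its lift is $\pm\frac12\,\xi\cdot\varphi(Z)$, and the sign should come out as $-\frac12\,\xi\cdot\varphi(Z)\cdot\psi$.

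For $X=\xi$ and $W\in\mathcal{D}$ the computation is
\[
S_\xi W=\nabla_\xi W-[\xi,W]^{\mathcal{D}}=\nabla_W\xi+[\xi,W]^{\eta}=-\varphi(W)+(\text{a multiple of }\xi),
\]
where the $\xi$-component vanishes because $g(\nabla_\xi W,\xi)=-g(W,\nabla_\xi\xi)=0$ and $\eta([\xi,W])=0$. Also $S_\xi\xi=0$. Therefore $S_\xi=-\varphi$ on $TM$, whose associated $2$-form is $\pm\Phi$ via $\Phi(X,Y)=g(X,\varphi(Y))$. This yields the term $\frac12\Phi\cdot\psi$.

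The routine parts are the identification of $S_X$; the main obstacle is bookkeeping signs. The sign convention in the lift formula, the convention $e_i\wedge e_j\mapsto e_i\cdot e_j$, the sign in $\Phi(X,Y)=g(X,\varphi Y)$ and the chosen identification $\Sigma M\cong\Sigma\mathcal{D}$ must all be consistent. As a check on the final signs, I would verify them against Lemma~\ref{lemm:spinor_bundle_decomposition_on_Sasakian_manifold}. For example, one can confirm that $\nabla_X(\xi\cdot\psi)$ computed from \eqref{eq:trans_conn_and_LC_conn} reproduces $(\nabla_X\xi)\cdot\psi=-\varphi(X)\cdot\psi$, using that $\xi\cdot$ is $\nabla^T$-parallel.
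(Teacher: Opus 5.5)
The paper gives no proof of its own for this lemma; it quotes the formula from \cite{GH08}, where it appears in the more general setting of Riemannian flows. Your derivation is a correct, self-contained replacement, and the signs you leave open come out exactly as stated.

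\textbf{The $\mathcal{D}$-term.} Take a unit $Z\in\mathcal{D}$ and an orthonormal frame containing $\xi$ and $\varphi(Z)$. The only non-zero entries of $S_Z$ are $g(S_Z\xi,\varphi(Z))=-1$ and $g(S_Z\varphi(Z),\xi)=1$. Hence the lift is $\frac14\bigl(-\xi\cdot\varphi(Z)+\varphi(Z)\cdot\xi\bigr)=-\frac12\,\xi\cdot\varphi(Z)$.

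\textbf{The $\xi$-term.} For $S_\xi=-\varphi$ one has $g(-\varphi(e_i),e_j)=\Phi(e_i,e_j)$. So the lift is $\frac14\sum_{i,j}\Phi(e_i,e_j)\,e_i\cdot e_j=\frac12\Phi$. This uses the normalization of $\Phi\cdot$ for which $X\cdot\Phi-\Phi\cdot X=2\varphi(X)$, the identity from \cite{KF00} used in the paper.

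Compared with the citation, your computation checks the signs directly in this paper's conventions rather than relying on a match with \cite{GH08}. These conventions are $\omega^{\C}=\Id$, the chosen orientation, and the identification $\xi\cdot Z\cdot\psi=Z\cdot_{\mathcal{D}}\psi$.

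Two points should be tightened.
\begin{enumerate}
\item Parallelism of the identification does not by itself identify the two spinor connections. Compatibility with Clifford multiplication determines a spinor connection only up to a scalar $1$-form. The clean reason is that $\nabla^T\oplus d$ preserves $\xi$, so it reduces to $P_{\SO}\mathcal{D}$ and lifts through $P_{\Spin}\mathcal{D}\subset P_{\Spin}M$. Concretely, the local formulas coincide because $e_i\cdot_{\mathcal{D}}e_j\cdot_{\mathcal{D}}=\xi\cdot e_i\cdot\xi\cdot e_j=e_i\cdot e_j$ for $e_i,e_j\in\mathcal{D}$.
\item Your test with $\nabla_X(\xi\cdot\psi)$ only fixes the sign of the $\mathcal{D}$-term, since $\Phi$ commutes with $\xi$. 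The sign of $\frac12\Phi$ is confirmed instead by expanding $\nabla_\xi(Z\cdot\psi)$ with $Z\cdot\Phi-\Phi\cdot Z=2\varphi(Z)$, which reproduces $S_\xi Z=-\varphi(Z)$.
\end{enumerate}
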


Equivalently, \eqref{eq:trans_conn_and_LC_conn} can be written as
\[\nabla_X \psi = \nabla^T_X \psi + \frac{1}{2}\eta(X) \Phi \cdot \psi - \frac{1}{2} \xi \cdot \varphi(X) \cdot \psi, \qquad \forall X \in TM.\]

\begin{defi}
A non-zero spinor field $\psi\in\Gamma(\Sigma M)$ is called \emph{transversely parallel} if $\nabla^T_X\psi = 0$ for all $X \in TM$.
\end{defi}

The existence of a transversely parallel spinor forces the transverse Ricci curvature to vanish.

\begin{lemm}\label{lemm:trans_parallel_trans_Ricciflat}
Let $(M^{2m+1},g,\xi,\eta,\varphi)$ be a Sasakian spin manifold. If $M$ admits a non-zero transversely parallel spinor, then $M$ is transversely Ricci-flat.
\end{lemm}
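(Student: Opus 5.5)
The plan is to compute the curvature of the transverse spin connection $\nabla^T$ and apply the standard argument: a parallel section of a spin bundle forces Ricci-flatness. Since $\nabla^T\psi=0$, the curvature $R^{T}(X,Y)$ of $\nabla^T$ on $\Sigma M \cong \Sigma\mathcal{D}$ annihilates $\psi$ for all $X,Y\in TM$. Its curvature is given by the usual formula on the Clifford bundle of $\mathcal{D}$:
\[
R^{T}(X,Y)\psi=\tfrac12 R^{T}(X\wedge Y)\cdot_{\mathcal{D}}\psi ,
\]
where $R^{T}(X,Y)$ denotes the curvature of the transverse Levi-Civita connection acting on $\mathcal{D}$, viewed as an element of $\bw^2\mathcal{D}$. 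The key point is that $\nabla^T$ is induced from a metric connection on $\mathcal{D}$, so its spinor curvature is the Clifford image of the $\so(\mathcal{D})$-valued curvature.

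Next, I restrict to $X,Y\in\mathcal{D}$ and take a local orthonormal frame $\{e_j\}$ of $\mathcal{D}$. I Clifford-multiply $\sum_j e_j\cdot_{\mathcal{D}} R^T(X\wedge e_j)\cdot_{\mathcal{D}}\psi=0$ and use the algebraic Bianchi identity for the transverse curvature. The curvature of $\nabla^T$ restricted to $\mathcal{D}$ is the curvature of the local quotient Kähler metric, which is torsion-free, so the Bianchi identity holds. This gives the usual identity
\[
\sum_j e_j\cdot_{\mathcal{D}} R^T(X,e_j)\psi = -\tfrac12 \Ric^T(X)\cdot_{\mathcal{D}}\psi .
\]
Hence $\Ric^T(X)\cdot_{\mathcal{D}}\psi=0$ for all $X\in\mathcal{D}$. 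Since $\psi$ is nonzero and Clifford multiplication by a nonzero vector is injective (its square is $-|v|^2$), it follows that $\Ric^T=0$. Nonvanishing of $\psi$ at every point comes from $\nabla^T$ being metric, which makes $|\psi|$ constant.

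Finally, I translate $\Ric^T=0$ into the paper's definition, which is stated via $\Ric$. Using the standard O'Neill-type relation on Sasakian manifolds, $\Ric(X,Y)=\Ric^T(X,Y)-2g(X,Y)$ for $X,Y\in\mathcal{D}$, together with $\Ric(\xi)=2m\xi$, we get $\Ric=-2g+(2m+2)\eta\otimes\eta$. This is consistent with Remark~\ref{rema:trans_Ricciflat_and_null}, where $\Ric^T=(\lambda+2)g^T$. Both relations are standard, cf.\ \cite{BG08}.

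The main obstacle is justifying the Bianchi step for $\nabla^T$, since $\nabla^T$ is a partial connection along the foliation. I would handle it by working locally: on a small foliated chart, $M$ is locally a Riemannian submersion onto a Kähler base $B$, and $\nabla^T$ on basic sections is the pullback of the Levi-Civita connection of $B$. The transverse curvature terms with $X=\xi$ are not needed. Alternatively, I could avoid this by computing $\nabla^T$ from Lemma~\ref{lemm:transverse_spin_connection_on_Sasakian} and the curvature identity in Proposition~\ref{prop:curv_to_GKS}, but the local submersion argument is cleaner.
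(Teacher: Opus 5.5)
Your proof is correct, but it takes a different route from the paper's.

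\textbf{The paper's route.} The paper computes no transverse curvature. It observes that a transversely parallel spinor is a $(0,0)$-transversal Killing spinor and quotes the integrability formula of \cite{GH08}*{Theorem~3.4}. That formula directly gives $\Ric(Z)\cdot\psi=-2Z\cdot\psi$ for $Z\in\mathcal{D}$, in terms of the Riemannian Ricci tensor of $M$. Combined with $\Ric(\xi)=2m\xi$, this yields $\Ric=-2g+(2m+2)\eta\otimes\eta$.

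\textbf{Your route.} You rerun the classical ``parallel spinor $\Rightarrow$ Ricci-flat'' contraction for the spin connection induced by $\nabla^T$. This gives $\Ric^T(Z)\cdot_{\mathcal{D}}\psi=0$, hence $\Ric^T=0$. You then convert to $\Ric$ using the Sasakian O'Neill relation $\Ric|_{\mathcal{D}}=\Ric^T-2g^T$ (the fact from \cite{BG08}*{Theorem~7.3.12} already used in Remark~\ref{rema:trans_Ricciflat_and_null}) together with $\Ric(\xi)=2m\xi$.

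\textbf{Comparison.} The paper's argument is a one-line citation of a formula that covers all transversal Killing spinors. Yours is essentially self-contained, relying only on standard facts about Riemannian foliations and submersions. In effect you reprove the $(0,0)$ case of the cited formula. Your version also makes transparent that the lemma is simply the transverse analogue of the Ricci-flatness of manifolds carrying parallel spinors.

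\textbf{A wording correction.} $\nabla^T$ is a genuine connection on $\mathcal{D}$, defined in all directions; it is the Bott connection that is partial. What you actually need is only the first Bianchi identity for $R^T$ on $\mathcal{D}\times\mathcal{D}\times\mathcal{D}$. Your local-submersion argument justifies this correctly, since $\nabla^T=\pi^*\nabla^B$ on basic fields, because $[\xi,Z]^{\mathcal{D}}=0$ for basic $Z$.
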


\begin{proof}
A transversely parallel spinor is a $(0,0)$-transversal Killing spinor.
By the integrability formula for transversal Killing spinors on Sasakian manifolds \cite{GH08}*{Theorem~3.4}, for every $Z\in \mathcal{D}$, $\Ric(Z) \cdot \psi = -2Z \cdot \psi$.
Since $\psi$ has no zeroes, we have $\Ric(Z)=-2Z$. 
On the other hand, every Sasakian manifold satisfies $\Ric(\xi)=2m\xi$.
Therefore, we have $\Ric=-2g+(2m+2)\eta\otimes\eta$.
\end{proof}

By direct computation using Lemma \ref{lemm:spinor_bundle_decomposition_on_Sasakian_manifold}, for any spinor fields $\psi_0 \in \Gamma(\Sigma_0)$ and $\psi_m \in \Gamma(\Sigma_m)$, we have
\begin{align*}
  &\frac{1}{2}\eta(X)\Phi \cdot \psi_0 - \frac{1}{2}\xi \cdot \varphi(X) \cdot \psi_0 = -\frac{1}{4}(-2X + 2(m+1)\eta(X)\xi) \cdot \psi_0,\\
  &\frac{1}{2}\eta(X)\Phi \cdot \psi_m - \frac{1}{2}\xi \cdot \varphi(X) \cdot \psi_m = \frac{(-1)^m}{4} (-2X + 2(m+1)\eta(X)\xi) \cdot \psi_m.
\end{align*}
Thus if $\psi_0$ (resp. $\psi_m$) is a transversely parallel spinor, then $\psi_0$ (resp. $\psi_m$) is a Ricci Killing spinor with $\mu=-\frac{1}{4}$ (resp. $\mu=\frac{(-1)^m}{4}$).
Conversely, if $\psi$ is a Ricci Killing spinor on a transversely Ricci-flat Sasakian manifold (cf. Propositions \ref{prop:RKS_on_3dim_trans_Ricciflat}, \ref{prop:RKS_on_trans_Ricciflat}), then $\psi$ is a transversely parallel spinor.

\begin{prop}
Let $(M^{2m+1},g,\xi,\eta,\varphi)\,\,(m \geq 1)$ be a transversely Ricci-flat Sasakian spin manifold.
If $m$ is even, the Ricci Killing spinors with $\mu=-\frac14$ are precisely the transversely parallel spinors in $\Sigma_0$, while those with $\mu=\frac14$ are precisely the transversely parallel spinors in $\Sigma_m$.
If $m$ is odd, the Ricci Killing spinors with $\mu=-\frac14$ are precisely the transversely parallel spinors in $\Sigma_0\oplus\Sigma_m$, and there is no Ricci Killing spinor with $\mu=\frac14$.
\end{prop}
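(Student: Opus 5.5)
The plan is to compare the two connections on $\Sigma M$ using Lemma~\ref{lemm:transverse_spin_connection_on_Sasakian}, which says that for every $X\in TM$
\[\nabla_X\psi-\nabla^T_X\psi=\tfrac12\eta(X)\Phi\cdot\psi-\tfrac12\xi\cdot\varphi(X)\cdot\psi .\]
On a transversely Ricci-flat manifold we have $\Ric(X)=-2X+2(m+1)\eta(X)\xi$. The displayed identities for $\psi_0\in\Gamma(\Sigma_0)$ and $\psi_m\in\Gamma(\Sigma_m)$ then say that, on these subbundles, the right-hand side equals $\mu\Ric(X)\cdot\psi$ with $\mu=-\frac14$ on $\Sigma_0$ and $\mu=\frac{(-1)^m}{4}$ on $\Sigma_m$. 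I would first verify these identities directly from Lemma~\ref{lemm:spinor_bundle_decomposition_on_Sasakian_manifold}. For $X=\xi$ this uses $\Phi\cdot|_{\Sigma_0}=im$ and $\xi\cdot|_{\Sigma_0}=-i$. For $X\in\mathcal D$ it uses the defining relation $\varphi(X)\cdot\psi_0=iX\cdot\psi_0$, together with the analogous relation on $\Sigma_m$.

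The next step is to note that $\nabla^T$ preserves $\Sigma_0$ and $\Sigma_m$. The reason is that $\nabla^T\varphi=0$ and $\nabla^T$ preserves $\xi$, so $\nabla^T$ commutes with the action of $\Phi$ and of $\xi$. With that in hand, write $\psi=\sum_r\psi_r$. On $\Sigma_0\oplus\Sigma_m$, with the appropriate sign of $\mu$ on each summand, the Ricci Killing equation $\nabla_X\psi=\mu\Ric(X)\cdot\psi$ is then equivalent to $\nabla^T_X\psi=0$.

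It remains to locate the Ricci Killing spinors. Scaling them gives $\mu^2=\frac1{16}$, and the argument preceding Proposition~\ref{prop:RKS_on_trans_Ricciflat} decomposes $\Phi\cdot\psi=\pm m\xi\cdot\psi$.
\begin{itemize}
\item For $m\ge2$ this shows that a Ricci Killing spinor with $\mu=\frac14$ must lie in $\Sigma_m$, and only when $m$ is even.
\item For $m\ge2$ and $\mu=-\frac14$, the spinor lies in $\Sigma_0$ when $m$ is even, and in $\Sigma_0\oplus\Sigma_m$ when $m$ is odd.
\end{itemize}
The $\Sigma_m$ case has sign $\frac{(-1)^m}{4}$, and this matches $-\frac14$ exactly when $m$ is odd, which is consistent with the case split.

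The case $m=1$ needs separate treatment, and this is where I expect the main care. By Corollary~\ref{coro:sqk_on_etaEinstein} only $\mu=-\frac14$ occurs, and Proposition~\ref{prop:fundamental_identities_for_SqK_spinors}(2) gives $\Phi\cdot\psi=-\xi\cdot\psi$. Here $\Sigma_0$ and $\Sigma_1$ are both line bundles. On $\Sigma_0$ we have $\Phi=i$ and $\xi=-i$; on $\Sigma_1$ we have $\Phi=-i$ and $\xi=i$. So the relation $\Phi\cdot\psi=-\xi\cdot\psi$ holds on all of $\Sigma M=\Sigma_0\oplus\Sigma_1$, and the spinor may have both components. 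Since $m=1$ is odd and $\Sigma_m=\Sigma_1$, this fits the statement. The sign on $\Sigma_1$ is $\frac{(-1)^1}{4}=-\frac14$, so the equivalence with transverse parallelism holds componentwise.

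Conversely, a transversely parallel spinor in the stated subbundle is a Ricci Killing spinor with the stated $\mu$, by the identities above. When $m$ is odd, no Ricci Killing spinor with $\mu=\frac14$ exists: for $m\ge3$ this follows from the decomposition argument, and for $m=1$ from Corollary~\ref{coro:sqk_on_etaEinstein}.
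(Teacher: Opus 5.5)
Your proposal is correct and follows essentially the paper's route. The pointwise identities on $\Sigma_0$ and $\Sigma_m$ derived from Lemma~\ref{lemm:transverse_spin_connection_on_Sasakian} give the equivalence with transverse parallelism, and the relation $\Phi\cdot\psi=\pm m\,\xi\cdot\psi$ from Proposition~\ref{prop:fundamental_identities_for_SqK_spinors} localizes the spinor. Three minor points: $\mu^2=\frac1{16}$ comes from the third identity of Proposition~\ref{prop:Basic_properties_of_RKS}, not from any scaling; the $\nabla^T$-invariance of $\Sigma_r$ is correct but not needed; and the separate appeal to Corollary~\ref{coro:sqk_on_etaEinstein} for $m=1$ is unnecessary, since the localization argument works verbatim there.
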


\section{Ricci Killing spinors on 3-dimensional Lie groups}\label{sec:RKS_on_3dim_Lie_groups}
\subsection{Invariant Ricci Killing spinors}\label{sec:invariant_RKS_on_3dim_Lie_groups}
Let $G$ be a connected 3-dimensional Lie group with Lie algebra $\g$, endowed with a left-invariant Riemannian metric $g$.
Fix an orientation of $G$ and a positively oriented orthonormal basis $(e_1,e_2,e_3)$ of $\g$.
By left translations, this basis determines a global positively oriented orthonormal frame on $G$, and hence the positively oriented orthonormal frame bundle is trivialized as $P_{\SO}G \cong G\times \SO(3)$.
Throughout this section, we equip $G$ with the spin structure
\[P_{\Spin}G:=G\times\Spin(3) \longrightarrow G\times\SO(3)\cong P_{\SO}G, \qquad (x,s)\longmapsto (x,\pi(s)),\]
where $\pi\colon\Spin(3)\to\SO(3)$ is the standard double covering.
Accordingly, the spinor bundle is trivialized as $\Sigma G\cong G\times\Sigma_3$.
A spinor field which is constant with respect to this trivialization is called an \emph{invariant spinor}.

We first briefly review some results of Artacho \cite{Art25}.
Write the structure constants of $\g$ with respect to $(e_1,e_2,e_3)$ as
\[[e_i,e_j]=\sum_{k=1}^3 c_{ij}^k e_k.\]
Artacho showed that if a non-zero invariant generalized Killing spinor exists, then every invariant spinor is a generalized Killing spinor with the same associated endomorphism $A$ (see \cite{Art25}*{Lemma~4.1}).
Moreover, Artacho showed that there exist non-zero invariant generalized Killing spinors if and only if
\begin{equation}
  c_{13}^1 + c_{23}^2 = 0,\qquad c_{12}^1 - c_{23}^3 = 0,\qquad c_{12}^2 + c_{13}^3 = 0. \label{eq:condition_GKS_on_3dim_Lie_groups}
\end{equation}
We observe that the above condition is precisely the unimodularity condition for connected Lie groups, i.e., $\tr(\ad_X) = 0$ for all $X \in \g$.

\begin{prop}\label{prop:invariant_GKS_unimodular}
Let $G$ be a connected 3-dimensional Lie group endowed with a left-invariant Riemannian metric.
Then $G$ admits a non-zero invariant generalized Killing spinor if and only if $G$ is unimodular.
\end{prop}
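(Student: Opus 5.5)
By Artacho's criterion quoted above, $G$ admits a non-zero invariant generalized Killing spinor if and only if the structure constants satisfy \eqref{eq:condition_GKS_on_3dim_Lie_groups}. So the plan is to show that these three linear equations are exactly the unimodularity condition $\tr(\ad_X)=0$ for all $X\in\g$. Since $\tr\circ\ad\colon\g\to\R$ is linear, it suffices to check $\tr(\ad_{e_j})=0$ for $j=1,2,3$. I would compute each trace in the basis $(e_1,e_2,e_3)$, using $\tr(\ad_{e_j})=\sum_k c_{jk}^k$, the skew-symmetry $c_{ij}^k=-c_{ji}^k$, and $c_{jj}^k=0$.

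Carrying this out:
\begin{align*}
\tr(\ad_{e_1}) &= c_{12}^2+c_{13}^3,\\
\tr(\ad_{e_2}) &= c_{21}^1+c_{23}^3 = -c_{12}^1+c_{23}^3,\\
\tr(\ad_{e_3}) &= c_{31}^1+c_{32}^2 = -(c_{13}^1+c_{23}^2).
\end{align*}
Up to sign, these are exactly the third, second, and first expressions in \eqref{eq:condition_GKS_on_3dim_Lie_groups}. Hence \eqref{eq:condition_GKS_on_3dim_Lie_groups} holds if and only if $\tr(\ad_{e_j})=0$ for every $j$, which is equivalent to $\tr(\ad_X)=0$ for all $X\in\g$.

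For a connected Lie group, unimodularity is equivalent to $\det\mathrm{Ad}_g=1$ for all $g$. Because the group is connected and generated by $\exp\g$, this in turn is equivalent to $\tr\ad_X=0$ for all $X\in\g$. This is a standard fact, and I would simply quote it. Note also that the computation does not depend on the choice of orthonormal basis, since unimodularity is basis-free.

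No step is a real obstacle; the only care needed is bookkeeping of index order and signs in the structure constants.
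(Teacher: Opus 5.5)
Your proposal is correct and follows essentially the same route as the paper: both invoke Artacho's criterion and observe that the three conditions in \eqref{eq:condition_GKS_on_3dim_Lie_groups} are, up to sign, exactly $\tr(\ad_{e_1})$, $\tr(\ad_{e_2})$, $\tr(\ad_{e_3})$. The paper packages this as the single coordinate formula $\tr(\ad_X)=X_1(c^2_{12}+c^3_{13})+X_2(c^1_{21}+c^3_{23})+X_3(c^1_{31}+c^2_{32})$, and your index and sign bookkeeping agrees with it.
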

\begin{proof}
With respect to the basis $(e_1,e_2,e_3)$, a straightforward computation gives
\[\tr(\ad_X) = X_1(c^2_{12} + c^3_{13}) + X_2(c^1_{21} + c^3_{23}) + X_3(c^1_{31} + c^2_{32}) \qquad (\forall X \in \g),\]
where $X = X_1 e_1 + X_2 e_2 + X_3 e_3$.
Hence the three conditions \eqref{eq:condition_GKS_on_3dim_Lie_groups} are equivalent to the unimodularity of $G$.
\end{proof}

Since every Ricci Killing spinor is a generalized Killing spinor, Proposition~\ref{prop:invariant_GKS_unimodular} implies that a 3-dimensional Lie group with left-invariant metric admitting an invariant Ricci Killing spinor must be unimodular.

We now restrict our attention to unimodular Lie groups.
Milnor \cite{Mil76} showed that there exists a positively oriented left-invariant orthonormal frame
$(e_1,e_2,e_3)$ of $G$ such that
\[[e_2,e_3]=\lambda_1e_1,\qquad [e_3,e_1]=\lambda_2e_2,\qquad [e_1,e_2]=\lambda_3e_3\]
for some $\lambda_1,\lambda_2,\lambda_3\in\R$.
We call such a frame a \emph{Milnor frame} and the constants $\lambda_1,\lambda_2,\lambda_3$ the \emph{Milnor constants}.
The Milnor constants determine the isomorphism type of $\g$ by their signs.
After reordering the Milnor frame and, if necessary, reversing the orientation, the possible sign patterns are listed in Table~\ref{table:three_dimensional_unimodular_Lie_groups} (see \cite{Mil76}*{p.~307}).
Here, $\ISO(2) = \R^2 \rtimes \SO(2)$ denotes the identity component of the isometry group of the Euclidean plane $\R^2$, and $\ISO_0(1,1) = \R^{1,1} \rtimes \SO_0(1,1)$ denotes the identity component of the isometry group of the Minkowski plane $\R^{1,1}$.
Their Lie algebras are denoted by $\iso(2)$ and $\iso(1,1)$, respectively.
\begin{table}[htbp]
    \centering
    \setlength{\tabcolsep}{2.0em}
    \newcommand{\fw}[1]{\makebox[0.8em][c]{$#1$}}
    \caption{3-dimensional unimodular Lie algebras}
    \begin{tabular}{ccc}
        \toprule
        Signs of $\lambda_1,\lambda_2,\lambda_3$
        & Lie algebra $\g$
        & Associated Lie groups \\
        \midrule
        \fw{+}, \fw{+}, \fw{+}
        & $\su(2)$
        & $\SU(2)$, $\SO(3)$ \\

        \fw{+}, \fw{+}, \fw{-}
        & $\sl(2,\R)$
        & $\SL(2,\R)$, $\SO_0(2,1)$ \\

        \fw{+}, \fw{+}, \fw{0}
        & $\iso(2)$
        & $\ISO(2)$ \\

        \fw{+}, \fw{-}, \fw{0}
        & $\iso(1,1)$
        & $\ISO_0(1,1)$ \\

        \fw{+}, \fw{0}, \fw{0}
        & $\heis_3$
        & Heisenberg group \\

        \fw{0}, \fw{0}, \fw{0}
        & $\R^3$
        & $\R^3$ \\
        \bottomrule
    \end{tabular}
    \label{table:three_dimensional_unimodular_Lie_groups}
\end{table}

For convenience, we set
\[a=-\lambda_1+\lambda_2+\lambda_3,\qquad b=\lambda_1-\lambda_2+\lambda_3,\qquad c=\lambda_1+\lambda_2-\lambda_3.\]
Substituting the structure constants $c_{23}^1=\lambda_1, c_{31}^2=\lambda_2, c_{12}^3=\lambda_3$ into the formula for the associated endomorphism $A$ in \cite{Art25}*{(4.3)}, we obtain
\begin{equation}\label{eq:A_Milnor_frame}
    A
    =
    \frac14
    \begin{pmatrix}
        a & 0 & 0\\
        0 & b & 0\\
        0 & 0 & c
    \end{pmatrix}
\end{equation}
with respect to the Milnor frame $(e_1,e_2,e_3)$.
The Ricci tensor is given by\footnote{We note that \cite{Art25}*{(4.4)} contains a typographical error in the formula for the Ricci tensor.}
\begin{equation}\label{eq:Ric_Milnor_frame}
    \Ric = \frac{1}{2}
    \begin{pmatrix}
        \lambda_1^2-(\lambda_2-\lambda_3)^2 & 0 & 0\\
        0 & \lambda_2^2-(\lambda_1-\lambda_3)^2 & 0\\
        0 & 0 & \lambda_3^2-(\lambda_1-\lambda_2)^2
    \end{pmatrix}
    = \frac{1}{2}
    \begin{pmatrix}
        bc & 0 & 0\\
        0 & ca & 0\\
        0 & 0 & ab
    \end{pmatrix}.
\end{equation}
The invariant spinors are Ricci Killing spinors if and only if there exists a constant $\mu\in\R\setminus\{0\}$ such that $A = \mu \Ric$.
By \eqref{eq:A_Milnor_frame} and \eqref{eq:Ric_Milnor_frame}, this is equivalent to
\begin{equation}\label{eq:RKS_Milnor_system}
  a=  2\mu bc,\qquad b = 2\mu ca,\qquad c = 2\mu ab.
\end{equation}
Thus, the classification of invariant Ricci Killing spinors is reduced to solving the algebraic system \eqref{eq:RKS_Milnor_system}.
Multiplying the three equations in \eqref{eq:RKS_Milnor_system} by $a$, $b$, and $c$, respectively, we obtain
\[a^2 = 2\mu abc,\qquad b^2 = 2\mu abc,\qquad c^2 = 2\mu abc.\]
Hence, $a^2 = b^2 =c ^2$.
If $a = b = c = 0$, then
\[
    \lambda_1=\lambda_2=\lambda_3=0,
\]
and hence $\g$ is commutative by Table~\ref{table:three_dimensional_unimodular_Lie_groups}.
In this case, $A = \Ric = 0$, so every invariant spinor is parallel and is a Ricci Killing spinor for every $\mu\in\R\setminus\{0\}$.
Suppose now that $a,b,c$ are non-zero.
Since $a^2=b^2=c^2$, there are two cases up to a permutation of $a,b,c$.
\begin{enumerate}
  \item First, if $a = b = c$, then
  \[\lambda_1 = \lambda_2 = \lambda_3 =: \lambda \neq 0,\]
  and Table~\ref{table:three_dimensional_unimodular_Lie_groups} shows that $\g\cong\su(2)$. In this case,
  \[A = \frac{\lambda}{4}\Id,\qquad \Ric = \frac{\lambda^2}{2}\Id,\]
  and thus
  \[A = \frac{1}{2\lambda}\Ric.\]
  Therefore, every invariant spinor is a Ricci Killing spinor with $\mu = \frac{1}{2\lambda}$.
  Moreover, it is a Killing spinor with Killing number $\frac{\lambda}{4}$.
  
  \item Next, suppose that exactly one of $a,b,c$ has a sign different from the other two. 
  Up to a permutation, we may assume that $a = -b = -c$.
  It follows that
  \[\lambda_2 = \lambda_3 = 0,\qquad \lambda_1 =: \lambda \neq 0.\]
  Hence, by Table~\ref{table:three_dimensional_unimodular_Lie_groups}, $\g$ is isomorphic to the 3-dimensional Heisenberg Lie algebra $\heis_3$.
  In this case,
  \[A = \frac{1}{4}
      \begin{pmatrix}
        -\lambda & 0 & 0\\
        0 & \lambda & 0\\
        0 & 0 & \lambda
      \end{pmatrix},
      \qquad \Ric = \frac{1}{2}
      \begin{pmatrix}
          \lambda^2 & 0 & 0\\
          0 & -\lambda^2 & 0\\
          0 & 0 & -\lambda^2
      \end{pmatrix},\]
  and hence
  \[A = -\frac{1}{2\lambda}\Ric.\]
  Thus, every invariant spinor is a Ricci Killing spinor with $\mu = -\frac{1}{2\lambda}$. 
  Since $A$ is not a multiple of the identity, these spinors are not Killing spinors.
\end{enumerate}
We have proved the following classification.

\begin{theo}\label{theo:invariant_RKS_on_3dim_Lie_groups}
Let $G$ be a connected 3-dimensional Lie group endowed with a left-invariant Riemannian metric. Then $G$ admits a non-zero invariant Ricci Killing spinor if and only if one of the following holds:
\begin{enumerate}
    \item $\g$ is commutative. In this case, every invariant spinor is parallel and thus a Ricci Killing spinor.
    \item $\g\cong\su(2)$ and the Milnor constants satisfy $\lambda_1 = \lambda_2 = \lambda_3 =: \lambda \neq 0$. 
    In this case, every invariant spinor is a Ricci Killing spinor with constant $\mu = \frac{1}{2\lambda}$ and is a Killing spinor with Killing number $\frac{\lambda}{4}$.
    \item $\g \cong \heis_3$.
    In this case, every invariant spinor is a Ricci Killing spinor which is not a Killing spinor.
\end{enumerate}
\end{theo}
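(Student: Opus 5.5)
The argument assembles the computation just carried out, and I would organize it in three steps.

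First, reduce to the unimodular case. By Proposition~\ref{prop:invariant_GKS_unimodular}, a non-zero invariant Ricci Killing spinor is in particular an invariant generalized Killing spinor, so $G$ must be unimodular. Then \cite{Art25}*{Lemma~4.1} shows that every invariant spinor is a generalized Killing spinor with the same endomorphism $A$. Hence the existence of one invariant Ricci Killing spinor is equivalent to $A=\mu\Ric$ for some $\mu\neq0$, and in that case every invariant spinor is a Ricci Killing spinor.

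Second, choose a Milnor frame. Both $A$ and $\Ric$ are then diagonal, given by \eqref{eq:A_Milnor_frame} and \eqref{eq:Ric_Milnor_frame}. The condition $A=\mu\Ric$ becomes the algebraic system \eqref{eq:RKS_Milnor_system}. One point needs care: whether the Milnor frame can be taken positively oriented compatibly with the orientation used for the spin structure and the convention on $\omega^{\C}$. Reversing the orientation flips the signs of the $\lambda_i$ and of $A$. I would handle this by allowing $\lambda$ of either sign, so the statement is phrased orientation-independently.

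Third, solve the system. Multiplying the equations by $a,b,c$ respectively gives $a^2=b^2=c^2=2\mu abc$.
\begin{itemize}
\item \emph{Vanishing case.} If one of $a,b,c$ vanishes, all do. Then $\lambda_i=0$, so $\g$ is abelian and $A=\Ric=0$; invariant spinors are parallel and work for every $\mu$.
\item \emph{Equal case.} If $a=b=c\neq0$, then $\lambda_1=\lambda_2=\lambda_3=\lambda$. This gives $\su(2)$ with $A=\frac{\lambda}{4}\Id=\frac{1}{2\lambda}\Ric$, so the spinors are Killing spinors.
\item \emph{Mixed-sign case.} If one of $a,b,c$ differs in sign, say $a=-b=-c$, then $\lambda_2=\lambda_3=0$ and $\lambda_1\neq0$. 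This is $\heis_3$ with $A=-\frac{1}{2\lambda}\Ric$, which is not a multiple of $\Id$, so the spinors are not Killing spinors.
\item \emph{Remaining sign pattern.} The case where all three have the same sign but are negative, $a=b=c<0$, falls under the equal case.
\end{itemize}
In every non-vanishing case one must still verify that $\mu$ actually satisfies the original system, not just the squared relations. This is a direct check, for instance $a=2\mu bc$ with $\mu=\frac{1}{2\lambda}$.

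For the converse, every metric on $\heis_3$ has a Milnor frame with constants $(\lambda,0,0)$. Every $\su(2)$ metric in statement (2) is assumed round-type, and the abelian case is immediate.

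The main obstacle is bookkeeping rather than difficulty. It lies in making sure the permutation and orientation reductions of the Milnor frame do not change which spinors are invariant, since the spin structure is trivialized by a fixed frame. I would argue that a different left-invariant frame gives the same trivialization up to a constant element of $\Spin(3)$. Hence the set of invariant spinors is independent of the chosen left-invariant frame.
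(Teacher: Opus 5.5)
Your proposal is correct and follows the paper's own route. It reduces to unimodular groups via Proposition~\ref{prop:invariant_GKS_unimodular}, translates $A=\mu\Ric$ into the Milnor-frame system \eqref{eq:RKS_Milnor_system}, derives $a^2=b^2=c^2=2\mu abc$, and uses the same vanishing / all-equal / one-sign-different case split. Your added remarks on orientation and on the frame-independence of invariant spinors only make explicit what the paper leaves implicit.
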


\subsection{Non-invariant Ricci Killing spinors on 3-dimensional unimodular Lie groups}\label{sec:non-invariant_RKS_on_3dim_Lie_groups}
We now consider Ricci Killing spinors which are not necessarily invariant.
Throughout this subsection, we assume that $G$ is simply connected and unimodular, and fix a positively oriented Milnor frame $(e_1,e_2,e_3)$.

Using the trivialization $\Sigma G \cong G \times \Sigma_3$ fixed in the previous subsection, we identify a spinor field $\psi \in \Gamma(\Sigma G)$ with the corresponding smooth map $\psi\colon G \to \Sigma_3$, and use the same symbol $\psi$ for both.
With this identification, the spin connection is given by
\[\nabla_X\psi = X(\psi) + \Lambda(X)\cdot\psi,\]
where $\Lambda$ is a left-invariant endomorphism of $TG$.
With respect to the Milnor frame, we have
\[\Lambda = \frac{1}{4}\begin{pmatrix} a & 0 & 0 \\ 0 & b & 0 \\ 0 & 0 & c \end{pmatrix}.\]

\begin{rema}
  If $\psi$ is invariant, then $X(\psi) = 0$ for any vector field $X$, and hence $\nabla_X\psi = \Lambda(X)\cdot\psi$.
  Since $\Lambda$ is symmetric, it is precisely the associated endomorphism of any invariant generalized Killing spinor.
  This explains the expression for $A$ in \eqref{eq:A_Milnor_frame}.
\end{rema}

For a fixed $\mu \in \R \setminus \{0\}$, we define a connection $\nabla^\mu$ on $\Sigma G$ by
\[\nabla^\mu_X\psi := \nabla_X\psi - \mu\Ric(X)\cdot\psi = X(\psi) + \Lambda^\mu(X)\cdot\psi,\]
where $\Lambda^\mu := \Lambda - \mu\Ric$.
Then $\psi$ is a Ricci Killing spinor with constant $\mu$ if and only if it is parallel with respect to $\nabla^\mu$.
By \eqref{eq:Ric_Milnor_frame}, $\Lambda^\mu$ is diagonal with respect to the Milnor frame.
We write
\[\Lambda^\mu = \begin{pmatrix} a' & 0 & 0 \\ 0 & b' & 0 \\ 0 & 0 & c' \end{pmatrix},\]
where
\begin{equation}\label{eq:abc_prime}
a' = \frac{1}{4}(a - 2\mu bc),\qquad b' = \frac{1}{4}(b - 2\mu ca),\qquad c' = \frac{1}{4}(c - 2\mu ab).
\end{equation}
Let $R^\mu$ denote the curvature of $\nabla^\mu$.
Since $\Lambda^\mu$ is left-invariant, for every left-invariant vector field $X$, the components of $\Lambda^\mu(X)$ with respect to the Milnor frame are constant.
Hence, for left-invariant vector fields $X,Y$, we have
\[R^\mu(X,Y)\psi = \Lambda^\mu(X)\cdot\Lambda^\mu(Y)\cdot\psi - \Lambda^\mu(Y)\cdot\Lambda^\mu(X)\cdot\psi - \Lambda^\mu([X,Y])\cdot\psi.\]
Using $e_2\cdot e_3 = e_1$, $e_3\cdot e_1 = e_2$, and $e_1\cdot e_2 = e_3$, we obtain
\[R^\mu(e_2,e_3) = (2b'c' - \lambda_1a')e_1\cdot,\quad R^\mu(e_3,e_1) = (2c'a' - \lambda_2b')e_2\cdot,\quad R^\mu(e_1,e_2) = (2a'b' - \lambda_3c')e_3\cdot.\]

Since Clifford multiplication by a non-zero vector is invertible and $G$ is simply connected, $G$ admits a Ricci Killing spinor with constant $\mu$ if and only if $R^\mu = 0$.
Hence, this is equivalent to
\begin{equation}\label{eq:non_inv_RKS_solveq}
\lambda_1 a' = 2b'c',\qquad \lambda_2 b' = 2c'a',\qquad \lambda_3 c' = 2a'b'.
\end{equation}
A Ricci Killing spinor is invariant if and only if $a' = b' = c' = 0$.
Therefore, in what follows, we assume that at least one of $a',b',c'$ is non-zero.
\begin{enumerate}
  \item Suppose first that exactly two of $a',b',c'$ vanish.
  After a cyclic permutation of the Milnor frame if necessary, we may assume that $a' = b' = 0, c' \neq 0$.
  The third equation in \eqref{eq:non_inv_RKS_solveq} implies that $\lambda_3 = 0$.
  Hence,
  \[a = -\lambda_1 + \lambda_2,\qquad b = \lambda_1 - \lambda_2 = -a,\qquad c = \lambda_1 + \lambda_2.\]
  The equation $a' = b' = 0$ is equivalent to $a(1 + 2\mu c) = 0$.
  \begin{enumerate}
    \item Suppose first that $a = 0$.
    Then $\lambda_1 = \lambda_2 \neq 0$, and hence $\g \cong \iso(2)$ and $G$ is diffeomorphic to $\R^3$.
    In this case, \eqref{eq:Ric_Milnor_frame} gives $\Ric = 0$, so the metric is flat and the Ricci Killing spinors are parallel.
    
    \item Suppose next that $a \neq 0$.
    Then $c \neq 0$, and
    \[\mu = -\frac{1}{2c} = -\frac{1}{2(\lambda_1 + \lambda_2)}.\]
    Moreover,
    \[c' = \frac{\lambda_1\lambda_2}{\lambda_1 + \lambda_2}.\]
    Since $c' \neq 0$, we have $\lambda_1\lambda_2 \neq 0$.
    Together with $a \neq 0$ and $c \neq 0$, this implies $|\lambda_1| \neq |\lambda_2|$.
    Conversely, if $\lambda_1\lambda_2 \neq 0$ and $|\lambda_1| \neq |\lambda_2|$, then $\mu = -\frac{1}{2(\lambda_1 + \lambda_2)}$ gives a solution of \eqref{eq:non_inv_RKS_solveq}.
    If $\lambda_1\lambda_2 > 0$, then $\g \cong \iso(2)$, and if $\lambda_1\lambda_2 < 0$, then $\g \cong \iso(1,1)$.
    In both cases,
    \[\mu = -\frac{1}{2(\lambda_1 + \lambda_2)}, \qquad \Ric = \frac{1}{2}\begin{pmatrix} \lambda_1^2 - \lambda_2^2 & 0 & 0 \\ 0 & \lambda_2^2 - \lambda_1^2 & 0 \\ 0 & 0 & -(\lambda_1 - \lambda_2)^2 \end{pmatrix},\]
    so the metric is not Einstein.
    Thus, these Ricci Killing spinors are not Killing spinors.
  \end{enumerate}
  
  \item The case in which exactly one of $a',b',c'$ vanishes is impossible.
  For instance, if $a' = 0$, then the first equation in \eqref{eq:non_inv_RKS_solveq} gives $b'c' = 0$, which is a contradiction. 
  
  \item It remains to consider the case $a'b'c' \neq 0$.
  Multiplying the three equations in \eqref{eq:non_inv_RKS_solveq}, we obtain $\lambda_1\lambda_2\lambda_3 = 8a'b'c'$.
  In particular, $\lambda_1,\lambda_2,\lambda_3$ are all non-zero.
  Substituting this identity into \eqref{eq:non_inv_RKS_solveq}, we obtain
  \begin{equation}\label{eq:non_inv_RKS_solveq_allnon-zero}
    4(a')^2 = \lambda_2\lambda_3,\qquad 4(b')^2 = \lambda_1\lambda_3,\qquad 4(c')^2 = \lambda_1\lambda_2.
  \end{equation}
  Hence, $\lambda_1,\lambda_2,\lambda_3$ have the same sign, and therefore $\g \cong \su(2)$.
  Using
  \begin{equation}\label{eq:su2_lambda}
    \lambda_1 = \frac{b+c}{2},\qquad \lambda_2 = \frac{c+a}{2},\qquad \lambda_3 = \frac{a+b}{2},
  \end{equation}
  the equations \eqref{eq:non_inv_RKS_solveq_allnon-zero} can be rewritten as
  \[(a - 2\mu bc)^2 = (a+b)(a+c),\quad (b - 2\mu ca)^2 = (b+a)(b+c),\quad (c - 2\mu ab)^2 = (c+a)(c+b).\]
  Expanding these equations and comparing them, we obtain $b^2c^2 = a^2c^2 = a^2b^2$.
  If one of $a,b,c$ is zero, these equalities imply that at least two of them vanish.
  Combining this with \eqref{eq:su2_lambda} implies that one of $\lambda_1,\lambda_2,\lambda_3$ vanishes, which is a contradiction.
  Hence $abc \neq 0$, and therefore $a^2 = b^2 = c^2$.
  \begin{enumerate}
    \item If one of $a,b,c$ has a sign different from the other two, then two of $\lambda_1, \lambda_2, \lambda_3$ vanish, which is a contradiction.
    \item If $a = b = c$, then $\lambda_1 = \lambda_2 = \lambda_3 =: \lambda \neq 0$.
    In this case, we have 
    \[a' = b' = c' = \frac{\lambda}{4}(1-2\mu\lambda),\]
    and \eqref{eq:non_inv_RKS_solveq} gives $\lambda a' = 2(a')^2$.
    Since $a' \neq 0$, we obtain
    \[\mu = -\frac{1}{2\lambda}.\]
    Since $\Ric = \frac{\lambda^2}{2}\Id$, these Ricci Killing spinors are Killing spinors with Killing number $-\lambda/4$.
  \end{enumerate}
\end{enumerate}

We have proved the following classification.

\begin{theo}\label{theo:non_invariant_RKS_on_3dim_unimodular_Lie_groups}
Let $G$ be a simply connected 3-dimensional unimodular Lie group endowed with a left-invariant Riemannian metric.
Then $G$ admits a non-zero non-invariant Ricci Killing spinor if and only if one of the following holds:
\begin{enumerate}
    \item $\g \cong \su(2)$ and the Milnor constants satisfy $\lambda_1 = \lambda_2 = \lambda_3 =: \lambda \neq 0$.
    In this case, $\mu = -\frac{1}{2\lambda}$, and the non-invariant Ricci Killing spinors are Killing spinors with Killing number $-\frac{\lambda}{4}$.

    \item $\g \cong \iso(2)$.
    If the two non-zero Milnor constants are equal, then the metric is flat and the Ricci Killing spinors are parallel.
    Otherwise, the Ricci Killing spinors are not Killing spinors.

    \item $\g \cong \iso(1,1)$ and the sum of the two non-zero Milnor constants is non-zero.
    In this case, the Ricci Killing spinors are not Killing spinors.
\end{enumerate}
In both the non-flat $\iso(2)$ case and the $\iso(1,1)$ case, after a cyclic permutation of the Milnor frame, let $\lambda_1,\lambda_2$ denote the two non-zero Milnor constants.
Then
\[\mu = -\frac{1}{2(\lambda_1 + \lambda_2)},\qquad \Ric = \frac{1}{2}\begin{pmatrix} \lambda_1^2 - \lambda_2^2 & 0 & 0 \\ 0 & \lambda_2^2 - \lambda_1^2 & 0 \\ 0 & 0 & -(\lambda_1 - \lambda_2)^2 \end{pmatrix}.\]
For each associated endomorphism $A = \mu\Ric$ occurring in the above cases, the space of Ricci Killing spinors has complex dimension $2$.
\end{theo}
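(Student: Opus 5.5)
The classification has essentially been carried out in the case analysis preceding the statement, so the proof will assemble it and add two complements: the list of $\g$ attained, and the dimension count. I will start from the reduction already used. Since $G$ is simply connected and $\Sigma G \cong G\times\Sigma_3$, Ricci Killing spinors with constant $\mu$ are exactly the $\nabla^\mu$-parallel sections. As $\nabla^\mu$ is a connection on a trivial rank-$2$ bundle over a simply connected base, a non-zero parallel section exists iff the curvature annihilates it. I will upgrade this to the statement that existence is equivalent to $R^\mu=0$, using that $R^\mu(e_i,e_j)$ is a scalar multiple of Clifford multiplication by $e_k$, which is invertible unless the scalar is zero. Thus existence is equivalent to the algebraic system \eqref{eq:non_inv_RKS_solveq}. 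Non-invariance corresponds to $(a',b',c')\neq 0$: if $\Lambda^\mu=0$, parallel means constant, and if $\Lambda^\mu\ne0$, a constant $\psi$ with $\Lambda^\mu(X)\cdot\psi=0$ for all $X$ is impossible by invertibility.

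The ``only if'' direction is then the case analysis above. Exactly one vanishing is impossible. Exactly two vanishing gives, after a cyclic permutation, $\lambda_3=0$, and then either $\lambda_1=\lambda_2\neq0$ (flat $\iso(2)$) or $\mu=-1/(2(\lambda_1+\lambda_2))$ with $\lambda_1\lambda_2\ne0$ and $|\lambda_1|\ne|\lambda_2|$. None vanishing forces $\su(2)$ with all Milnor constants equal and $\mu=-1/(2\lambda)$. I will translate $|\lambda_1|\ne|\lambda_2|$ into the table conditions. For $\lambda_1\lambda_2>0$ it is $\lambda_1\ne\lambda_2$, or the flat case when equal. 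For $\lambda_1\lambda_2<0$ it is $\lambda_1+\lambda_2\neq0$. This shows that the $\iso(1,1)$ case with $\lambda_1+\lambda_2=0$ admits none.

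For the ``if'' direction, I will exhibit the solutions. In the flat $\iso(2)$ case, any $\mu$ with $c'=\frac14 c\ne0$ and $a'=b'=0$ works. In the other cases, plugging in the stated $\mu$ verifies \eqref{eq:non_inv_RKS_solveq} directly. In each case $\Lambda^\mu\neq0$, so the parallel spinors are non-invariant. The Ricci formula follows from \eqref{eq:Ric_Milnor_frame} with $\lambda_3=0$. Non-Einstein follows because $\lambda_1^2-\lambda_2^2\neq0$, while the other diagonal entry is negative. Also, $\Ric$ has a trace-free part, so the spinors are not Killing.

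The dimension statement follows because $R^\mu=0$ on a simply connected manifold gives a flat trivialization, so parallel sections are determined by their value at one point. The space is therefore isomorphic to $\Sigma_3\cong\C^2$. The main subtle point is the converse step, making sure every sign case of the Milnor constants allowed by $|\lambda_1|\neq|\lambda_2|$ lands in the right row. I would check this carefully by hand, including the orientation and cyclic-permutation conventions.
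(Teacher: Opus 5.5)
Your proposal is correct and follows the paper's own argument. Like the paper, you reduce existence to $R^\mu=0$ using invertibility of Clifford multiplication and simple connectivity, solve the system \eqref{eq:non_inv_RKS_solveq} by counting how many of $a',b',c'$ vanish, and read off the sign patterns, including $|\lambda_1|\neq|\lambda_2|$ becoming $\lambda_1\neq\lambda_2$ or $\lambda_1+\lambda_2\neq0$. Your dimension count (a flat connection on the trivial rank-$2$ bundle over a simply connected $G$ gives parallel sections isomorphic to $\Sigma_3\cong\C^2$) is the justification the paper leaves implicit, and it agrees with the two-parameter families in Section~\ref{sec:explicit_expressions_RKS_on_3dim_Lie_groups}.
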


Combining Theorems~\ref{theo:invariant_RKS_on_3dim_Lie_groups} and~\ref{theo:non_invariant_RKS_on_3dim_unimodular_Lie_groups}, we obtain a complete classification of Ricci Killing spinors on simply connected 3-dimensional unimodular Lie groups with left-invariant metrics.
The classification is summarized in Table~\ref{table:RKS_on_3dim_unimodular_Lie_groups}.

\subsection{Explicit expressions for Ricci Killing spinors}\label{sec:explicit_expressions_RKS_on_3dim_Lie_groups}
In this subsection, we give explicit expressions for the non-invariant non-Killing Ricci Killing spinors obtained in Section~\ref{sec:non-invariant_RKS_on_3dim_Lie_groups}.
These occur on the simply connected Lie groups with Lie algebras $\iso(2)$ and $\iso(1,1)$.

\paragraph{The $\iso(2)$ case.}
We realize the simply connected Lie group $G = \widetilde{\ISO(2)}$ with Lie algebra $\iso(2)$ as $\R^3$ with the group multiplication
\[(x,y,z)\cdot(x',y',z') = (x + x' \cos z - y' \sin z, y + x' \sin z + y' \cos z, z + z').\]
Note that the inverse of $(x,y,z)$ is given by $(-x \cos z - y \sin z, x \sin z -y \cos z, -z)$.
Let $X,Y,Z$ denote the left-invariant vector fields corresponding to the coordinate vectors $\partial_x,\partial_y,\partial_z$ at the identity.
Explicitly,
\[X = \cos z\frac{\partial}{\partial x}+\sin z\frac{\partial}{\partial y},\qquad Y = -\sin z\frac{\partial}{\partial x}+\cos z\frac{\partial}{\partial y},\qquad Z = \frac{\partial}{\partial z}.\]
These vector fields satisfy
\[[X,Y] = 0,\qquad [Y,Z] = X,\qquad [Z,X] = Y.\]
Let $\lambda_1,\lambda_2 > 0$ with $\lambda_1 \neq \lambda_2$, and set
\[e_1 = \frac{1}{\sqrt{2}}(X+Y),\qquad e_2 = -\sqrt{\frac{\lambda_1}{2\lambda_2}}(X-Y),\qquad e_3 = \sqrt{\lambda_1\lambda_2}Z.\]
Then $(e_1,e_2,e_3)$ is a left-invariant frame on $G$ satisfying
\[[e_2,e_3] = \lambda_1 e_1,\qquad [e_3,e_1] = \lambda_2 e_2,\qquad [e_1,e_2] = 0.\]
We equip $G$ with the left-invariant metric and orientation for which $(e_1,e_2,e_3)$ is a positively oriented orthonormal frame.
Then $(e_1,e_2,e_3)$ is a Milnor frame with Milnor constants $(\lambda_1,\lambda_2,0)$.
Explicitly,
\begin{align*}
  e_1 &= \frac{1}{\sqrt{2}}\left((\cos z - \sin z)\frac{\partial}{\partial x} + (\cos z + \sin z)\frac{\partial}{\partial y}\right),\\
  e_2 &= -\sqrt{\frac{\lambda_1}{2\lambda_2}}\left((\cos z + \sin z)\frac{\partial}{\partial x} + (\sin z - \cos z)\frac{\partial}{\partial y}\right),\\
  e_3 &= \sqrt{\lambda_1\lambda_2}\frac{\partial}{\partial z}.  
\end{align*}
For $\mu = -\frac{1}{2(\lambda_1 + \lambda_2)}$, we have
\[\Lambda^\mu = \frac{\lambda_1\lambda_2}{\lambda_1 + \lambda_2}\begin{pmatrix} 0 & 0 & 0 \\ 0 & 0 & 0 \\ 0 & 0 & 1 \end{pmatrix}.\]
Hence, the Ricci Killing spinor equation is equivalent to
\begin{equation}\label{eq:e2_solveq}
  e_1(\psi) = 0,\qquad e_2(\psi) = 0,\qquad e_3(\psi) + \frac{\lambda_1\lambda_2}{\lambda_1 + \lambda_2}e_3\cdot\psi = 0.
\end{equation}
The first two equations imply that $\partial_x\psi = \partial_y\psi = 0$, and hence $\psi$ depends only on $z$.
We identify $\Sigma_3$ with $\C^2$ and choose a representation of the Clifford algebra
\[e_1\cdot = \begin{pmatrix} 0 & 1 \\ -1 & 0 \end{pmatrix},\qquad e_2\cdot = \begin{pmatrix} 0 & i \\ i & 0 \end{pmatrix},\qquad e_3\cdot = \begin{pmatrix} i & 0 \\ 0 & -i \end{pmatrix}.\]
Writing $\psi(z) = \begin{pmatrix} \psi_1(z) & \psi_2(z) \end{pmatrix}^{\top}$, we can rewrite the third equation in \eqref{eq:e2_solveq} as
\[\sqrt{\lambda_1\lambda_2}\frac{d\psi_1}{dz}(z) + \frac{\lambda_1\lambda_2}{\lambda_1 + \lambda_2}i\psi_1(z) = 0,\qquad \sqrt{\lambda_1\lambda_2}\frac{d\psi_2}{dz}(z) - \frac{\lambda_1\lambda_2}{\lambda_1 + \lambda_2}i\psi_2(z) = 0.\]
Therefore, the Ricci Killing spinors are given by
\[\psi(x,y,z) = \begin{pmatrix} C_1\psi_1(z) \\ C_2\psi_2(z) \end{pmatrix},\]
\[\psi_1(z) = \exp\left(-\frac{\sqrt{\lambda_1\lambda_2}}{\lambda_1 + \lambda_2}iz\right), \quad \psi_2(z) = \exp\left(\frac{\sqrt{\lambda_1\lambda_2}}{\lambda_1 + \lambda_2}iz\right).\]
Here, $(C_1,C_2) \in \C^2 \setminus \{(0,0)\}$.

\begin{rema}
The left-invariant metric considered above is defined by requiring the Milnor frame $(e_1,e_2,e_3)$ to be orthonormal.
With respect to the coordinate frame $(\partial_x,\partial_y,\partial_z)$ on $\R^3$, its matrix is
\[
g = \begin{pmatrix} \frac{1}{2}\left(1 + \frac{\lambda_2}{\lambda_1}\right) - \frac{1}{2}\left(1 - \frac{\lambda_2}{\lambda_1}\right)\sin 2z & \frac{1}{2}\left(1 - \frac{\lambda_2}{\lambda_1}\right)\cos 2z & 0 \\ \frac{1}{2}\left(1 - \frac{\lambda_2}{\lambda_1}\right)\cos 2z & \frac{1}{2}\left(1 + \frac{\lambda_2}{\lambda_1}\right) + \frac{1}{2}\left(1 - \frac{\lambda_2}{\lambda_1}\right)\sin 2z & 0 \\ 0 & 0 & \frac{1}{\lambda_1\lambda_2} \end{pmatrix}.
\]
\end{rema}

\paragraph{The $\mathfrak{iso}(1,1)$ case.}
We realize the simply connected Lie group $G = \ISO_0(1,1)$ with Lie algebra $\mathfrak{iso}(1,1)$ as $\R^3$ with the group multiplication
\[(x,y,z)\cdot(x',y',z') = (x + e^{-z}x',y + e^zy',z + z').\]
Note that the inverse of $(x,y,z)$ is given by $(-e^zx,-e^{-z}y,-z)$.
Let $X,Y,Z$ denote the left-invariant vector fields corresponding to the coordinate vectors $\partial_x,\partial_y,\partial_z$ at the identity.
Explicitly,
\[X = e^{-z}\frac{\partial}{\partial x},\qquad Y = e^z\frac{\partial}{\partial y},\qquad Z = \frac{\partial}{\partial z}.\]
These vector fields satisfy
\[[X,Y] = 0,\qquad [Y,Z] = -Y,\qquad [Z,X] = -X.\]
Let $\lambda_1 > 0$ and $\lambda_2 < 0$ with $\lambda_1 + \lambda_2 \neq 0$, and set
\[e_1 = \frac{1}{\sqrt{2}}(X+Y),\qquad e_2 = \sqrt{-\frac{\lambda_1}{2\lambda_2}}(X-Y),\qquad e_3 = \sqrt{-\lambda_1\lambda_2}Z.\]
Then $(e_1,e_2,e_3)$ is a left-invariant frame on $G$ satisfying
\[[e_2,e_3] = \lambda_1e_1,\qquad [e_3,e_1] = \lambda_2e_2,\qquad [e_1,e_2] = 0.\]
We equip $G$ with the left-invariant metric and orientation for which $(e_1,e_2,e_3)$ is a positively oriented orthonormal frame.
Then $(e_1,e_2,e_3)$ is a Milnor frame with Milnor constants $(\lambda_1,\lambda_2,0)$.
Explicitly,
\[e_1 = \frac{1}{\sqrt{2}}\left(e^{-z}\frac{\partial}{\partial x}+e^z\frac{\partial}{\partial y}\right),\qquad e_2 = \sqrt{-\frac{\lambda_1}{2\lambda_2}}\left(e^{-z}\frac{\partial}{\partial x}-e^z\frac{\partial}{\partial y}\right),\qquad e_3 = \sqrt{-\lambda_1\lambda_2}\frac{\partial}{\partial z}.\]
For $\mu = -\frac{1}{2(\lambda_1 + \lambda_2)}$, we have
\[\Lambda^\mu = \frac{\lambda_1\lambda_2}{\lambda_1 + \lambda_2}\begin{pmatrix} 0 & 0 & 0 \\ 0 & 0 & 0 \\ 0 & 0 & 1 \end{pmatrix}\]
with respect to the Milnor frame.
Hence, the Ricci Killing spinor equation is equivalent to
\begin{equation}\label{eq:e11_solveq}
  e_1(\psi) = 0,\qquad e_2(\psi) = 0,\qquad e_3(\psi) + \frac{\lambda_1\lambda_2}{\lambda_1 + \lambda_2}e_3\cdot\psi = 0.
\end{equation}
The first two equations imply that $\partial_x\psi = \partial_y\psi = 0$, and hence $\psi$ depends only on $z$.
Using the same Clifford representation as in the $\iso(2)$ case and writing $\psi(z) = \begin{pmatrix} \psi_1(z) & \psi_2(z) \end{pmatrix}^{\top}$,
we can rewrite the third equation in \eqref{eq:e11_solveq} as
\[\sqrt{-\lambda_1\lambda_2}\frac{d\psi_1}{dz}(z) + \frac{\lambda_1\lambda_2}{\lambda_1 + \lambda_2}i\psi_1(z) = 0,\qquad \sqrt{-\lambda_1\lambda_2}\,\frac{d\psi_2}{dz}(z) - \frac{\lambda_1\lambda_2}{\lambda_1 + \lambda_2}i\psi_2(z) = 0.\]
Therefore, the Ricci Killing spinors are given by
\[\psi(x,y,z) = \begin{pmatrix} C_1\psi_1(z) \\ C_2\psi_2(z) \end{pmatrix},\]
\[\psi_1(z) = \exp\left(\frac{\sqrt{-\lambda_1\lambda_2}}{\lambda_1 + \lambda_2}iz\right), \quad \psi_2(z) = \exp\left(-\frac{\sqrt{-\lambda_1\lambda_2}}{\lambda_1 + \lambda_2}iz\right).\]
Here, $(C_1,C_2) \in \C^2 \setminus \{(0,0)\}$.

\begin{rema}
The left-invariant metric considered above is defined by requiring the Milnor frame $(e_1,e_2,e_3)$ to be orthonormal.
With respect to the coordinate frame $(\partial_x,\partial_y,\partial_z)$ on $\R^3$, its matrix is
\[
g = \begin{pmatrix} \frac{1}{2}\left(1 - \frac{\lambda_2}{\lambda_1}\right)e^{2z} & \frac{1}{2}\left(1 + \frac{\lambda_2}{\lambda_1}\right) & 0 \\ \frac{1}{2}\left(1 + \frac{\lambda_2}{\lambda_1}\right) & \frac{1}{2}\left(1 - \frac{\lambda_2}{\lambda_1}\right)e^{-2z} & 0 \\ 0 & 0 & -\frac{1}{\lambda_1\lambda_2} \end{pmatrix}.
\]
\end{rema}

\section{Ambient metrics of Ricci Killing spinors}\label{sec:Ambient_metrics_of_RKS}
We briefly recall the ambient construction for generalized Killing spinors.
Let $(M,g)$ be a real-analytic Riemannian spin manifold carrying a generalized Killing spinor $\psi$ with a real-analytic associated symmetric endomorphism $A$.
Then there exists a unique metric $g^{\mathcal{Z}}$ of the form 
\[g^\mathcal{Z} = dt^2 + g_t,\qquad g_0 = g,\]
on a sufficiently small neighborhood $\mathcal{Z}$ of $\{0\} \times M$ inside $\R \times M$ such that $(\mathcal{Z}, g^\mathcal{Z})$, endowed with the spin structure induced from $M$, carries a parallel spinor whose restriction to $\{0\} \times M$ is $\psi$ \cite{AMM13} (see also \cite{BHMMM}*{Theorem 8.43}).
For real Killing spinors, this construction recovers B\"ar's cone construction \cite{Bar93}.
For the left-invariant metrics considered in Section~5, both $g$ and $A = \mu\Ric$ are real analytic, so the above construction applies.
In this section, we apply this construction to the non-Killing Ricci Killing spinors obtained in Section~\ref{sec:RKS_on_3dim_Lie_groups}, namely, the invariant ones in the $\heis_3$ case and the non-invariant ones in the $\mathfrak{iso}(2)$ and $\mathfrak{iso}(1,1)$ cases.

Let $W_t$ denote the Weingarten tensor of the hypersurface $M_t = \{t\}\times M$, defined by
\[W_t(X) = -\nabla^\mathcal{Z}_X\partial_t.\]
The initial condition for the Weingarten tensor is $W_0 = 2A$.
If $\psi$ is a Ricci Killing spinor with constant $\mu$, then $A = \mu\Ric$, and therefore $W_0 = 2\mu\Ric$.
Thus, the constructions below provide explicit $4$-dimensional Ricci-flat spin manifolds $\mathcal{Z}$ carrying parallel spinors and containing $3$-dimensional non-Einstein hypersurfaces $M$ whose Weingarten tensor is proportional to the Ricci tensor of $M$.

\subsection{The evolution equations}
To determine the ambient metric $g^\mathcal{Z}$, we use two equations that it necessarily satisfies.
For any $t$, the hypersurface $M_t$ carries a generalized Killing spinor with associated symmetric endomorphism $A_t = \frac{1}{2}W_t$.
Thus each hypersurface $M_t$ satisfies the integrability condition (Remark~\ref{rema:curv_to_GKS_dim3}) of generalized Killing spinors
\begin{equation}
  \frac{1}{2}R^t(X\wedge Y) = \ast_t(d^{\nabla^t}A_t)(X,Y) - 2A_t(X)\wedge A_t(Y), \quad (\forall X,Y \in TM_t \cong TM)\label{eq:ambient_solv_1}
\end{equation}
where $R^t$ is the curvature of $g_t$, $\ast_t$ is the Hodge star operator of $g_t$, and $d^{\nabla^t}$ is the exterior covariant derivative with respect to the Levi-Civita connection $\nabla^t$ of $g_t$.
Furthermore, the ambient manifold $(\mathcal{Z},g^\mathcal{Z})$ is Ricci-flat, 
\begin{equation}
  \Ric^\mathcal{Z} = 0. \label{eq:ambient_solv_2}
\end{equation}
We will solve the system of equations \eqref{eq:ambient_solv_1} and \eqref{eq:ambient_solv_2} for the three cases: $\heis_3$, $\iso(2)$, and $\iso(1,1)$.

Let $G$ be a connected three-dimensional unimodular Lie group equipped with an orientation and a left-invariant Riemannian metric $g$.
Let $(e_1,e_2,e_3)$ be a Milnor frame, so that
\[[e_2,e_3] = \lambda_1e_1,\qquad [e_3,e_1] = \lambda_2e_2,\qquad [e_1,e_2] = \lambda_3e_3,\]
see Section~\ref{sec:invariant_RKS_on_3dim_Lie_groups}.
Let $(\theta^1,\theta^2,\theta^3)$ denote the dual coframe.
We look for the ambient metric on $\mathcal{Z} = I \times G$, where $I$ is an interval containing $0$, of the form
\[g^\mathcal{Z} = dt^2 + g_t,\qquad g_t = u_1(t)^2(\theta^1)^2 + u_2(t)^2(\theta^2)^2 + u_3(t)^2(\theta^3)^2,\qquad u_i(0) = 1,\]
where the functions $u_i$ are positive on $I$.
For each $t\in I$, we set
\[e_1^t = \frac{1}{u_1(t)}e_1,\qquad e_2^t = \frac{1}{u_2(t)}e_2,\qquad e_3^t = \frac{1}{u_3(t)}e_3.\]
Then $(e_1^t,e_2^t,e_3^t)$ is a Milnor frame for $g_t$, with Milnor constants
\[\lambda_1(t) = \frac{u_1(t)}{u_2(t)u_3(t)}\lambda_1,\qquad \lambda_2(t) = \frac{u_2(t)}{u_3(t)u_1(t)}\lambda_2,\qquad \lambda_3(t) = \frac{u_3(t)}{u_1(t)u_2(t)}\lambda_3.\]
For simplicity, we put
\[a(t) = -\lambda_1(t) + \lambda_2(t) + \lambda_3(t),\qquad b(t) = \lambda_1(t) - \lambda_2(t) + \lambda_3(t),\qquad c(t) = \lambda_1(t) + \lambda_2(t) - \lambda_3(t).\]
Let $\omega^t$ denote the connection form of the Levi-Civita connection of $g_t$.
Then
\begin{equation}\label{eq:conn_form_of_Gt}
  \omega^t(X) = \frac{1}{2}X_1(t)a(t)e_2^t\wedge e_3^t + \frac{1}{2}X_2(t)b(t)e_3^t\wedge e_1^t + \frac{1}{2}X_3(t)c(t)e_1^t\wedge e_2^t,
\end{equation}
where $X = X_1(t)e_1^t + X_2(t)e_2^t + X_3(t)e_3^t$.
Note that $X \in TG_t \cong TG$ is independent of $t$, but the coefficients $X_i(t)$ depend on $t$ since $(e_1^t,e_2^t,e_3^t)$ depends on $t$.
Here, we identify $\bw^2 TG$ with $\so(TG)$ by
\[(Y\wedge Z)(V)=g_t(Y,V)Z-g_t(Z,V)Y \qquad (\forall X, Y, V \in TG).\]
The Ricci tensor with respect to $(e_1^t, e_2^t, e_3^t)$ is
\[\Ric^t = \frac{1}{2}\begin{pmatrix}
  b(t)c(t) & 0 & 0 \\
  0 & c(t)a(t) & 0 \\
  0 & 0 & a(t)b(t)
\end{pmatrix}.\]

Since the restriction of the parallel spinor on $\mathcal{Z}$ to $G_t = \{t\}\times G$ is a generalized Killing spinor with associated symmetric endomorphism $A_t = \frac{1}{2}W_t$, we obtain from \cite{BGM05}*{Proposition~4.1}, for any $X,Y \in TG_t \cong TG$,
\begin{equation}\label{eq:variation_of_gt}
  \frac{\partial}{\partial t}g_t(X,Y) = -2g_t(W_t(X),Y) = -4g_t(A_t(X),Y).
\end{equation}
Hence the endomorphism $A_t$ is diagonal with respect to $(e_1, e_2, e_3)$.
Writing $A_t(e_i) = h_i(t)e_i$, we obtain
\begin{equation}\label{eq:relation_h_and_u}
  h_i(t) = -\frac{u_i'(t)}{2u_i(t)} = -\frac{1}{2}\frac{d}{dt}\log u_i(t).
\end{equation}

In dimension three, the curvature operator is determined by the Ricci tensor, and the above formula for $\Ric^t$ gives
\begin{align*}
  R^t(e_2\wedge e_3) &= \frac{1}{4}\bigl(b(t)c(t) - c(t)a(t) - a(t)b(t)\bigr)e_2\wedge e_3,\\
  R^t(e_3\wedge e_1) &= \frac{1}{4}\bigl(-b(t)c(t) + c(t)a(t) - a(t)b(t)\bigr)e_3\wedge e_1,\\
  R^t(e_1\wedge e_2) &= \frac{1}{4}\bigl(-b(t)c(t) - c(t)a(t) + a(t)b(t)\bigr)e_1\wedge e_2.
\end{align*}

Next, we compute the exterior covariant derivative of $A_t$.
By \eqref{eq:conn_form_of_Gt}, we have
\begin{align*}
  \nabla^t_{e_1}e_1 &= 0, & \nabla^t_{e_1}e_2 &= \frac{u_1(t)u_2(t)}{2 u_3(t)}a(t)e_3, & \nabla^t_{e_1}e_3 &= -\frac{u_1(t)u_3(t)}{2 u_2(t)}a(t)e_2,\\
  \nabla^t_{e_2}e_1 &= -\frac{u_1(t)u_2(t)}{2 u_3(t)}b(t)e_3, & \nabla^t_{e_2}e_2 &= 0, & \nabla^t_{e_2}e_3 &= \frac{u_2(t)u_3(t)}{2 u_1(t)}b(t)e_1,\\
  \nabla^t_{e_3}e_1 &= \frac{u_1(t)u_3(t)}{2 u_2(t)}c(t)e_2, & \nabla^t_{e_3}e_2 &= -\frac{u_2(t)u_3(t)}{2 u_1(t)}c(t)e_1, & \nabla^t_{e_3}e_3 &= 0.
\end{align*}
Hence, we obtain
\begin{align*}
  d^{\nabla^t}A_t(e_2,e_3) &= \frac{u_2(t)u_3(t)}{2u_1(t)}\bigl(b(t)(h_3(t)-h_1(t))+c(t)(h_2(t)-h_1(t))\bigr)e_1,\\
  d^{\nabla^t}A_t(e_3,e_1) &= \frac{u_3(t)u_1(t)}{2u_2(t)} \bigl(c(t)(h_1(t)-h_2(t))+a(t)(h_3(t)-h_2(t))\bigr)e_2,\\
  d^{\nabla^t}A_t(e_1,e_2) &= \frac{u_1(t)u_2(t)}{2u_3(t)} \bigl(a(t)(h_2(t)-h_3(t))+b(t)(h_1(t)-h_3(t))\bigr)e_3.
\end{align*}
Since
\[\ast_te_1 = \frac{u_1(t)}{u_2(t)u_3(t)}e_2\wedge e_3,\qquad \ast_te_2 = \frac{u_2(t)}{u_3(t)u_1(t)}e_3\wedge e_1,\qquad \ast_te_3 = \frac{u_3(t)}{u_1(t)u_2(t)}e_1\wedge e_2,\]
we have
\begin{align*}
  \ast_t(d^{\nabla^t}A_t)(e_2,e_3) &= \frac{1}{2}\bigl(b(t)(h_3(t)-h_1(t))+c(t)(h_2(t)-h_1(t))\bigr)e_2\wedge e_3,\\
  \ast_t(d^{\nabla^t}A_t)(e_3,e_1) &= \frac{1}{2}\bigl(c(t)(h_1(t)-h_2(t))+a(t)(h_3(t)-h_2(t))\bigr)e_3\wedge e_1,\\
  \ast_t(d^{\nabla^t}A_t)(e_1,e_2) &= \frac{1}{2}\bigl(a(t)(h_2(t)-h_3(t))+b(t)(h_1(t)-h_3(t))\bigr)e_1\wedge e_2.
\end{align*}
Moreover,
\begin{align*}
  A_t(e_2)\wedge A_t(e_3) &= h_2(t)h_3(t)e_2\wedge e_3,\\
  A_t(e_3)\wedge A_t(e_1) &= h_3(t)h_1(t)e_3\wedge e_1,\\
  A_t(e_1)\wedge A_t(e_2) &= h_1(t)h_2(t)e_1\wedge e_2.
\end{align*}
Therefore, the integrability condition \eqref{eq:ambient_solv_1} is equivalent to
\begin{align}
  \frac{1}{8}\bigl(b(t)c(t)-c(t)a(t)-a(t)b(t)\bigr)
  &= \frac{1}{2}\bigl(b(t)(h_3(t)-h_1(t))+c(t)(h_2(t)-h_1(t))\bigr)-2h_2(t)h_3(t), \label{eq:ambient_int_1}\\
  \frac{1}{8}\bigl(-b(t)c(t)+c(t)a(t)-a(t)b(t)\bigr)
  &= \frac{1}{2}\bigl(c(t)(h_1(t)-h_2(t))+a(t)(h_3(t)-h_2(t))\bigr)-2h_3(t)h_1(t), \label{eq:ambient_int_2}\\
  \frac{1}{8}\bigl(-b(t)c(t)-c(t)a(t)+a(t)b(t)\bigr)
  &= \frac{1}{2}\bigl(a(t)(h_2(t)-h_3(t))+b(t)(h_1(t)-h_3(t))\bigr)-2h_1(t)h_2(t). \label{eq:ambient_int_3}
\end{align}

Next, we compute the Ricci tensor of $g^\mathcal{Z}$.
It suffices to solve $\Ric^\mathcal{Z}(X,Y)=0$ for all $X,Y \in TG$ (see \cite{AMM13}).
By \cite{BGM05}*{Proposition~4.1}, for any tangent vectors $X,Y\in TG$, we have
\begin{equation}\label{eq:ambient_Ric_tangent}
  \Ric^\mathcal{Z}(X,Y) = \Ric^t(X,Y) + 8g_t(A_t(X),A_t(Y)) - 4\tr(A_t)g_t(A_t(X),Y) + 2\frac{\partial}{\partial t}g_t(A_t(X),Y).
\end{equation}
Since both $\Ric^t$ and $A_t$ are diagonal with respect to $(e_1,e_2,e_3)$, the equations $\Ric^\mathcal{Z}(X,Y)=0$ are equivalent to
\begin{align}
  h_1'(t) &= 2H(t)h_1(t)-\frac{1}{4}b(t)c(t), \label{eq:ambient_evol_1}\\
  h_2'(t) &= 2H(t)h_2(t)-\frac{1}{4}c(t)a(t), \label{eq:ambient_evol_2}\\
  h_3'(t) &= 2H(t)h_3(t)-\frac{1}{4}a(t)b(t), \label{eq:ambient_evol_3}
\end{align}
where $H(t) = \tr(A_t) = h_1(t)+h_2(t)+h_3(t)$.

\subsection{The \texorpdfstring{$\heis_3$}{heis3} case}
In this case, the Milnor constants are $\lambda_1 \neq 0, \lambda_2 = \lambda_3 = 0$.
Thus, we have
\[\lambda_1(t)=\frac{u_1(t)}{u_2(t)u_3(t)}\lambda_1,\qquad \lambda_2(t)=\lambda_3(t)=0,\]
and
\[a(t)=-\lambda_1(t),\qquad b(t)=c(t)=\lambda_1(t).\]
By Section~5.1, the associated endomorphism of an invariant Ricci Killing spinor is
\[A_0 = \frac{1}{4}\begin{pmatrix}
  -\lambda_1 & 0 & 0 \\
  0 & \lambda_1 & 0 \\
  0 & 0 & \lambda_1
\end{pmatrix},\]
and hence
\[h_1(0) = -\frac{\lambda_1}{4},\qquad h_2(0) = h_3(0) = \frac{\lambda_1}{4}.\]
Substituting these relations into \eqref{eq:ambient_int_1}--\eqref{eq:ambient_int_3}, we obtain
\begin{align}
  \frac{3}{8}\lambda_1(t)^2 &= \lambda_1(t)\left(\frac{h_2(t)+h_3(t)}{2}-h_1(t)\right)-2h_2(t)h_3(t), \label{eq:heis_int_1}\\
  -\frac{1}{8}\lambda_1(t)^2 &= \frac{1}{2}\lambda_1(t)\bigl(h_1(t)-h_3(t)\bigr)-2h_3(t)h_1(t), \label{eq:heis_int_2} \\
  -\frac{1}{8}\lambda_1(t)^2 &= \frac{1}{2}\lambda_1(t)\bigl(h_1(t)-h_2(t)\bigr)-2h_1(t)h_2(t). \label{eq:heis_int_3}
\end{align}
Similarly, \eqref{eq:ambient_evol_1}--\eqref{eq:ambient_evol_3} read
\begin{align}
  h_1'(t) &= 2H(t)h_1(t)-\frac{1}{4}\lambda_1(t)^2, \label{eq:heis_evol_1}\\
  h_2'(t) &= 2H(t)h_2(t)+\frac{1}{4}\lambda_1(t)^2, \label{eq:heis_evol_2} \\
  h_3'(t) &= 2H(t)h_3(t)+\frac{1}{4}\lambda_1(t)^2. \label{eq:heis_evol_3}
\end{align}

Since $h_2(t)$ and $h_3(t)$ satisfy the same differential equation with the same initial value, uniqueness implies that $h_2(t) = h_3(t)$, and thus \eqref{eq:heis_int_2} reads
\[\bigl(\lambda_1(t) + 4h_1(t)\bigr)\bigl(\lambda_1(t) - 4h_2(t)\bigr)=0.\]
Hence, for each $t \in I$, we have $\lambda_1(t) = 4h_2(t)$ or $\lambda_1(t) = -4h_1(t)$.
If $\lambda_1(t) = -4h_1(t)$, then \eqref{eq:heis_int_1} gives $(h_1(t) + h_2(t))^2 = 0$, which implies $h_1(t) = -h_2(t)$.
Similarly, if $\lambda_1(t)=4h_2(t)$, then \eqref{eq:heis_int_1} gives $h_2(t)\bigl(h_1(t) + h_2(t)\bigr) = 0$.
Since $\lambda_1(t)\neq0$, we have $h_2(t)\neq0$, and hence $h_1(t)=-h_2(t)$.
Thus, we have $\lambda_1(t) = -4h_1(t) = 4h_2(t)$ for all $t \in I$.

Combining $h_1(t) = -h_2(t) = -h_3(t)$ with the relation \eqref{eq:relation_h_and_u} and the initial condition $u_i(0)=1$, we obtain $u_1(t)u_2(t) = u_1(t)u_3(t) = 1$.
Therefore, since $\lambda_1(t)=\lambda_1 u_1(t)^3$, $\lambda_1(t) = -4h_1(t)$ reads
$2 u_1'(t) = \lambda_1 u_1(t)^4$.
Equivalently,
\[\frac{d}{dt}\bigl(u_1(t)^{-3}\bigr)=-\frac{3\lambda_1}{2}.\]
Using $u_1(0)=1$, we obtain
\[u_1(t)=\left(1-\frac{3\lambda_1}{2}t\right)^{-1/3},\qquad u_2(t)=u_3(t)=\left(1-\frac{3\lambda_1}{2}t\right)^{1/3}.\]
Thus, the ambient metric is
\[g^\mathcal{Z}=dt^2+\left(1-\frac{3\lambda_1}{2}t\right)^{-2/3}(\theta^1)^2+\left(1-\frac{3\lambda_1}{2}t\right)^{2/3}\bigl((\theta^2)^2+(\theta^3)^2\bigr),\]
on an interval $I$ containing $0$ such that $1-\frac{3\lambda_1}{2}t>0$.
A direct computation shows that the above solution also satisfies \eqref{eq:heis_evol_1}--\eqref{eq:heis_evol_3}.

\subsection{The \texorpdfstring{$\iso(2)$}{iso(2)} case}
In this case, the Milnor constants satisfy $\lambda_1,\lambda_2>0$, $\lambda_1\neq\lambda_2$, and $\lambda_3=0$.
Thus, we have
\[\lambda_1(t)=\frac{u_1(t)}{u_2(t)u_3(t)}\lambda_1,\qquad \lambda_2(t)=\frac{u_2(t)}{u_3(t)u_1(t)}\lambda_2,\qquad \lambda_3(t)=0,\]
and
\[a(t)=-\lambda_1(t)+\lambda_2(t),\qquad b(t)=-a(t),\qquad c(t)=\lambda_1(t)+\lambda_2(t).\]
By Section~\ref{sec:non-invariant_RKS_on_3dim_Lie_groups}, the associated endomorphism of a non-invariant Ricci Killing spinor is
\[A_0 = -\frac{1}{4}\begin{pmatrix}
  \lambda_1-\lambda_2 & 0 & 0 \\
  0 & \lambda_2-\lambda_1 & 0 \\
  0 & 0 & -\frac{(\lambda_1-\lambda_2)^2}{\lambda_1+\lambda_2}
\end{pmatrix},\]
and hence
\begin{equation}\label{eq:iso2_initial_h}
  h_1(0)=-\frac{\lambda_1-\lambda_2}{4},\qquad h_2(0)=\frac{\lambda_1-\lambda_2}{4},\qquad h_3(0)=\frac{(\lambda_1-\lambda_2)^2}{4(\lambda_1+\lambda_2)}.
\end{equation}
Substituting these relations into \eqref{eq:ambient_int_1}--\eqref{eq:ambient_int_3}, we obtain
\begin{align}
  \frac{1}{8}\bigl(a(t)^2-2a(t)c(t)\bigr) &= \frac{1}{2}\bigl(-a(t)(h_3(t)-h_1(t))+c(t)(h_2(t)-h_1(t))\bigr)-2h_2(t)h_3(t), \label{eq:iso2_int_1}\\
  \frac{1}{8}\bigl(a(t)^2+2a(t)c(t)\bigr) &= \frac{1}{2}\bigl(c(t)(h_1(t)-h_2(t))+a(t)(h_3(t)-h_2(t))\bigr)-2h_3(t)h_1(t), \label{eq:iso2_int_2}\\
  -\frac{1}{8}a(t)^2 &= \frac{1}{2}a(t)(h_2(t)-h_1(t))-2h_1(t)h_2(t). \label{eq:iso2_int_3}
\end{align}
Similarly, \eqref{eq:ambient_evol_1}--\eqref{eq:ambient_evol_3} read
\begin{align}
  h_1'(t) &= 2H(t)h_1(t)+\frac{1}{4}a(t)c(t), \label{eq:iso2_evol_1}\\
  h_2'(t) &= 2H(t)h_2(t)-\frac{1}{4}a(t)c(t), \label{eq:iso2_evol_2}\\
  h_3'(t) &= 2H(t)h_3(t)+\frac{1}{4}a(t)^2. \label{eq:iso2_evol_3}
\end{align}

Adding \eqref{eq:iso2_evol_1} and \eqref{eq:iso2_evol_2}, we obtain
\[\frac{d}{dt}\bigl(h_1(t)+h_2(t)\bigr)=2H(t)\bigl(h_1(t)+h_2(t)\bigr).\]
Since $h_1(0)+h_2(0)=0$, uniqueness implies that $h_1(t)+h_2(t)=0$.
Then \eqref{eq:iso2_int_3} reads $\bigl(a(t)-4h_1(t)\bigr)^2=0$, and hence $a(t)=4h_1(t)=-4h_2(t)$.
Combining $h_1(t)+h_2(t)=0$ with the relation \eqref{eq:relation_h_and_u} and the initial condition $u_i(0)=1$, we obtain $u_1(t)u_2(t) = 1$.

We set $f(t)=u_1(t)^2$.
Then $u_2(t)^2=f(t)^{-1}$, and hence
\[a(t)=\frac{-\lambda_1f(t)+\lambda_2f(t)^{-1}}{u_3(t)}.\]
Since $a(t)=4h_1(t)$ and $h_1(t)=-\frac{u_1'(t)}{2u_1(t)}$, we obtain
\begin{equation}\label{eq:diff_of_f}
  f'(t)= -a(t)f(t) = \frac{\lambda_1f(t)^2-\lambda_2}{u_3(t)},\qquad f(0)=1.
\end{equation}

We set $q=\sqrt{\lambda_1\lambda_2}$ and introduce a new parameter $s$ by
\[\frac{ds}{dt}=-\frac{q}{u_3(t)}.\]
Since $q>0$ and $u_3(t)>0$, the function $s(t)$ is strictly decreasing and admits an inverse $t=t(s)$.
We write $U_i(s)=u_i(t(s))$ and $F(s)=f(t(s))=U_1(s)^2$.
Then
\[\frac{dF}{ds}=-\frac{\lambda_1F(s)^2-\lambda_2}{q}.\]
To solve this equation explicitly, we need to determine $s(0)$.
For simplicity, we choose $s_0=s(0)>0$ so that
\[\tanh(s_0) = \left\{\begin{aligned}
    &\sqrt{\dfrac{\lambda_1}{\lambda_2}}, & \lambda_1<\lambda_2,\\
    &\sqrt{\dfrac{\lambda_2}{\lambda_1}}, & \lambda_1>\lambda_2.
  \end{aligned}\right.\]
Then
\[F(s) = \left\{\begin{aligned}
  &\sqrt{\dfrac{\lambda_2}{\lambda_1}}\tanh(s), & \lambda_1<\lambda_2,\\
  &\sqrt{\dfrac{\lambda_2}{\lambda_1}}(\tanh(s))^{-1}, & \lambda_1>\lambda_2.
\end{aligned}\right.\]
Hence
\[U_1(s)^2 = \left\{\begin{aligned}
&\dfrac{q}{\lambda_1}\tanh(s), & \lambda_1<\lambda_2,\\
&\dfrac{q}{\lambda_1\tanh(s)}, & \lambda_1>\lambda_2,
\end{aligned}\right.
\qquad
U_2(s)^2=
\left\{\begin{aligned}
&\dfrac{\lambda_1}{q\tanh(s)}, & \lambda_1<\lambda_2,\\
&\dfrac{\lambda_1}{q}\tanh(s), & \lambda_1>\lambda_2.
\end{aligned}\right.\]

It remains to determine $U_3(s)$.
In order to do this, we will find a conserved quantity.
Since $h_1(t)+h_2(t)=0$, we have $H(t)=h_3(t)$, and hence \eqref{eq:iso2_evol_3} reads
\[h_3'(t)=2h_3(t)^2+\frac{1}{4}a(t)^2.\]
Using \eqref{eq:relation_h_and_u}, we obtain
\begin{equation}\label{eq:iso2_u3_second}
  u_3''(t)=-\frac{1}{2}a(t)^2u_3(t).
\end{equation}
We set $\rho(t)=\lambda_1f(t)+\lambda_2f(t)^{-1}$.
By \eqref{eq:diff_of_f}, we have $\rho'(t)=a(t)^2u_3(t)$.
Therefore, $\rho'(t) = -2u_3''(t)$, and hence the quantity $\rho(t) + 2u_3'(t)$ is constant.
From the initial condition
\[u_3'(0) = -2 h_3(0) = -\frac{(\lambda_1-\lambda_2)^2}{2(\lambda_1+\lambda_2)},\]
and $\rho(0) = \lambda_1+\lambda_2$, we see that the constant is
\[\rho(t) + 2u_3'(t) = \rho(0) + 2u_3'(0) = \frac{4q^2}{\lambda_1+\lambda_2}.\]
Thus,
\[u_3'(t)=-\frac{1}{2}\left(\lambda_1f(t)+\lambda_2f(t)^{-1}\right)+\frac{2q^2}{\lambda_1+\lambda_2}.\]
Passing to the parameter $s$, we obtain
\[\frac{d}{ds}\log U_3(s)=-\frac{u_3'(t(s))}{q}=\frac{1}{2q}\left(\lambda_1F(s)+\lambda_2F(s)^{-1}\right)-\frac{2q}{\lambda_1+\lambda_2}.\]
In both cases $\lambda_1<\lambda_2$ and $\lambda_1>\lambda_2$, the formula for $F(s)$ gives
\[\frac{1}{q}\left(\lambda_1F(s)+\lambda_2F(s)^{-1}\right)=\tanh(s)+(\tanh(s))^{-1} = \frac{2}{\tanh(2s)}.\]
Hence
\[\frac{d}{ds}\log U_3(s) = \frac{1}{\tanh(2s)} - \frac{2q}{\lambda_1+\lambda_2}.\]
Since $s_0>0$, by shrinking $I$ if necessary, we may assume that $s(I)\subset(0,\infty)$.
Integrating the above equation and using $U_3(s_0)=u_3(0)=1$, we obtain
\[\log U_3(s)=\frac{1}{2}\log\left(\frac{\sinh(2s)}{\sinh(2s_0)}\right)-\frac{2q}{\lambda_1+\lambda_2}(s-s_0).\]
From the definition of $s_0$, we also have
\[\sinh(2s_0)=\frac{2q}{|\lambda_1-\lambda_2|}.\]
Thus,
\[U_3(s)=\left(\frac{|\lambda_1-\lambda_2|}{2q}\sinh(2s)\right)^{\frac{1}{2}}\exp\left(-\frac{2q}{\lambda_1+\lambda_2}(s-s_0)\right).\]
Finally, the ambient metric can be written as
\begin{align*}
  g^\mathcal{Z} &= \frac{U_3(s)^2}{q^2}ds^2+U_1(s)^2(\theta^1)^2+U_2(s)^2(\theta^2)^2+U_3(s)^2(\theta^3)^2\\
  &= \frac{|\lambda_1 - \lambda_2|}{2q^3}\sinh(2s)\exp\left(-\frac{4q}{\lambda_1 + \lambda_2}(s-s_0)\right)(ds^2 + q^2 (\theta^3)^2)\\
  &\hspace{130pt}+ \left\{
    \begin{aligned}
      &\frac{q}{\lambda_1}\tanh(s) (\theta^1)^2 + \frac{\lambda_1}{q \tanh(s)} (\theta^2)^2 & (\lambda_1 < \lambda_2)\\
      &\frac{q}{\lambda_1 \tanh(s)} (\theta^1)^2 + \frac{\lambda_1}{q} \tanh(s) (\theta^2)^2 & (\lambda_1 > \lambda_2)
    \end{aligned}
    \right.
\end{align*}

The above expression defines a Riemannian metric for all $s\in(0,\infty)$, with the initial hypersurface corresponding to $s=s_0$.
We now determine the corresponding range of $t$.
We have
\[t(s) = -\frac{1}{q}\int_{s_0}^s U_3(s')\,ds' = \frac{1}{q}\int_s^{s_0} U_3(s')\,ds'.\]
Since $U_3$ is bounded on the closed interval $[0,s_0]$, $t(0)$ is finite.
On the other hand, for $s\geq 1$,
\[\sinh(2s)\geq\frac{1-e^{-4}}{2}e^{2s},\]
and hence
\[U_3(s)\geq C e^{\alpha s},\qquad \alpha=1-\frac{2q}{\lambda_1+\lambda_2}>0,\]
for some $C>0$, where the last inequality follows from $\lambda_1+\lambda_2>2\sqrt{\lambda_1\lambda_2}=2q$.
Thus $t(s)\to-\infty$ as $s\to\infty$, and consequently the range of $t$ is
\[I=(-\infty,t(0)),\qquad t(0)=\frac{1}{q}\int_0^{s_0}U_3(s)\,ds > 0.\]

\subsection{The \texorpdfstring{$\iso(1,1)$}{iso(1,1)} case}
In this case, the Milnor constants satisfy $\lambda_1 > 0$, $\lambda_2 < 0$, $\lambda_1 + \lambda_2 \neq 0$, and $\lambda_3 = 0$.
As in the $\iso(2)$ case, we have
\[\lambda_1(t) = \frac{u_1(t)}{u_2(t)u_3(t)}\lambda_1,\qquad \lambda_2(t)=\frac{u_2(t)}{u_3(t)u_1(t)}\lambda_2,\qquad \lambda_3(t)=0,\]
and
\[a(t) = -\lambda_1(t) + \lambda_2(t),\qquad b(t) = -a(t),\qquad c(t) = \lambda_1(t) + \lambda_2(t).\]
The initial values are also given by \eqref{eq:iso2_initial_h}.
Therefore, the equations to be solved are exactly the same as in the $\iso(2)$ case, namely \eqref{eq:iso2_int_1}--\eqref{eq:iso2_int_3} and \eqref{eq:iso2_evol_1}--\eqref{eq:iso2_evol_3}.
By the same argument as in the $\iso(2)$ case, we obtain $a(t) = 4h_1(t) = -4h_2(t)$, and $u_1(t)u_2(t) = 1$.

We set $f(t) = u_1(t)^2$.
Then, as in \eqref{eq:diff_of_f},
\[f'(t) = \frac{\lambda_1f(t)^2-\lambda_2}{u_3(t)},\qquad f(0) = 1.\]
As in the $\iso(2)$ case, we set $q = \sqrt{-\lambda_1\lambda_2}>0$ and introduce a new parameter $s$ by
\[\frac{ds}{dt} = \frac{q}{u_3(t)}.\]
Since $q>0$ and $u_3(t)>0$, the function $s(t)$ is strictly increasing and admits an inverse $t=t(s)$.
We write $U_i(s) = u_i(t(s))$ and $F(s)=f(t(s))=U_1(s)^2$.
Then
\[\frac{dF}{ds} = \frac{\lambda_1F(s)^2 - \lambda_2}{q}.\]
We choose $s_0=s(0)$ so that 
\[\tan(s_0) = \frac{\lambda_1}{q} = \sqrt{\frac{\lambda_1}{-\lambda_2}}, \qquad 0 < s_0 < \frac{\pi}{2}.\]
Then we obtain
\[F(s)=\frac{q}{\lambda_1}\tan(s), \qquad U_1(s)^2=\frac{q}{\lambda_1}\tan(s),\qquad U_2(s)^2=\frac{\lambda_1}{q\tan(s)}.\]

It remains to determine $U_3(s)$.
By the same argument as in the $\iso(2)$ case, we obtain
\[u_3'(t)=-\frac{1}{2}\left(\lambda_1f(t)+\lambda_2f(t)^{-1}\right)-\frac{2q^2}{\lambda_1+\lambda_2}.\]
Passing to the parameter $s$, we obtain
\[\frac{d}{ds}\log U_3(s)=\frac{u_3'(t(s))}{q}=-\frac{1}{2q}\left(\lambda_1F(s)+\lambda_2F(s)^{-1}\right)-\frac{2q}{\lambda_1+\lambda_2}.\]
The formula for $F(s)$ gives
\[\frac{1}{q}\left(\lambda_1F(s)+\lambda_2F(s)^{-1}\right)=\tan(s)-(\tan(s))^{-1}=-\frac{2}{\tan(2s)}.\]
Hence
\[\frac{d}{ds}\log U_3(s)=\frac{1}{\tan(2s)}-\frac{2q}{\lambda_1+\lambda_2}.\]

Since $s_0 \in (0,\pi/2)$, by shrinking $I$ if necessary, we may assume that $s(I) \subset (0,\pi/2)$.
Integrating the above equation and using $U_3(s_0)=u_3(0)=1$, we obtain
\[\log U_3(s)=\frac{1}{2}\log\left(\frac{\sin(2s)}{\sin(2s_0)}\right)-\frac{2q}{\lambda_1+\lambda_2}(s-s_0).\]
From the definition of $s_0$, we have
\[\sin(2s_0)=2q/(\lambda_1-\lambda_2).\]
Thus,
\[U_3(s)=\left(\frac{\lambda_1-\lambda_2}{2q}\sin(2s)\right)^{\frac{1}{2}}\exp\left(-\frac{2q}{\lambda_1+\lambda_2}(s-s_0)\right).\]
Finally, the ambient metric can be written as
\begin{align*}
  g^\mathcal{Z}
  =&\frac{\lambda_1-\lambda_2}{2q^3}\sin(2s)\exp\left(-\frac{4q}{\lambda_1+\lambda_2}(s-s_0)\right)(ds^2+q^2(\theta^3)^2)\\
  &\hspace{170pt}+\frac{q}{\lambda_1}\tan(s)(\theta^1)^2+\frac{\lambda_1}{q\tan(s)}(\theta^2)^2. 
\end{align*}
The above expression defines a Riemannian metric for all $s\in(0,\pi/2)$, with the initial hypersurface corresponding to $s=s_0$.
We now determine the corresponding range of $t$.
We have
\[t(s)=\frac{1}{q}\int_{s_0}^s U_3(s')\,ds'.\]
Since $U_3$ is bounded on the closed interval $[0,\pi/2]$, both $t(0)$ and $t(\pi/2)$ are finite.
Consequently, the range of $t$ is
\[I=\left(t(0),t\left(\frac{\pi}{2}\right)\right),\qquad t(0)=-\frac{1}{q}\int_0^{s_0}U_3(s)\,ds<0,\qquad t\left(\frac{\pi}{2}\right)=\frac{1}{q}\int_{s_0}^{\pi/2}U_3(s)\,ds>0.\]

\begin{rema}
The ambient metrics obtained above in the $\mathfrak{heis}_3$, $\mathfrak{iso}(2)$, and $\mathfrak{iso}(1,1)$ cases can be identified with special cases of the Bianchi II rank-zero, Bianchi $\mathrm{VII}_0$ rank-one, and Bianchi $\mathrm{VI}_0$ rank-one gravitational instantons, respectively, classified in \cite{BEPS10}. Moreover, the Kretschmann scalars computed in \cite{BEPS10}*{(80), (88), and (86)} show that all three ambient metrics are non-flat.
\end{rema}

\section*{Acknowledgements}
The author is deeply grateful to his thesis advisor, Professor Yasushi Homma, for invaluable guidance, constant encouragement, and helpful discussions throughout this work.
The author also thanks Professor Uwe Semmelmann for valuable comments and suggestions.

\bibliography{refs}

\vspace{50pt}

\textsc{Natsuki Imada, Department of Pure and Applied Mathematics, Graduate School of Fundamental Science and Engineering, Waseda University, 3-4-1 Ohkubo, Shinjuku-ku, Tokyo, 169-8555, Japan.} \\
\vspace{-10pt}

\textit{E-mail address}: \texttt{natsuki.imada@akane.waseda.jp}

\end{document}